\DeclareMathOperator{\direc}{lin} 
\newcommand{\Cplx}{\mathcal C}
\newcommand{\cplxD}{\mathcal D}
\newcommand{\TiltCplx}{\mathcal S}
\newcommand{\TTcplx}{\mathcal T}
\newcommand{\Pfaces}{\mathfrak F}
\DeclareMathOperator{\del}{dl}
\renewcommand{\paragraph}[1]{\par\vspace{1ex}\noindent #1}
\newcommand{\todo}[1]{}
\theoremstyle{plain}
\newtheorem{theorem}{Theorem}
\newtheorem*{proposition*}{Proposition}
\newtheorem*{corollary*}{Corollary}
\newtheorem{lemma}[theorem]{Lemma}
\newtheorem*{theorem*}{Theorem}
\newtheorem*{lemma*}{Lemma}
\newtheorem*{conjecture*}{Conjecture}
\newtheorem*{question*}{Question}
\theoremstyle{definition}
\newtheorem*{exercise*}{Exercise}
\theoremstyle{remark}
\newtheorem{remark}[theorem]{Remark}
\newtheorem*{remark*}{Remark}
\newtheorem{remsTh}[theorem]{Remarks}
\newtheorem{claim}[theorem]{Claim}
\newcommand{\subclass}[1]{}
\newcommand{\enumTi}[1]{\renewcommand{\theenumi}{#1}}
\newcommand{\alphenumi}{\enumTi{\alph{enumi}}}
\newcommand{\Alphenumi}{\enumTi{\Alph{enumi}}}
\newcommand{\romenumi}{\enumTi{\roman{enumi}}}
\newcommand{\itromenumi}{\enumTi{\textit{\roman{enumi}}}}
\newenvironment{aenumeratei}{%
  \begin{enumerate}\alphenumi}{%
  \end{enumerate}}
\renewcommand{\em}{\sl}
\DeclareMathOperator{\id}{id}
\DeclareMathOperator{\relint}{relint}
\DeclareMathOperator{\img}{im}
\DeclareMathOperator{\aff}{aff}
\DeclareMathOperator{\conv}{conv}
\newcommand{\orth}{\bot}
\newcommand{\One}{\mathbf{1}}
\newcommand{\place}{{\text{\footnotesize$\oblong$}}}
\newcommand{\lt}{\left}
\newcommand{\rt}{\right}
\newcommand{\Tp}{{\mspace{-1mu}\scriptscriptstyle\top\mspace{-1mu}}}
\newcommand{\polar}{\vartriangle}
\newcommand{\poOps}{\Diamond}
\newcommand{\restr}[2]{\lt.{#1}\vrule{}_{#2}\rt.}
\newcommand{\sstack}[1]{{\substack{#1}}}
\newcommand{\abs}[1]{{\lt\lvert{#1}\rt\rvert}}
\newcommand{\sabs}[1]{{\lvert{#1}\rvert}}
\newcommand{\close}[1]{\overline{#1}}
\DeclareMathOperator{\dl}{dl}
\newcommand{\nfrac}[2]{{\nicefrac{#1}{#2}}}
\newcommand{\RR}{\mathbb{R}}
\newcommand{\ZZ}{\mathbb{Z}}
\newcommand{\eps}{\varepsilon}
\newcommand{\iprod}{\bullet}
\newlength{\algotabbingwidth}
\numberwithin{theorem}{section}
\renewcommand{\iprod}{\mathop{\cdot}}
\newcommand{\Vset}[1]{V_{#1}}
\newcommand{\Vsetn}{{\Vset{n}}}
\newcommand{\Eset}[1]{{E_{#1}}}
\newcommand{\Esetn}{\Eset{n}}
\newcommand{\Stsp}[1]{S_{#1}}
\newcommand{\Stspn}{\Stsp{n}}
\newcommand{\Gtsp}[1]{P_{#1}}
\newcommand{\Gtspn}{\Gtsp{n}}
\newcommand{\Lsp}{L}
\begin{document}

\title[On the facial structure of STSP and GTSP]{On the facial structure of Symmetric and Graphical Traveling Salesman Polyhedra}%
\author{Dirk Oliver Theis}%
\address{Dirk Oliver Theis, %
  Service de G\'eom\'etrie Combinatoire et Th\'eorie des Groupes (CP 216), %
  D\'epartement de Math\'ematique, %
  Universit\'e Libre de Bruxelles, %
  Bd du Triomphe,
  1050 Brussels, Belgium}%
\email{Dirk.Theis@ulb.ac.be}%
\thanks{Research supported by \textit{Deutsche Forschungsgemeinschaft (DFG)} as project RE~776/9-1.  Author supported by \textit{Communaut\'e fran\c caise de
    Belgique -- Actions de Recherche Concert\'ees.}}
\subjclass[2000]{52B12}
\keywords{Projection, rotation, Symmetric Traveling Salesman Polyhedron, Graphical Traveling Salesman Polyhedron}

\date{Thu Apr~9 21:27:07~EDT 2009}
%


\begin{abstract}
  The Symmetric Traveling Salesman Polytope $S_n$ for a fixed number $n$ of cities is a face of the corresponding Graphical Traveling Salesman Polyhedron $P_n$.  
  This has been used to study facets of $S_n$ using $P_n$ as a tool.  
  In this paper, we study the operation of ``rotating'' (or ``lifting'') valid inequalities for $S_n$ to obtain a valid inequalities for $P_n$.
  
  As an application, we describe a surprising relationship between (a) the parsimonious property of relaxations of the Symmetric Traveling Salesman Polytope and
  (b) a connectivity property of the ridge graph of the Graphical Traveling Salesman Polyhedron.

\end{abstract}
\maketitle

\setcounter{tocdepth}{2}
\tableofcontents


\section{Introduction}

Suppose that $S$ and $P$ are polyhedra, and that $S$ is a proper face of $P$.  If $a\iprod x \ge \alpha$ is a valid inequality for $S$, it can be ``rotated'' so
that it becomes also valid for $P$.  By ``rotation'' we mean modifying left and right hand sides of the inequality in such a way that the set of points in the
affine hull of $S$ which satisfy the inequality with equation remains the same, yet the hyperplane the inequality defines in the ambient space changes.
Technically, this amounts to adding equations to $a\iprod x \ge \alpha$, which are valid for $S$.

Once the inequality is rotated so that it is valid for $P$, one may ask which face of $P$ is defined by the rotated inequality.  Since $S\ne P$, there is never
only one such face, but even when we aim for inclusion-wise maximal faces of $P$ defined by some rotated version of $a\iprod x \ge \alpha$, in general, these are
not unique either.

Rotation is a standard tool in Discrete Optimization.  The most prominent example is probably (sequential) lifting, which is a constrained form of rotation.  In
this setting, $P$ is a polyhedron for which the non-negativity inequality $x_j \ge 0$ for a coordinate $j$ is valid, defining a non-empty face $S := P\cap \{x\mid
x_j=0\}$.  Then, an inequality valid for $S$ is rotated by adding scalar multiples of the equation $x_j=0$ to it in such a way that it becomes valid for $P$ and
the face defined by the rotated inequality is strictly greater than the face of $S$ defined by it.  By iterating this procedure, one may ``sequentially'' lift
inequalities which are valid for a smaller face $S$, which is an intersection of the faces defined by the non-negativity inequalities for a set of coordinates.
The face of $P$ defined by the sequentially lifted inequality may in general depend on the order in which the coordinates are processed.  The same procedure works
when generic inequalities $c\iprod x \ge \gamma$ are used instead of the non-negativity inequalities.  

Sequential lifting or other rotation-based tools are applied manually to find facets of polyhedra which contain faces which are better understood.  Often, the
faces are ``smaller versions'' of the original polyhedron.  Moreover, mechanisms of this kind are used computationally in cutting-plane algorithms where some
cutting-plane generation procedure first works on a face and then has to lift the obtained inequalities.

\paragraph{In this paper,} %
we study what rotating inequalities does for the Symmetric Traveling Salesman Polytope and the Graphical Traveling Salesman Polyhedron.  Let $n\ge 3$ be an
integer, $\Vsetn := \{1,\dots,n\}$ and $\Esetn$ be the set of all unordered pairs (two-element subsets) $\{i,j\} \in \Vsetn$, i.e., the set of edges of the
complete graph with vertex set $\Vsetn$.  The two polyhedra are subsets of the space $\RR^\Esetn$ of vectors indexed by the elements of $\Esetn$.  The Symmetric
Traveling Salesman Polytope $S_n$ is the convex hull of all incidence vectors of edge sets of circles with vertex set $\Vsetn$ (or, if you prefer, of Hamilton
cycles of the complete graph $K_n$).  The Graphical Traveling Salesman Polyhedron $P_n$ is the convex hull of all vectors corresponding to connected Eulerian
multi-graphs with vertex set $\Vsetn$.  (The precise definitions will be given below.)

Since the seminal work of Naddef \& Rinaldi \cite{NadRina91,NadRina93} on these two polyhedra, it is known that the former is a face of the latter.  Moreover,
Naddef \& Rinaldi proved a theorem which, in our terminology, says that, if an inequality defines a facet of $S_n$, then there is a unique maximal face of $P_n$
which can be obtained by rotating the inequality, and this maximal obtainable face is a facet of $P_n$.

Naddef \& Rinaldi managed to classify the facets of $P_n$ into tree types: non-negativity facets, degree facets, and the rest, called TT-facets.  While the degree
facets and non-negativity facets are both small in number and easily understood, the interesting class both for understanding the polyhedron and for applications
is the huge set of TT-facets.  By the theorem just mentioned, once one knows that the degree facets of $P_n$ are precisely those which contain $S_n$ --- also an
achievement of Naddef \& Rinaldi's paper ---, this also classifies the facets of $S_n$ into two types: non-negativity and TT-facets.  Again, for applications in
Discrete Optimization, the TT-facets are the important ones.

Not so long ago, Oswald, Reinelt and Theis \cite{OswReiThe05,OsRlTheis07} have refined the classification by splitting the TT-facets of $P_n$ into two subclasses:
NR-facets and non-NR-facets, depending on whether the intersection of the facet with $S_n$ is a facet of $S_n$ (these $P_n$ facets are called NR-facets) or a face
of $S_n$ of smaller dimension (these are called non-NR-facets).
The main difficulty in this sub-classification was showing that the non-NR class is not empty.  The existence of non-NR-facets has some unpleasant consequences
both for theoretical research and practical computational approaches to solving Traveling Salesman Problem instances.  On the theoretical side, it is much easier
to prove facet-defining property of inequalities for $P_n$ than for $S_n$.  Moreover, $P_n$ pleasantly preserves facet-defining property when a certain important
lifting operation for facet-defining inequalities (which replaces vertices by sets of vertices) is performed.  For $S_n$, this is not known to be true.  On the
computational side, in the context of cutting-plane methods for $S_n$, certain generic separation algorithms produce inequalities which are facet-defining for
$P_n$, but sometimes it is not clear whether these inequalities must be strengthened if they are to define facets of $S_n$.  Examples of such separation algorithms
include the local cuts method of Applegate, Bixby, Chv\`atal \& Cook \cite{ABCC2001,ABCC03,ABCCbook2006} (see the discussion in \cite{OsRlTheis07}) or the
path-lifting method of Carr \cite{Carr2004}.

In terms of rotation, the result in \cite{OswReiThe05,OsRlTheis07} shows that there are valid inequalities for $S_n$ which do not define facets of $S_n$, but which
can be rotated to define facets of $P_n$.  The starting point of this paper is the question what properties these valid inequalities for $S_n$ might have.  The
results we propose are most easily formulated using the terminology of polar polyhedra.  A polar polyhedron $S^\polar$ of a polyhedron $S$ has the property that
the points of $S^\polar$ are in bijection with the linear inequalities (up to scaling) for $S$.  Moreover, a point $a$ is contained in a face of dimension $k$ of
$S^\polar$, if, and only if, the corresponding inequality defines a face of dimension at least $\dim S-k$ of $S$.  In particular, the vertices of $S^\polar$ are in
bijection with the facets of $S$.  Also recall the concept of a polyhedral complex: a (finite) set of polyhedra, closed under taking faces, such that the
intersection of any two polyhedra in the set is a face of both.

We have results about the ``interesting'' part of the polar of $S_n$, namely the part which remains if we take only those faces of the polar, which do not contain
a vertex corresponding to a non-negativity facet of $S_n$.  Informally, this corresponds to taking only the TT-class of valid inequalities for $S_n$ (the
correspondence will be made precise later).

This subset of faces of the polar of $S_n$ is a polyhedral complex; let us denote it by $\Cplx$ for a moment.  Take a point in $\Cplx$, consider the corresponding
valid inequality for $S_n$, and rotate it.  A certain set of faces of $P_n$ can be defined by the rotated versions of this inequality.  Now we partition the points
contained in $\Cplx$ in the following way: two points are in the same cell of the partition, if, by rotating the corresponding valid inequalities, the two sets of
faces of $P_n$ which can be defined coincide.

In fact, the partition whose definition we have just outlined, gives a polyhedral subdivision $\TiltCplx$ of $\Cplx$, i.e., the set of closures of the cells is a
polyhedral complex, and every face of $\Cplx$ is a disjoint union of cells.  Indeed, this is true in the general situation when a polytope $S$ is a face of another
polytope $P$, and such a polyhedral subdivision is called a rotation complex.  In the TSP situation, we can say more:
\begin{enumerate}\Alphenumi
\item\label{intro:mainresutls:charac}%
  The decomposition of $\Cplx$ into cells can be described in a natural way that does not refer to rotation; in fact, it does not refer to the Graphical Traveling
  Salesman at all.  Indeed, for a point $a$ contained in $\Cplx$, it suffices to check the sign of all the expressions $a_{uv} - a_{uw} - a_{wv}$, with $u,v,w$
  three distinct vertices in $\Vsetn$.  (As customary, we use the abbreviated notation $uv := \{u,v\}$.)
\item\label{intro:mainresutls:inj}%
  The points in $\Cplx$ are in bijection with the ``important'' part of the polar of $P_n$ (the definition of polar here is not canonical and will be made
  precise), and this bijection maps faces of the polar of $P_n$ onto faces of the rotation complex $\TiltCplx$.  In other words, the polar of $P_n$ can be
  ``flattened'' onto the polar of $S_n$.
\end{enumerate}

Again, ``important'' is meant to be understood in the sense that it corresponds to considering TT-type inequalities only.

Recall that the common refinement of two polyhedral complexes is the set of all intersections of polyhedra in the two complexes.
Item~(\ref{intro:mainresutls:charac}) can be restated as saying that the rotation complex $\TiltCplx$ is the common refinement of $\Cplx$ with a natural projection
of the metric cone.  (The metric cone consists of all functions $\Esetn\to\RR_+$ satisfying the triangle inequality).  Note that the occurrence of the metric cone
in the context of the two polyhedra $S_n$ and $P_n$ is no surprise: it is known that $P_n$ is the intersection of the \textit{positive orthant} $\RR_+^\Esetn$ with
the Minkowski sum of $S_n$ and the dual of the metric cone~\cite{TheisStspGtspMinkowski}.  Item~(\ref{intro:mainresutls:inj}) addresses the uniqueness question for
faces defined by rotated inequalities addressed above.  Note, though, that having a point-wise bijection is a stronger statement than saying that the maximal faces
obtainable by rotation are unique.\todo{Not sure}

\paragraph{We believe these results to be of interest in their own right,} %
because they clarify the relationship between the valid inequalities for $S_n$ and $P_n$.  Having said that, in this paper, we apply them to a problem concerning
the ridge graph of $P_n$.  The ridge graph has as its vertices the facets, and two facets are linked by an edge if and only if their intersection is a ridge, i.e.,
a face of dimension $\dim P_n - 2$.  The ridge graph is of certain importance for the problem of computing a complete system of facet-defining inequalities, when
the points and extreme rays are given.  A common solution here is to search in the ridge graph, i.e., once a facet is found, its neighbors are computed.  A problem
which may occur is that, for some facets, computing the neighbors is not feasible (given the power of current computer systems).  Due to the connectivity of the
ridge graph, some of its vertices are allowed to be dead ends in the search, and still all vertices are reached by the search.  For example, when the facets of a
$d$-dimensional polytope are computed in this way, by Balinski's Theorem, one may omit $d-1$ arbitrarily selected facets from the search, and still reach all other
facets.  Very often, however, the number of facets whose neighbors cannot be computed is too large (exponential in the dimension).  Thus, one would like to prove
connectivity properties of the ridge graph which allow for these vertices to be dead ends in the search.

Our result on the ridge graph of $P_n$ states the following: If a system of NR-facet-defining inequalities satisfies the so-called parsimonious property
\cite{GoemBertim93,Goemans95}, the removal of the corresponding vertices from the ridge graph leaves connected components, each of which contains a vertex
corresponding to an NR-facet.  The proof of this makes use of~(\ref{intro:mainresutls:inj}) above in an essential way.  The statement has been used to prove the
completeness of an outer description for $P_9$ in \cite{OsRlTheis07} in the scenario sketched above.

\paragraph{This paper is organized as follows.} %
In the short second section, we shall define some basic concepts from polyhedral theory.  In Section~\ref{sec:expo}, will provide rigorous formulations of all
of our results.  Section~\ref{sec:tsp} contains the proofs of the results about the rotation complex, while the results about the ridge graph are proved in
Section~\ref{sec:parsi}.

We will need to make use of linear-algebraic and polyhedral ideas quite heavily.  Although we give all the relevant definitions, understanding this paper will be a
piece of hard work if one is not at ease with the theory of polyhedra, polarity, projective transformations, and polyhedral complexes, as laid out in the relevant
chapters of either \cite{Grunbaum03} or \cite{ZieglerLectPtp}.

\section{Some basic definitions and notations}\label{sec:prelim}

\subsubsection{Euclidean space notations}
We denote by $x\iprod y$ the standard scalar product in $\RR^m$.  For a linear subspace $L\subset \RR^m$, denote by $L^\orth := \{q\in\RR^m\mid q\iprod
x=0\;\forall x\in L\}$ the orthogonal complement of $L$.

For $X\subset \RR^m$, we denote by $\aff X$ the affine hull of $X$, i.e., the smallest affine subspace of $\RR^m$ containing $X$.  We let $\direc X$ denote the
linear space generated by the points $y-x$, $x,y\in X$.  Hence, $\aff X = x + \direc X$ holds for every $x\in \aff X$.

For $X\subset \RR^m$, we denote by $\close X$ the closure of $X$ in the topological sense.  The \textit{relative interior} $\relint P$ of a polyhedron $P$ is the
interior (in the topological sense) of $P$ in the affine space spanned by $P$, in other words, $\relint P = P \setminus \bigcup_{F\subsetneq P} F$, where the union
runs over all faces of $P$.
The \textit{boundary} of a polyhedron is $\partial P := P\setminus \relint P = \bigcup_{F\subsetneq P} F$ where the union runs over all faces of $P$.

\subsubsection{Projective mappings}
An mapping $g$ between vector spaces is called affine if there exists a constant (vector) $a$ such that $g-a$ is linear.
A mapping $f\colon L \to L'$ between two vector spaces is called \textit{projective,} if there exists a linear mapping
\begin{equation*}
  \begin{pmatrix}
    f_{00} & f_{01} \\
    f_{10} & f_{11} \\
  \end{pmatrix}
  = \tilde f\colon \RR\times L \to \RR\times L'
\end{equation*}
decomposable into linear mappings $f_{00}\colon\RR\to\RR$, $f_{01}\colon L \to\RR$, $f_{10}\colon \RR \to L'$, $f_{11}\colon L \to L'$, such that $f(x) = P(\tilde
f(1,x))$, with the shorthand $P(t,x) := x/t$.  Informally, we say that $f$ can be ``written as a linear mapping'' $\tilde f$.  Using matrices, $f_{00}$ can be
identified with a real constant, $f_{10}$ with a column-vector and $f_{01}$ with a row-vector.

\begin{remark}\label{rem:prelim:prj-map-concat}
  When $f$ and $g$ are projective mappings which can be written as linear mappings $\tilde f$ and $\tilde g$, respectively, then $f\circ g$ can be written as
  $\tilde f \circ \tilde g$.
\end{remark}

\subsubsection{Polyhedral complexes.}  %
A \textit{polyhedral complex} is a set of polyhedra $\Cplx$ with the properties that (a) if $F\in\Cplx$ and $G$ is a face of $F$, then $F\in \Cplx$; and (b) if
$F,G \in \Cplx$, then $F\cap G$ is a face of both $F$ and $G$.  The polyhedra in $\Cplx$ are called the faces of $\Cplx$, and faces of a $\Cplx$ having dimension
$0$ (or $1$, respectively) are called vertices (or edges, respectively) of $\Cplx$.  A \textit{sub-complex} of a polyhedral complex $\Cplx$ is a polyhedral
complex $\cplxD$ with $\cplxD \subset \Cplx$.

For a polyhedral complex $\Cplx$, we denote by $\sabs{\Cplx} := \bigcup_{F\in \Cplx} F$ its \textit{underlying point set}, and, informally, we say that a point $x$
is in $\Cplx$, if $x\in\sabs{\Cplx}$.  

For a polyhedron $P$, let $\Cplx(P)$ be the set of all of its faces.  This is a polyhedral complex with underlying point set $P$.  Moreover, we let $\bar\Cplx(P)$
be the polyhedral complex of all bounded faces of $P$.

For a polyhedral complex $\Cplx$ and a set of faces $\cplxD \subset \Cplx$, we define the deletion of $\cplxD$ in $\Cplx$ to be the polyhedral sub-complex of
$\Cplx$ consisting of all faces $F\in\Cplx$ whose intersection with all faces in $\cplxD$ is empty:
\begin{equation*}
  \dl(\cplxD,\Cplx) := \bigl\{ F \in \Cplx \bigm| \forall G \in \cplxD\colon F\cap G = \emptyset  \bigr\}
\end{equation*}

Let $\Cplx$ and $\cplxD$ be two polyhedral complexes.  $\cplxD$ is called a \textit{subdivision} of $\Cplx$, if, (a) every face of $\cplxD$ is contained in some
face of $\Cplx$; and (b) every face of $\Cplx$ is a union of faces of $\cplxD$.

Let $\Cplx$ and $\cplxD$ be two polyhedral complexes.  The \textit{common refinement} of $\Cplx$ and $\cplxD$ is the polyhedral complex whose faces are all the
intersections of faces of $\Cplx$ and $\cplxD$: $\Cplx \vee \cplxD := \{ F\cap G \mid F \in \Cplx, G\in\cplxD \}$.  The common refinement $\Cplx \vee \cplxD$ is a
subdivision of both $\Cplx$ and $\cplxD$.

Let $\Cplx$ be a polyhedral complex, and $f\colon \sabs\Cplx \to \RR^k$ a mapping.  We say that $f$ \textit{induces the polyhedral complex $\cplxD$}, if, for every
$F\in\Cplx$, its image $f(F)$ under $f$ is a polyhedron, and the set of all these polyhedra is equal to (the polyhedral complex) $\cplxD$.  The following wording
is customary: If $\cplxD'$ is a polyhedral complex and $f\colon \sabs\Cplx \to \sabs{\cplxD'}$ is a homeomorphism which induces a polyhedral complex $\cplxD$ which
is a subdivision of $\cplxD'$, then $f$ is called a \textit{refinement map.}  Two polyhedral complexes $\Cplx$ and $\cplxD$ are called combinatorially equivalent,
if there exists a bijection $\phi\colon \Cplx \to \cplxD$, which preserves the inclusion relation of faces, i.e., if $F\subset F'$ are two faces of $\Cplx$, then
$\phi(F)\subset\phi(F')$.  We say that a mapping $f\colon \sabs{\Cplx} \to \sabs{\cplxD}$ \textit{induces a combinatorial equivalence,} if $f$ induces the
polyhedral complex $\cplxD$.  In this case, $\Cplx$ and $\cplxD$ are combinatorially equivalent via the mapping $F\mapsto f(F)$.

A polyhedral complex is a (pointed) \textit{fan} if it contains precisely one vertex, and each face which is not a vertex is empty or a
pointed cone.  A fan $\Cplx$ is \textit{complete,} if $\abs\Cplx$ is equal to the ambient space.

The \textit{1-skeleton} or \textit{graph} of a polyhedral complex $\Cplx$ is the graph $G$ whose vertices are the vertices of $\Cplx$, with two vertices of $G$
being adjacent if and only if there exists an edge of $\Cplx$ containing them both.

For more on polyhedral complexes see the textbooks by Gr\"unbaum~\cite{Grunbaum03} or Ziegler~\cite{ZieglerLectPtp}.

\subsubsection{Miscellaneous.}  %
For a matrix $M$ we denote by $M^\Tp$ its transpose.  The restriction of a mapping $f\colon X\to Y$ to a set $Z\subset X$ is denoted by $\restr{f}{Z}$.


\section{Exposition of results}\label{sec:expo}

Fix an integer $n\ge 3$.  The \textit{Symmetric Traveling Salesman Polytope} is defined as the convex hull in $\RR^\Esetn$ of all edge sets of circles with vertex
set $\Vsetn$ (or Hamiltonian cycles in the complete graph $K_n$):
\begin{equation}\label{eq:tsp:def-S}
  S_n := \conv\bigl\{  \chi^{E(C)} \bigm| C\text{ is the circle with } V(C)=\Vsetn \bigr\},
\end{equation}
where $\chi^F$ denotes the characteristic vector of a set $F$, i.e., $\chi^F_e=1$, if $e\in F$, and $0$ otherwise.
Ever since the mid nineteen-fifties, when a series of short communications and papers initiated the study of this family of polytopes
\cite{Heller55a,Heller55b,Heller56,Kuhn55,Norman55}, it has received steady research attention.  Apart from being of importance in combinatorial optimization for
solving the famous Traveling Salesman Problem, which consists in finding a shortest Hamilton cycle in a complete graph with ``lengths'' assigned to the edges (see,
e.g., \cite{ABCCbook2006,DaFulkJohn54,GroetschPadberg85,JuReiRin95,Gutin_Naddef,SchrijverBk03}), their combinatorial and linear-algebraic properties have been an
object of research.  For example, questions about aspects of the graph (1-skeleton) have been addressed \cite{Sierksma98}, particularly focusing on its diameter
\cite{Rispoli98, RispoCosar98, SierkTeunTiejss95, SierkTiejss92}, which is conjectured to be equal to two by Gr\"otschel \& Padberg \cite{GroetschPadberg85}.

The second polyhedron which we will consider is defined to be the convex hull of all edge multi-sets of connected Eulerian multi-graphs on the vertex set
$\Vsetn$:
\begin{equation}\label{eq:tsp:def-P}
  \begin{aligned}
    P_n := \conv\{&x\in\ZZ_+^\Esetn \mid \\
    &x\text{ defines a connected Eulerian multi-graph with vertex set } \Vsetn \},
  \end{aligned}  
\end{equation}
where we identify sub-multi-sets of $\Esetn$ with vectors in $\ZZ_+^\Esetn$ (i.e., there are $x_e$ copies of edge $e$ present in the multi-graph).  This polyhedron
was introduced in \cite{CornFonluNadd} under the name of \textit{Graphical Traveling Salesman Polyhedron} and has since frequently occurred in the literature on
Traveling Salesman Polyhedra.  It is particularly important in the study of properties, mainly facets, of Symmetric Traveling Salesman Polytopes (e.g.,
\cite{Goemans95,NaddefPochet01,NadRina91,NadRina93,NadRina07}, see \cite{ABCCbook2006,Gutin_Naddef} for further references).

With few exceptions (for example \cite{FonlNadd92,Norman55} for the case $n\le 5$; \cite{BoydCunningh91} for $n=6,7$; \cite{ChristofJungerReinelt91,
  ChristoReinelt96, ChristofReineltStsp10} for $n=8,9$), no complete characterization of the facets of $\Stspn$ or $\Gtspn$ are known.  In fact, since the
Traveling Salesman Problem is NP-hard, there cannot exist a polynomial time algorithm producing, for every $n$ and every point $x\in\RR^\Esetn$, a hyperplane
separating $x$ from $S_n$, unless $P$=$NP$.  Another noteworthy argument for the complexity of these polytopes is a result of Billera \& Saranarajan
\cite{BilleraSaranarajan96}: For every 0/1-polytope $P$, there exists an $n$ such that $P$ is affinely isomorphic to a face of $\Stspn$.

The polyhedron $\Gtspn$ has been called the \textit{Graphical Relaxation} of $\Stspn$ by Naddef \& Rinaldi \cite{NadRina91,NadRina93} who discovered and made use
of the fact that $\Stspn$ is a face of $\Gtspn$: While the latter is a full-dimensional unbounded polyhedron in $\RR^\Esetn$ \cite{CornFonluNadd}, the former is a
polytope of dimension $\binom{n}{2}-n$ \cite{Norman55}, and the inequality $\sum_{e\in\Esetn} x_e \ge n$ is valid for $\Gtspn$ and satisfied with equality only by
cycles, thus attesting to the face relation.

\subsection{Definitions of the polars}

From now on, assuming\footnote{%
  We choose $n\ge 5$ because otherwise the non-negativity inequalities do not define facets of $S_n$.%
} 
$n\ge 5$ to be fixed, we will suppress the subscript in $\Stspn$ and $\Gtspn$ and just write $S$ and $P$.

The set of facets of $P$ containing $S$ is known.  For $u\in\Vsetn$, let $\delta_u$ be the point in $\RR^\Esetn$ which is $\nfrac12$ on all edges incident to $u$
and zero otherwise.  It is proven in \cite{CornFonluNadd} that the inequalities $\delta_u\iprod x \ge 1$, $u\in\Vsetn$, define facets of $P$, the so-called
\textit{degree facets.}  Clearly, $S$ is the intersection of all the degree facets.

It is customary to write inequalities valid for $P$ in the form $a\iprod x \ge \alpha$, and we define the polars accordingly.  Define the linear space $\Lsp$ to be
the set of solutions to the $n$ linear equations $\delta_u\iprod x = 0$, $u\in\Vsetn$.  Note that the $\delta_u$ are linearly independent, $\dim S = \dim \Lsp$,
and the affine hull of $S$ is a translated copy of $\Lsp$.  Whenever $z$ is a relative interior point of $S$, the polar of $S$ may be defined as the following set:
\begin{equation}\label{eq:tsp:def-Spolar}
  S^\polar := \{  a\in\Lsp   \mid   (-a)\iprod (x-z) \le 1 \;\forall x\in S \}.
\end{equation}
So a point $a\in S^\polar$ corresponds to a valid inequality $a\iprod x \ge a\iprod z -1$ of $S$.  Changing $z$ amounts to submitting $S^\polar$ to a projective
transformation.  Although our results do not depend on the choice of $z$ (see \cite{subdivpolarface}), it makes things easier to define
\begin{equation}\label{eq:tsp:def-z}%
  z := \frac{2}{n-1} \One = \frac{1}{(n-1)!/2} \sum_{C} \chi^{E(C)} = \frac{2}{n-1} \sum_{u=1}^n \delta_u,
\end{equation}
where the first sum extends over all cycles with vertex set $\Vsetn$.  So $z$ is at the same time the average of the vertices $\chi^{E(C)}$ of $S$ and a weighted
sum of the left-hand sides $\delta_u$ of the equations.

Next, we construct a kind of polar for $P$.  For this, we might just intersect the polar cone $C:=\{(\alpha,a)\in\RR\times\RR^{\Esetn} \mid a\iprod
x\ge\alpha\;\forall x\in P\}$ with the hyperplane $\alpha+\sum_e a_e =1$.  From the observation \cite{CornFonluNadd} that $P$ is the Minkowski sum of
$\RR_+^\Esetn$ with a finite set of points in $\RR_+^\Esetn$, we see that this hyperplane intersects all extreme rays of $C$ except for $\RR_+(\alpha,0)$ which
does not correspond to a facet of $P$.  However, for our needs, it will be better to define $P^\polar$ to be a polyhedron which is projectively isomorphic to the
one we have just described:
\begin{equation*}
  P^\polar := \{ a\in \RR^m \mid a\iprod x\ge 1 \;\forall x\in P\}.
\end{equation*}

This set is sometimes called the \textit{blocking polyhedron} of $P$.  Calling it the \textit{polar (polyhedron)} of $P$ is justified by that fact that,
essentially, it has the defining properties of a polar polytope.  Let us elaborate.  For a face $F$ of $P$, define its \textit{conjugate face} $F^\poOps$ to be the
set of points $a \in P^\polar$ satisfying $a\iprod x = 1$ for every $x\in F$.
For brevity, we say that a face $F$ of $P$ is \textit{\label{txt:def:tsp:good-face}good} if it is not contained in a \textit{non-negativity facet,} i.e., a
facet defined by $x_e\ge 0$ (these inequalities do define facets of $P$ \cite{CornFonluNadd}).
Note that $P^\polar \subset\RR_+^\Esetn$, so the non-negativity inequalities are also valid for $P^\polar$, and hence $P^\polar$ has non-negativity faces.  (They
are possibly empty.)

\begin{lemma}\label{lem:prop-blockingpoly}
  The polar $P^\polar$ of $P$ has the following properties.
  \begin{enumerate}\alphenumi
  \item\label{prop:PRELIM:blocking-polarity:b} %
    Let $a\in\RR^\Esetn\setminus\{0\}$ and $d\ge-1$.  Then $a$ is a relative interior point of a non-trivial face of $P^\polar$ with co-dimension $d+1$ if and
    only if $(a,1)$ is valid for $P$ and defines a face of dimension $d$ of $P$.
  \item\label{prop:PRELIM:blocking-polarity:c} %
    Let $N \subset \Cplx(P)$ be the set of intersections of non-negativity facets $P$, and similarly $N' \subset \Cplx(P^\polar)$ be the set of all intersections
    of non-negativity faces of $P^\polar$.  Then conjugation of faces
    $\Cplx(P)\setminus N \to \Cplx(P^\polar)\setminus N'$,
    $F\mapsto F^\poOps := \{ a \in P^\polar \mid a\iprod x = 1 \;\forall x\in F\}$
    defines an inclusion reversing bijection.
  \item\label{prop:PRELIM:blocking-polarity:d} %
    A face $F$ of $P$ is good if and only if $F^\poOps$ is bounded.
  \qed
  \end{enumerate}
\end{lemma}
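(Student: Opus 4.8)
The plan is to base everything on two structural facts about $P$: the Minkowski decomposition $P = Q + \RR_+^{\Esetn}$ with $Q$ a polytope, established in \cite{CornFonluNadd}, and the resulting \emph{biduality} $(P^\polar)^\polar = P$, which is Fulkerson's theorem for blocking polyhedra (see, e.g., \cite{SchrijverBk03}). Three consequences will be used throughout: $P^\polar = \{a\ge 0 : a\iprod q \ge 1\ \forall q\in\vrt Q\}$ is a polyhedron; its recession cone is $\RR_+^{\Esetn}$, so $P^\polar$ is full-dimensional and pointed; and for every $x\in P=(P^\polar)^\polar$ the hyperplane $\{a : a\iprod x = 1\}$ supports $P^\polar$, so that $F^\poOps$ is a face of $P^\polar$ for each face $F$ of $P$ and, symmetrically, $H^\poOps := \{x\in P : a\iprod x = 1\ \forall a\in H\}$ is a face of $P$ for each face $H$ of $P^\polar$. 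In part~(a) I would assume $a\in P^\polar$ from the start, since otherwise $(a,1)$ is not valid for $P$ and $a$ lies in no face of $P^\polar$, so both sides fail.

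For part~(a), fix $a\in P^\polar$ and let $A := \{q\in\vrt Q : a\iprod q = 1\}$ and $E_0 := \{e : a_e = 0\}$ index the constraints of $P^\polar$ active at $a$. The minimal face $G$ of $P^\polar$ with $a\in\relint G$ then has $\aff G = \{a' : a'\iprod q = 1\ (q\in A),\ a'_e = 0\ (e\in E_0)\}$, so, using full-dimensionality, $\codim G = \dim\bigl(\spn A + U\bigr)$ with $U := \spn\{\chi^e : e\in E_0\}$. Decomposing a point of $P$ as $q+r$ with $q\in Q$, $r\in\RR_+^{\Esetn}$, and using $a\ge 0$, the face of $P$ cut out by $a\iprod x \ge 1$ is $F = \conv A + \cone\{\chi^e : e\in E_0\}$, whence $\dim F = \dim(\aff A + U)$; in particular $F\ne\emptyset$ iff $A\ne\emptyset$. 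It then remains to show $\dim(\spn A + U) = \dim(\aff A + U) + 1$ when $A\ne\emptyset$. The key point is that $\aff A\cap U=\emptyset$ (hence $0\notin\aff A$): every vector of $U$ is annihilated by $a$, whereas $a\iprod y = \sum_q\lambda_q = 1$ for any affine combination $y=\sum_{q\in A}\lambda_q q$. A short linear-algebra computation (using $\spn A = \spn(\aff A)$ and noting that a vector $v\in\spn(\aff A)\cap U$ would force its affine part into $\aff A\cap U$) gives the identity, so $\codim G = \dim F + 1$. The widened case $A=\emptyset$, i.e.\ $d=-1$, is vacuous on both sides, since then $a\iprod x\ge1$ supports no face of $P$.

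For part~(b), I would observe that $F\mapsto F^\poOps$ and $H\mapsto H^\poOps$ reverse inclusion and then check that they restrict to mutually inverse bijections between $\Cplx(P)\setminus N$ and $\Cplx(P^\polar)\setminus N'$. The crux is the description of the excluded faces: a face $F$ of $P$ lies outside $N$ exactly when it is nonempty and exposed by a point of $P^\polar$, equivalently when every functional exposing $F$ has strictly positive minimum over $P$; for if a supporting $c\ge0$ has $\min_P c=0$ then $F=\{x\in P : x_e=0,\ e\in\supp c\}\in N$, while $F=\{x\in P : x_e=0,\ e\in E'\}$ is exposed by $\sum_{e\in E'}\chi^e$, of minimum $0$. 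Given such an $F$ with exposing point $a\in P^\polar$, the description in~(a) yields $F^\poOps = G$ (the minimal face of $P^\polar$ through $a$), $F^{\poOps\poOps}=F$, and --- because $A\ne\emptyset$ --- that $G$ is not of the form $\{a'\in P^\polar : c'\iprod a'=0\}$, i.e.\ $G\notin N'$; running the same argument with $P^\polar$ in place of $P$ (legitimate by biduality) handles the reverse direction, giving the bijection. This is the step I expect to require the most care: confirming that conjugation becomes bijective after removing \emph{precisely} $N$ and $N'$, which hinges on the characterization of $N$ (and $N'$) through exposing functionals and on using $(P^\polar)^\polar=P$ in exactly the right form.

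For part~(c), let $F\in\Cplx(P)\setminus N$, so $F^\poOps\ne\emptyset$ by~(b); since the recession cone of $P^\polar$ is $\RR_+^{\Esetn}$, the recession cone of $F^\poOps$ is $\{v\ge0 : v\iprod x = 0\ \forall x\in F\}$. If $F$ is good it contains a relative interior point that is strictly positive in every coordinate, forcing this cone to be $\{0\}$, so $F^\poOps$ is a polytope. If $F$ is not good, say $F\subset\{x_e=0\}$, then adding $t\chi^e$ ($t\ge0$) to any element of $F^\poOps$ preserves both membership in $P^\polar$ and the equalities defining $F^\poOps$, so $\chi^e$ lies in the recession cone and $F^\poOps$ is unbounded. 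Once the decomposition $P=Q+\RR_+^{\Esetn}$ and biduality are in hand, the computations in~(a) and~(c) are routine; the essential work is the bookkeeping in~(b).
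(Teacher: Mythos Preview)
The paper gives no proof of this lemma --- it is explicitly left to the reader --- so there is nothing to compare your proposal against.  Your approach via the Minkowski decomposition $P = Q + \RR_+^{\Esetn}$ and Fulkerson biduality $(P^\polar)^\polar = P$ is the natural one and is correct.

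Parts~(a) and~(c) go through as you outline.  For part~(b), your characterization that $F\notin N$ iff $F$ is non-empty and exposed by a point of $P^\polar$ is correct, but one step deserves to be made explicit: you argue that a non-empty $F\in N$ has \emph{some} exposing functional with minimum~$0$, whereas what is needed is that \emph{no} exposing functional has minimum~$1$.  This follows because $\aff F=\{x:x_e=0,\ e\in E'\}$ is a linear subspace, so any hyperplane $\{a\iprod x=1\}$ containing $F$ would have to contain~$0$, which is impossible.  Equivalently --- and this is perhaps the cleanest route to the bijection --- one checks directly that $F^\poOps=\emptyset$ whenever $\emptyset\ne F\in N$: any $a\in F^\poOps$ must have $a_{e'}=0$ for every $e'\notin E'$ (since $F+\RR_+\chi^{e'}\subset F$), whence $a\iprod x=0\ne 1$ for $x\in F$.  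With this and the symmetric statement for $P^\polar$ in hand, the identity $(F^\poOps)^\poOps=F$ for $F\notin N$ yields the inclusion-reversing bijection exactly as you say.
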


We leave the proof of this lemma to the reader.

The points $\delta_u$ defined above are vertices of $P^\polar$, more precisely, they are the vertices of the face $S^\poOps$ of $P^\polar$.  

\subsection{Definitions of the polyhedral complexes}

We consider the set of faces of $S^\polar$ which do not contain a vertex corresponding to a \textit{non-negativity inequality $x_e \ge 0$} for $e\in\Esetn$.  In
symbols, if $N$ denotes the set of these vertices of $S^\polar$, we deal with the polyhedral complex
\begin{equation}\label{eq:tsp-del-s}%
  \dl(N,S^\polar) := \{ F \text{ face of $S^\polar$} \mid F\cap N = \emptyset \} = \dl(\{\{x\}\mid x\in N\},\Cplx(S^\polar)).
\end{equation}

\subsubsection{Tight triangularity}

A \textit{triangle rooted at $u$} is a pair $u,vw$ consisting of a vertex $u\in\Vsetn$ and an edge $vw\in\Esetn$ not incident to $u$.  Let $a\in\RR^\Esetn$.  We
say that $a$ is \textit{\label{txt:def:metric}metric,} if it satisfies the triangle inequality, i.e., $t_{u,vw}(a) := a_{vu}+a_{uw}-a_{vw} \ge 0$ for all rooted
triangles $u,vw$.  Note that this implies $a_e \ge 0$ for all $e$.  We follow \cite{NadRina93} in calling $a$ \textit{tight triangular (TT),} if it is metric and
for each $u\in\Vsetn$ there exists $v,w$ such that the triangle inequality for this rooted triangle is satisfied with equation: $t_{u,vw}(a)=0$.  Abusively, we say
that a linear inequality is metric, or TT, if the left hand side vector has the property.

\subsubsection{Metric cone, TT-fan and flat TT-fan}

The \textit{metric cone,} $C=C_n$, consists of all \textit{(semi-)metrics} on $\Vsetn$.  In our context, a (semi-)metric is a point $d\in\RR^\Esetn$ which
satisfies the \textit{triangle inequality}
\begin{equation}\label{eq:triang-ieq}
  d_{vu}+d_{uw} - d_{vw} \ge 0
\end{equation}
for all distinct $u,v,w \in \Vsetn$.  Thus, in the terminology just defined, a semi-metric is just a metric point.
If we now let $F_{u,vw}$ denote the face of $C$ defined by inequality \eqref{eq:triang-ieq} we define the \textit{TT-fan} as follows:
\begin{equation}\label{eq:expo:TTcplx-preimg}
  \TTcplx' := \bigcap_{u\in\Vsetn} \bigcup_{v,w\ne u} \Cplx(F_{u,vw}) \quad\subset \Cplx(C).
\end{equation}
$\TTcplx'$ is a fan.  ``TT'' stands for ``tight triangular'', a term coined by Naddef \& Rinaldi \cite{NadRina93} for a point's property of being in
$\sabs{\TTcplx'}$.  However, we are not aware of any reference to this fan in the literature.  Heuristically, the elements of $\sabs{\TTcplx'}$ are metrics on
$\Vsetn$ satisfying the following: for every point $u\in\Vsetn$, there exist two other points $v,w\in\Vsetn$ such that $u$ is ``middle point'' of the ``line
segment'' between $v$ and $w$.

\label{def:p}Denote by $p\colon \RR^\Esetn \to \Lsp$ the orthogonal projection.  We will prove in the next section (Lemma~\ref{lem:TTfans}) that applying $p$
to $\TTcplx'$ produces a fan $\TTcplx$ isomorphic to $\TTcplx'$:
\begin{equation}\label{eq:def-TTcplx}
  \TTcplx := \{ p(F) \mid F\in \TTcplx' \}.
\end{equation}
We call $\TTcplx$ the \textit{flat TT-fan}.

\subsubsection{Definition of the edge sets $E^u(a)$}

Let $a\in S^\polar$.  For every $u\in\Vsetn$, we let $E^u(a)$ be the set of edges on which the slack of the triangle inequality~\eqref{eq:triang-ieq} is minimized:
\begin{multline}\label{eq:expo:Eu}
  E^u(a) := \Bigl\{ vw \in\Esetn \Bigm| u\ne v,w\text{, \ and }\\
  a_{vu}+a_{vw}-a_{vw} = \min_{v',w'\ne u} a_{v'u}+a_{uw'}-a_{v'w'} \Bigr\}.
\end{multline}


\subsubsection{The TT-sub-complex of $P^\polar$}

Finally, we define a sub-complex of $\Cplx(P^\polar)$ consisting of all TT-points of $P^\polar$.  This sub-complex is what remains of the complex
$\bar\Cplx(P^\polar)$ of bounded faces of $P^\polar$ after deleting the conjugate face of $S$ in $P^\polar$, in symbols $\dl(S^\poOps,\bar\Cplx(P^\polar))$.

It will become clear in the next section (see Remark~\ref{rem:sbcplx-bd-tt}) that the points of the complex $\dl(S^\poOps,\bar\Cplx(P^\polar))$ are precisely the
points in $\sabs{\bar\Cplx(P^\polar)}$ which are tight triangular.

\subsection{Rotation and statements of the results}

We now give the rigorous definition of ``rotation'' and of the rotation complex, as outlined in the introduction.  More accurately, we define a ``rotation
partition'' of $\sabs{\dl(N,S^\polar)}$, which will turn out to be a polyhedral complex subdividing $\dl(N,S^\polar)$.

A point $a \in S^\polar$ corresponds to an inequality $a\iprod x \ge a\iprod z -1$ valid for $S$.  Rotating this inequality amounts to adding an equation valid for
$S$.  The left-hand side $q$ of such an equation is a linear combination of the left-hand sides of the equations $\delta_u\iprod x = 1$, and the right-hand side
coincides with $q\iprod z$.  Hence, for a fixed $q$, rotating the inequality $a\iprod x \ge a\iprod z -1$ by $q$ gives the following 
\begin{equation}\label{eq:rotated-ieq}
  (a+q)\iprod x \ge  a\iprod z -1 + q\iprod z.
\end{equation}

For $a \in \sabs{\dl(N,S^\polar)}$, let $\Pfaces(a) \in \Cplx(P)$ be the set of faces of $P$ which can be defined by the rotated version of the inequality
corresponding to $a$.  More precisely, a set $F\subset\RR^\Esetn$ is in $\Pfaces(a)$ if, and only if, there exists a $q$ as above, such that the rotated
inequality~\eqref{eq:rotated-ieq} is valid for $P$, and $F$ is the set of points in $P$ satisfying it with equality: $F = \{x\in P\mid (a+q)\iprod x = a\iprod z
-1 + q\iprod z \}$.

Now we define a partition $\TiltCplx^\circ$ of $\sabs{\dl(N,S^\polar)}$, by letting two points $a,b$ be in the same cell of $\TiltCplx^\circ$ if and only if
$\Pfaces(a) = \Pfaces(b)$.  Moreover, let $\TiltCplx$ be the set of all closures of cells of $\TiltCplx^\circ$:
\begin{equation*}
  \TiltCplx := \{ \close X \mid X \in \TiltCplx^\circ \}.
\end{equation*}
We call $\TiltCplx$ the \textit{rotation complex} (the word ``complex'' is justified by the following theorem).

\begin{theorem}\label{thm:expo:char}
  $\TiltCplx$ is a polyhedral complex.  Moreover, $X\mapsto\close X$ and $F\mapsto\relint F$ are inverse bijections between $\TiltCplx^\circ$ and $\TiltCplx$.
  The following is true.
  \begin{enumerate}\alphenumi
  \item\label{thm-enum:expo:char:TTfan} %
    The rotation complex $\TiltCplx$ is the common refinement of $\dl(N,S^\polar)$ and the flat TT-fan $\TTcplx$.
  \item\label{thm-enum:expo:char:E} %
    Two points $a$, $b$ in $\sabs{\dl(N,S^\polar)}$ are in the relative interior of the same face of the rotation complex $\TiltCplx$ if, and only if, they are in
    the relative interior of same face of $S^\polar$ and $E^u(a) = E^u(b)$ for all $u\in\Vsetn$.
  \end{enumerate}
\end{theorem}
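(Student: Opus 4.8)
\textbf{Proof plan for Theorem~\ref{thm:expo:char}.}

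The plan is to analyze the rotation operation directly in the polar picture and to show that the combinatorial type of $\Pfaces(a)$ is governed by exactly two pieces of data: the face of $S^\polar$ whose relative interior contains $a$, and the edge sets $E^u(a)$. First I would translate the definition of rotation into polar language. Adding $q = \sum_u \lambda_u \delta_u$ to the inequality $a\iprod x\ge a\iprod z-1$ produces $(a+q)\iprod x\ge (a+q)\iprod z-1$, so the rotated left-hand side is simply $a+q$, ranging over the affine subspace $a+\direc S^\poOps$ spanned by translates of the $\delta_u$. The rotated inequality is valid for $P$ if and only if $a+q\in P^\polar$, and the face it defines is the conjugate face $(a+q)^\poOps$. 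Hence $\Pfaces(a) = \{(a+q)^\poOps \mid q\in\lin S^\poOps,\ a+q\in P^\polar\}$, i.e.\ $\Pfaces(a)$ records the faces of $P^\polar$ that are met by the affine subspace $(a+\lin S^\poOps)\cap P^\polar$. Since $p$ is the orthogonal projection onto $\Lsp$ with kernel $\lin S^\poOps$, the data of which faces of $P^\polar$ meet $a+\lin S^\poOps$ depends only on $p(a)$ together with how $a+\lin S^\poOps$ sits relative to $P^\polar$; the first main reduction is that $\Pfaces(a)=\Pfaces(b)$ iff $p(a)=p(b)$ (equivalently $a,b$ lie in the relative interior of the same face of $S^\polar$, using that $\dl(N,S^\polar)$ consists of faces that are ``good'' and hence map bijectively) \emph{and} the intersection pattern with $P^\polar$ along the fibre agrees.

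Next I would identify the intersection pattern with the combinatorial data of Lemma~\ref{lem:prop-blockingpoly} and the description of $P^\polar$ via the metric cone. The key point is the known fact (recalled in the introduction, reference~\cite{TheisStspGtspMinkowski}) that $P = \RR_+^\Esetn\cap (S + C^*)$, which dualizes to a description of $P^\polar$ near its TT-part in terms of the facet structure coming from the triangle inequalities; concretely, for $a\in S^\polar$ the nearest point of $a+\lin S^\poOps$ lying in $P^\polar$ is obtained by subtracting, for each $u$, the appropriate multiple of $\delta_u$ so that the minimum triangle slack $\min_{v,w\ne u}t_{u,vw}(a)$ is driven to zero, and the facets of $P^\polar$ that get activated in this process are exactly those indexed by the rooted triangles $(u,vw)$ with $vw\in E^u(a)$. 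Thus $\Pfaces(a)$ is determined by $p(a)$ and the tuple $(E^u(a))_{u\in\Vsetn}$, giving one direction of part~(\ref{thm-enum:expo:char:E}); the converse direction, that agreement of face-of-$S^\polar$ and of all $E^u$ forces $\Pfaces(a)=\Pfaces(b)$, follows because these data pin down the minimal face of $P^\polar$ met by $a+\lin S^\poOps$ and, by convexity of $P^\polar$ and the fan structure, the entire collection of faces met.

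Part~(\ref{thm-enum:expo:char:TTfan}) I would then deduce from~(\ref{thm-enum:expo:char:E}) together with Lemma~\ref{lem:TTfans}. The condition ``$a,b$ lie in the relative interior of the same face of $S^\polar$'' says precisely that $a,b$ lie in the relative interior of a common face of $\dl(N,S^\polar)$; the condition ``$E^u(a)=E^u(b)$ for all $u$'' is, by~\eqref{eq:expo:TTcplx-preimg} and the definition of the $F_{u,vw}$, precisely the condition that $p(a)$ and $p(b)$ lie in the relative interior of a common face of the flat TT-fan $\TTcplx$ (one should check that the projection $p$ does not lose the relevant information, which is exactly the content of $\TTcplx\cong\TTcplx'$). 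Since two points lie in the relative interior of a common face of a common refinement $\Cplx\vee\cplxD$ iff they lie in relative interiors of common faces of each of $\Cplx$ and $\cplxD$, this identifies $\TiltCplx$ with $\dl(N,S^\polar)\vee\TTcplx$ and in particular proves that $\TiltCplx$ is a polyhedral complex and that closure and relative-interior are the claimed inverse bijections between $\TiltCplx^\circ$ and $\TiltCplx$. The main obstacle I anticipate is the second reduction: making rigorous the claim that the intersection pattern of the affine fibre $a+\lin S^\poOps$ with the unbounded polyhedron $P^\polar$ is controlled exactly by the edge sets $E^u(a)$ — this requires carefully relating the Minkowski-sum description of $P$ (hence the dual facet structure of $P^\polar$ coming from rooted triangles) to the explicit ``push to zero slack'' computation, and handling the good-face hypothesis so that all faces in sight are bounded and the conjugation bijection of Lemma~\ref{lem:prop-blockingpoly}(\ref{prop:PRELIM:blocking-polarity:c}) applies cleanly.
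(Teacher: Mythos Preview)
Your overall architecture --- establish (\ref{thm-enum:expo:char:E}) first, then derive (\ref{thm-enum:expo:char:TTfan}) via the equivalence ``$E^u(a)=E^u(b)$ for all $u$'' $\Leftrightarrow$ ``$a,b$ in the same open face of $\TTcplx$'' --- matches the paper's (this equivalence is Lemma~\ref{lem:tiltcplx-refine}).  Where you diverge is in the argument for (\ref{thm-enum:expo:char:E}).

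Two small but genuine errors in your polar translation.  First, the rotated inequality $(a+q)\iprod x\ge (a+q)\iprod z-1$ being valid for $P$ is \emph{not} the same as $a+q\in P^\polar$: membership in $P^\polar$ requires right-hand side $1$, so you need the projective rescaling $(a+q)/((a+q)\iprod z-1)$, and you must control when the denominator is positive (this is exactly the map $\pi$ and the content of Lemma~\ref{lemma:gamma-never-vanish} used for Theorem~\ref{thm:expo:inj}).  Second, your ``first main reduction'' that $\Pfaces(a)=\Pfaces(b)$ forces $p(a)=p(b)$ is vacuous: $a,b$ already live in $S^\polar\subset L$, so $p$ is the identity on them.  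What you presumably mean is that $a,b$ define the same face of $S$; this is not a projection statement but the fact that the rotated inequalities all cut out the same face of $S$.

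The substantive gap is your ``second reduction''.  You plan to control the intersection pattern of the fibre $a+L^\bot$ with $P^\polar$ via the Minkowski-sum description $P=\RR_+^{\Esetn}\cap(S+C^*)$ from~\cite{TheisStspGtspMinkowski}.  Dualizing that description to obtain the facets of $P^\polar$ near the TT-part is not straightforward, and in any case the paper does not go this route.  Instead it works directly in $P$ and proves the combinatorial Lemma~\ref{lem:face-of-gtsp-uniq-determ-ham-shtcts}: a good face of $P$ is uniquely determined by the cycles it contains together with its set of \emph{feasible shortcuts} $\sct_{u,vw}$.  With this in hand, the paper shows that $\Pfaces(a)$ is in explicit bijection with the subsets $I\subset\Vsetn$: for each $I$ one takes $q=\sum_{u\notin I}\delta_u$ added to the TT-representative $\mkttpt(a)$, and the resulting face has feasible-shortcut set exactly $\bigcup_{u\in I}\{\sct_{u,e}\mid e\in E^u(a)\}$.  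This makes it transparent that $\Pfaces(a)$ depends only on the face of $S$ defined by $a$ (the cycles) and the tuple $(E^u(a))_u$ (the shortcuts), and conversely that different tuples give different $\Pfaces$.  Your fibre/Minkowski-sum picture is morally dual to this, but as written it does not supply the replacement for Lemma~\ref{lem:face-of-gtsp-uniq-determ-ham-shtcts}, which is the lemma actually doing the work.
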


This corresponds to item~(\ref{intro:mainresutls:charac}) on page~\pageref{intro:mainresutls:charac} in the introduction, while the next theorem corresponds to
item~(\ref{intro:mainresutls:inj}).

\begin{theorem}\label{thm:expo:inj}
  There is a projective homeomorphism $\pi\colon \sabs{\dl(S^\poOps,\bar\Cplx(P^\polar))} \to \sabs{\dl(N,S^\polar)}$ which induces a combinatorial equivalence
  between the polyhedral complex $\dl(S^\poOps,\bar\Cplx(P^\polar))$ and the rotation complex $\TiltCplx$.
\end{theorem}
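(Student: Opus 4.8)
The plan is to write $\pi$ down explicitly as a projective map coming from ``restriction to $S$'', to invert it by means of the \emph{unique tight-triangular lift} of a point of $\Lsp$, and then to match the face structures with the help of the $E^u$-characterization of the rotation complex in Theorem~\ref{thm:expo:char}(b). For $a\in P^\polar$ the inequality $a\iprod x\ge1$ is valid for $P$, hence for $S$. Using $\Lsp^\orth=\spn\{\delta_u\mid u\in\Vsetn\}$, decompose $a=p(a)+\sum_u\lambda_u\delta_u$. On $\aff S$ one has $\delta_u\iprod x=1$ for all $u$, so $\sum_u\lambda_u\delta_u\iprod x$ is there the constant $\sum_u\lambda_u$; since $\delta_u\iprod z=1$ and $p(z)=0$ (hence $p(a)\iprod z=0$), this constant equals $a\iprod z$, and the restriction of $a\iprod x\ge1$ to $\aff S$ is $p(a)\iprod x\ge1-a\iprod z$. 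Because $z\in\relint S$, a face of $P$ cut out by $a\iprod x\ge1$ contains $z$ iff it contains $S$, i.e.\ $a\iprod z=1\iff a\in S^\poOps$; thus on $\sabs{\dl(S^\poOps,\bar\Cplx(P^\polar))}$ we have $a\iprod z>1$, and we set
\[
  \pi(a):=\frac{p(a)}{a\iprod z-1}\in\Lsp .
\]
A direct check gives $\pi(a)\iprod x\ge\pi(a)\iprod z-1=-1$ on $S$, so $\pi(a)\in S^\polar$ by \eqref{eq:tsp:def-Spolar}, and $\pi(a)$ is precisely the point of $S^\polar$ representing the restriction of $a\iprod x\ge1$ to $S$. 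The map $\pi$ is projective, realised by the linear map $(t,a)\mapsto(a\iprod z-t,\,p(a))$. Finally, since the domain consists of good tight-triangular points (Remark~\ref{rem:sbcplx-bd-tt} and Lemma~\ref{lem:prop-blockingpoly}), the restricted inequality defines a face of $S$ contained in no non-negativity facet, so $\pi(a)\in\sabs{\dl(N,S^\polar)}$.

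Next I invert $\pi$. The computation $t_{u,vw}(\delta_s)=1$ if $s=u$ and $0$ otherwise gives $t_{u,vw}(a)=t_{u,vw}(p(a))+\lambda_u$; hence, for any $b\in\Lsp$, the vector $\hat b:=b+\sum_u\lambda_u(b)\delta_u$ with $\lambda_u(b):=-\min_{vw}t_{u,vw}(b)$ satisfies $t_{u,vw}(\hat b)=t_{u,vw}(b)-\min_{v'w'}t_{u,v'w'}(b)\ge0$, with equality for some $vw$ at each $u$, so $\hat b$ is tight triangular, and the tight-triangular vectors projecting onto a positive multiple of $b$ are exactly the positive multiples of $\hat b$. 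Using the fact (from the Naddef--Rinaldi facet theory of $P$) that every tight-triangular inequality is valid and good for $P$, for $b\in\sabs{\dl(N,S^\polar)}$ the ray $\RR_+\hat b$ meets $\bd P^\polar$ in a unique point $\sigma(b)$ lying in a bounded face disjoint from $S^\poOps$; in the present normalisation $\sigma(b)=\hat b/(\hat b\iprod z-1)$. Since $p(\hat b)=b$, the scaling bookkeeping above yields $\pi(\sigma(b))=b$; and uniqueness of the tight-triangular lift, together with the observation that $a$ and $\mu a$ ($\mu>0$) cannot both lie on $\bd P^\polar$ unless $\mu=1$, yields $\sigma(\pi(a))=a$. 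So $\pi$ is a bijection of the two underlying point sets; both are compact (a polytope, resp.\ a finite union of bounded faces of a polyhedron), so the continuous bijection $\pi$ is a homeomorphism.

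It remains to see that $\pi$ carries the faces of $\dl(S^\poOps,\bar\Cplx(P^\polar))$ onto those of $\TiltCplx$. By Theorem~\ref{thm:expo:char}(b) a cell of $\TiltCplx^\circ$ is exactly the set of points of $\sabs{\dl(N,S^\polar)}$ that lie in the relative interior of one fixed face of $S^\polar$ and share the value of every $E^u$. Since $t_{u,vw}(p(a))$ and $t_{u,vw}(a)$ differ by a quantity not depending on $vw$, they attain their minima over $vw$ on the same edge set; as the domain point $a$ is tight triangular this common minimum of $t_{u,vw}(a)$ is $0$, so by \eqref{eq:expo:Eu}
\[
  E^u(\pi(a))=\bigl\{vw\in\Esetn\bigm| t_{u,vw}(a)=0\bigr\},
\]
the set of rooted triangles at $u$ that are tight for $a$. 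On the subcomplex $\dl(S^\poOps,\bar\Cplx(P^\polar))$ this set is constant on the relative interior of each face, the faces being cut out by the facet inequalities of $P^\polar$ together with the rooted-triangle inequalities (this underlies Remark~\ref{rem:sbcplx-bd-tt}); and $\pi$, being projective and injective, sends the relative interior of a face $G$ of $P^\polar$ into the relative interior of a single face of $S^\polar$ (polar duality and the face relation $S\le P$ of Lemma~\ref{lem:prop-blockingpoly} pinning down which face). Thus $\pi(\relint G)$ lies in a single cell $X$ of $\TiltCplx^\circ$; applying $\sigma$ and the same two invariants to $X$ gives the reverse inclusion, whence $\pi(\relint G)=X$ and $\pi(G)=\close{X}\in\TiltCplx$. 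Hence $G\mapsto\pi(G)$ is a bijection $\dl(S^\poOps,\bar\Cplx(P^\polar))\to\TiltCplx$, inclusion-preserving because $\pi$ is, which is the asserted combinatorial equivalence.

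The step I expect to be the main obstacle is the fact, used twice above, that on the tight-triangular subcomplex the face of $P^\polar$ carrying a point in its relative interior is determined by the pair (face of $S^\polar$, set of tight rooted-triangle inequalities) --- equivalently, that passing from a tight-triangular inequality of $P$ to its restriction to $S$ plus the list of its tight rooted triangles loses no information about which face of $P^\polar$ one occupies. Proving this, and the companion statement (needed for surjectivity of $\pi$) that every tight-triangular inequality is valid and good for $P$, is where one must engage with the full facet classification of $P_n$ due to Naddef \& Rinaldi --- that every facet of $P_n$ other than a non-negativity facet is a degree facet or a tight-triangular facet, and that $S$ is the intersection of the degree facets --- rather than with the abstract polarity formalism of Section~\ref{sec:prelim} alone.
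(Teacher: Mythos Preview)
Your proposal is correct and follows essentially the same route as the paper. You define $\pi$ by the same formula~\eqref{eq:def-pi-of-a}, construct its inverse exactly as the paper's $\varphi$ in~\eqref{eq:phi-def} (your $\hat b$ is the paper's $\mkttpt(b)=c(b)$, and your normalisation $\hat b/(\hat b\iprod z-1)$ coincides with $c(b)/\gamma(b)$ because $\gamma(b)=c(b)\iprod z-1$ once one uses $Dz=\One$), and you correctly identify the decisive fact --- that a good face of $P$ is determined by its set of cycles together with its set of tight rooted triangles --- which is the paper's Lemma~\ref{lem:face-of-gtsp-uniq-determ-ham-shtcts}.

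The one noteworthy difference is in how the face correspondence is closed up. The paper first shows $\varphi(\relint F)\subset\relint\Phi(F)$ (Lemma~\ref{lem:phi-F-G}) and then invokes a topological argument (Borsuk--Ulam, Lemma~\ref{lem:phi-Phi-F-G}) to upgrade this to $\varphi(F)=\Phi(F)$. You instead argue both inclusions $\pi(\relint G)\subset X$ and $\sigma(X)\subset\relint G$ directly from the invariants ``face of $S^\polar$'' and ``$E^u$'s'', using Lemma~\ref{lem:face-of-gtsp-uniq-determ-ham-shtcts} twice. This is slightly cleaner and avoids the topological detour; the paper itself remarks that the Borsuk--Ulam step can be replaced by more pedestrian polyhedral reasoning. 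Two small points of wording: your phrase ``the faces being cut out by the facet inequalities of $P^\polar$ together with the rooted-triangle inequalities'' is imprecise (the triangle inequalities are not face-defining for $P^\polar$ itself) --- the correct reason $\{vw:t_{u,vw}(a)=0\}$ is constant on $\relint G$ is precisely Lemma~\ref{lem:feasible-shortcut}; and $\sabs{\dl(N,S^\polar)}$ is not a polytope but a compact polyhedral set, which is all you need for the continuous-bijection-on-compacta argument.
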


\begin{remark}\label{rem:vertices-bij}
  Let us speak of a TT-vertex of $P^\polar$, if the point is TT, or, equivalently, if the vertex corresponds to a TT-facet of $P$.  Similarly, let us call a
  TT-vertex of $P^\polar$ an NR-vertex (non-NR-vertex), if the corresponding facet of $P$ is an NR-facet (non-NR-facet, resp.).
  Theorems \ref{thm:expo:char} and~\ref{thm:expo:inj} imply that the NR-vertices of $P^\polar$ are in bijection with the vertices of $\dl(N, \Stspn^\polar)$ via
  $\varphi$, while the non-NR vertices of $\Gtspn^\polar$ are mapped to non-vertex points by $\varphi$.
\end{remark}

\subsection{Parsimonious property of relaxations and the ridge graph}

Given a system $Bx \ge b$ of linear inequalities which are valid for $S$, one may ask how the minimum value of a linear function $x\mapsto c^\Tp x$ changes if
either degree inequalities or degree equations are present, in other words, whether the following inequality is strict:
\begin{subequations}\label{eq:parsi-ieq}
  \begin{eqnarray}
    \label{eq:parsi-ieq:ge}
    \min \bigl\{ c^\Tp x &\bigm|& Bx \ge b\text{, }\ \delta_v\iprod x \ge 1\, \forall v\text{, }\ x \ge 0 \bigr\}
    \\\notag\le\\
    \label{eq:parsi-ieq:eq}
    \min \bigl\{ c^\Tp x &\bigm|& Bx \ge b\text{, }\ \delta_v\iprod x  =  1\, \forall v\text{, }\ x \ge 0 \bigr\}
\end{eqnarray}
\end{subequations}
We say that the system of linear inequalities and equations in~\eqref{eq:parsi-ieq:ge},
\begin{equation}\label{eq:R_B}
  \begin{aligned}[c]
      Bx &\ge b\\
      \delta_v\iprod x &\ge 1\; \forall v \in \Vsetn\\
      x &\ge 0
    \end{aligned}
\end{equation}
is a \textit{relaxation of $S$}.  Such a relaxation is said to have the \textit{parsimonious property} \cite{GoemBertim93} if equality holds in
\eqref{eq:parsi-ieq} for all $c$ satisfying the triangle inequality.

Goemans \cite{Goemans95} raised the question whether all relaxations of $S$ consisting of inequalities defining NR-facets of $P$ (in other words, they
are facet-defining for $P$ and for $S$) have the parsimonious property.  

The parsimonious property had earlier been proved to be satisfied for the relaxation consisting of all inequalities defining facets of $P$ by Naddef \&
Rinaldi \cite{NadRina91}, in other words: optimizing an objective function satisfying the triangle inequality over $P$ yields the same value as optimizing
over $S$.
The parsimonious property has been verified by Goemans and Bertsimas \cite{GoemBertim93} for the relaxation consisting of all non-negativity inequalities $x_e \ge
0$, $e\in\Esetn$, and all so-called subtour elimination inequalities. For every $S\subsetneq \Vsetn$ with $\abs S \ge 2$, the corresponding \textit{subtour
  elimination inequality}
\begin{equation}\label{eq:subtourelimieq}
  \sum_\sstack{uv\in\Esetn\\\abs{\{u,v\}\cap S}=1} x_{uv} \;\;\ge\; 2,
\end{equation}
is valid and facet-defining for $S$ (whenever $n\ge 5$) \cite{GroePadb79a,GroePadb79b}.

To our knowledge, the first example of a relaxation of $S$ which does not have the parsimonious property is due to Letchford \cite{Let05PCv}.  While the
inequalities which he used did not define a facet of $S$ or of $P$, in \cite{OswReiThe05,OsRlTheis07}, a family of inequalities defining facets of
$P$ was given which does not have the parsimonious property.

\paragraph{As an application of Theorems \ref{thm:expo:char} and~\ref{thm:expo:inj},}
we give a necessary condition for a relaxation of $S$ consisting of inequalities defining NR-facets of $P$ to have the parsimonious property.
The condition is based on connectivity properties of the ridge graph of $P$.
Recall that the \textit{ridge graph} $\mathcal G$ of $P$ is the graph whose vertex set consists of all facets of $P$ where two facets are adjacent if their
intersection has dimension $\dim P-2$, i.e., it is a \textit{ridge.}
We will relate this relaxation to the induced subgraph $\mathcal G_B$ of the ridge graph of $P$ which is obtained if all vertices corresponding to the
facets defined by inequalities in $\mathcal R_B$ are deleted.

\begin{theorem}\label{thm:parsi-result}%
  Suppose $Bx\ge b$ consists of inequalities defining NR-facets of $P$.
  If the relaxation~\eqref{eq:R_B} of $S$ has the parsimonious property, then every connected component of $\mathcal G_B$ contains vertices corresponding to
  NR-facets of $P$.
\end{theorem}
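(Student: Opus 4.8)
The plan is to prove the contrapositive. Write $R_B := \{x\in\RR^\Esetn \mid Bx\ge b,\ \delta_v\iprod x\ge 1\ \forall v\in\Vsetn,\ x\ge 0\}$ for the relaxation polyhedron of \eqref{eq:R_B}, put $S_B := R_B\cap\aff S$, and let $W$ be the set of facets of $P$ defined by the rows of $Bx\ge b$; these are exactly the vertices deleted from the ridge graph $\mathcal G$ to form $\mathcal G_B$. The first step is to restate the parsimonious property geometrically. Since the rows of $B$ are TT (hence $\ge 0$) and the $\delta_v$ are $\ge 0$, the polyhedron $R_B$ has recession cone $\RR_+^\Esetn$, the same as $P$; thus for every metric $c$ (in particular $c\ge 0$) the minimum of $c^\Tp x$ over $R_B$ is finite and attained on a face $F_c$, while the minimum of $c^\Tp x$ over $S_B = R_B\cap\aff S$ is $\ge$ that value, with equality exactly when $F_c\cap\aff S\ne\emptyset$. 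It follows that \eqref{eq:parsi-ieq} holds with equality for all metric $c$ if and only if $F_c\cap\aff S\ne\emptyset$ for every metric $c$. (When $Bx\ge b$ is the full list of TT-facet inequalities, $R_B=P$, $S_B=S$, and this is the parsimony theorem of Naddef \& Rinaldi \cite{NadRina91}.) So, assuming parsimony, $F_c$ meets $\aff S$ for every metric $c$, and I must show every component of $\mathcal G_B$ contains an NR-facet.

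The second step imports Theorems~\ref{thm:expo:char} and~\ref{thm:expo:inj} to read the ridge graph through the rotation complex. By Theorem~\ref{thm:expo:inj}, the complex $\dl(S^\poOps,\bar\Cplx(P^\polar))$ of bounded TT-faces of $P^\polar$ is, via $\pi$, combinatorially equivalent to $\TiltCplx$; by Theorem~\ref{thm:expo:char}(\ref{thm-enum:expo:char:TTfan}), $\TiltCplx=\mathcal A\vee\TTcplx$ subdivides $\mathcal A:=\dl(N,S^\polar)$, with $\TTcplx$ the flat TT-fan. Under these identifications (using Lemma~\ref{lem:prop-blockingpoly} and Remark~\ref{rem:vertices-bij}): the vertices of $\TiltCplx$ are the TT-facets of $P$; those also vertices of $\mathcal A$ are the NR-facets; the rest, in relative interiors of positive-dimensional faces of $\mathcal A$, are the non-NR-facets; and the edges of $\TiltCplx$ are ridges of $P$ between two TT-facets. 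Since $\TiltCplx$ subdivides $\mathcal A$, any vertex of $\TiltCplx$ in a face $F$ of $\mathcal A$ is joined, within $F$ and through a chain of $\TiltCplx$-edges whose interior vertices are all non-NR-facets (hence not in $W$, as $W$ consists of vertices of $\mathcal A$), to every NR-facet that is a vertex of $F$. Thus a TT-facet is cut off in $\mathcal G_B$ from all NR-facets only if it is \emph{trapped}: every face of $\mathcal A$ reachable from it through $\TiltCplx$ has all its vertices in $W$. A separate, parsimony-free argument deals with the facets that are not TT-facets — the non-negativity facets (not vertices of $P^\polar$) and the degree facets (the vertices $\delta_u$ of the simplex face $S^\poOps$ of $P^\polar$, which are mutually ridge-adjacent and each joined in $P^\polar$ to a TT-vertex off $S^\poOps$) — showing that, for $n\ge 5$, every component of $\mathcal G_B$ meeting these also contains a TT-facet avoiding $W$. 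Hence a component $K$ with no NR-facet consists, apart from possibly some degree or non-negativity facets, of trapped non-NR-facets.

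To finish, I would turn a trapped component into a metric violating parsimony. Through $\TiltCplx\cong\dl(S^\poOps,\bar\Cplx(P^\polar))$ and $\TiltCplx=\mathcal A\vee\TTcplx$, a trapped $K$ corresponds to a part of the flat TT-fan $\TTcplx$ lying over faces of $\mathcal A$ whose vertices are all in $W$. Choose a metric $c$ whose orthogonal projection $p(c)$ is a relative-interior point of a maximal cone of $\TTcplx$ meeting this part; such $c$ exists by Theorem~\ref{thm:expo:char}(\ref{thm-enum:expo:char:TTfan}) together with Lemma~\ref{lem:TTfans}, which identify $\TTcplx$ with the image under $p$ of the TT-fan inside the metric cone. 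Rotating the valid $S$-inequality attached to $p(c)$ — i.e.\ adding the combination $q:=c-p(c)$ of the $\delta_u$, which is precisely how $c$ is recovered from $p(c)$ — one checks that the faces of $P$ in $\Pfaces(p(c))$ lie in the trapped region, and that the face $F_c$ of $R_B$ on which $c^\Tp x$ is minimized is one of the faces ``beyond'' $P$ cut off by the removed facets, so that $F_c\cap\aff S=\emptyset$; this contradicts the reformulation from the first step.

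\textbf{The main obstacle.} The rotation machinery of Section~\ref{sec:expo} is built for $P$, whereas the parsimonious property concerns the coarse relaxation $R_B$, which has far fewer facets and extra vertices outside $\aff S$. The delicate point is the last step: proving that ``$F_c$ is a face of $R_B$ disjoint from $\aff S$'' is equivalent to ``$p(c)$ sits in a face of $\TiltCplx$ all of whose NR-vertices lie in $W$'' — equivalently, that deleting the $W$-facets from $\mathcal G$ matches deleting the named vertices of $\mathcal A$ from $\TiltCplx$, with no ridge through a degree or non-negativity facet reconnecting a trapped region. This is where Theorem~\ref{thm:expo:inj} enters in an essential way.
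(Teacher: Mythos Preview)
Your approach is the contrapositive of the paper's: you reformulate parsimony geometrically as ``for every metric $c$, the $c$-minimizing face $F_c$ of $R_B$ meets $\aff S$'' (this step is correct), and then try to manufacture a violating metric from a component of $\mathcal G_B$ lacking NR-facets. The paper instead proves the direct statement: assuming parsimony, it shows that from any non-NR vertex of $P^\polar$ there is a path in the 1-skeleton of $\dl(S^\poOps,\bar\Cplx(P^\polar))$ to an NR-vertex avoiding the $B$-vertices.

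The paper's bridge from parsimony to polar geometry is a Farkas-lemma argument (Lemma~\ref{lemma:GTSP:nonNR-not-writable-by-Bieqs-only}): if the relaxation is parsimonious, then no non-NR facet inequality $(c,\gamma)$ can be written as a non-negative combination of rows of $B$ adjusted by degree equations. Transported through $\varphi$ into the rotation complex, this says that a non-$B$ vertex $c$ of $\mathcal D_F$ cannot lie in $\conv B_F$ (Claim~\ref{claim:2}). A hyperplane separating $c$ from $\conv B_F$ then yields a strictly $p$-increasing walk in the 1-skeleton of $\mathcal D_F$; since this walk cannot revisit vertices it must eventually hit $\partial F$, and induction on $\dim F$ finishes (Claim~\ref{claim:1}).

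Your proposal has two genuine gaps. First, the connectivity assertion in your second step --- that any $\TiltCplx$-vertex in a face $F$ of $\mathcal A$ is joined to \emph{every} vertex of $F$ by a $\TiltCplx$-path inside $F$ whose intermediate vertices are all non-NR --- fails for general polytopal subdivisions (a square cut by a single diagonal already breaks it), and you give no reason why the common refinement with the TT-fan avoids this. This undermines your reduction to ``trapped'' components. Second, and more seriously, the final step --- producing from a trapped component a metric $c$ with $F_c\cap\aff S=\emptyset$ --- is, as you yourself flag, not carried out. The obstacle is real: the rotation complex encodes the face lattice of $P$, not of the much coarser polyhedron $R_B$, so knowing which faces of $P$ lie in $\Pfaces(p(c))$ says nothing directly about where $c^\Tp x$ is minimized over $R_B$. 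It is precisely the LP-duality step (Farkas) that links optimization over $R_B$ to convex-hull membership among the $B$-vertices inside $S^\polar$; without an analogue of that step, your route does not close.

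A minor point: by the paper's definition, $\mathcal G_B$ removes \emph{all} facets appearing in~\eqref{eq:R_B}, including degree and non-negativity facets, so its vertex set is exactly the TT-facets not in $B$. Your identification of $W$ with only the $B$-facets, and the ensuing aside about handling degree and non-negativity facets separately, are therefore misaimed.
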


Thus, we link the optimization view given by the parsimonious property question with combinatorial properties of the a polyhedral complex $\Cplx(P)$, or, more
precisely, of $\dl(S^\poOps,\bar\Cplx(P^\polar))$.  In the proof, Theorem~\ref{thm:expo:inj} is used to ``flatten'' the latter complex, which then allows to using
a separating-hyperplane argument for constructing a path in the ridge graph.



\newcommand{\mkttpt}{\vartheta}
\newcommand{\mkttieq}{\tilde\vartheta}
\newcommand{\sct}{{s}}



\section{Proofs for Theorems~\ref{thm:expo:char} and~\ref{thm:expo:inj}}\label{sec:tsp}

In~\ref{ssec:tsp:prelim}, we will need to discuss some properties of Symmetric and Graphical Traveling Salesman polyhedra.  Most of them are generalizations of
facts in the seminal papers by Naddef \& Rinaldi \cite{NadRina91,NadRina93}.  The proof of Theorems \ref{thm:expo:char} and~\ref{thm:expo:inj} then takes up
Subsections \ref{ssec:descr-cplx} and~\ref{ssec:piinv}.

As said before, we assume in the whole section that $S=S_n$ and $P=P_n$ with $n\ge 5$, because we require the technical fact that non-negativity inequalities
$x_e \ge 0$, for an $e\in\Esetn$, define facets of $S$, which is true if and only if $n\ge 5$, see \cite{GroePadb79a,GroePadb79b}.

\subsection{Preliminaries on connected Eulerian multi-graph polyhedra}\label{ssec:tsp:prelim}

Naddef \& Rinaldi \cite{NadRina93} proved that every facet of $S$ is contained in precisely $n+1$ facets of $P$: the $n$ degree facts and one additional facet.
This fact and its generalizations are useful for our purposes.  For the sake of completeness, we will sketch its proof, and introduce some of the tools for the
proofs of our main theorems along the way.

First we set up some notations.  Let $D$ be the $\Vsetn\times\Esetn$-matrix whose rows are the $\delta_u^\Tp$, $u\in\Vsetn$.  Recall from Section~\ref{def:p}
that $p$ is the orthogonal projection from $\RR^\Esetn$ onto $\Lsp = \ker D$.  Note that the orthogonal complement $\Lsp^\bot = \ker p$ of $L$ is equal to $\img
D^\Tp = \{D^\Tp \xi \mid \xi \in \RR^\Vsetn\}$, the space of all linear combinations of the $\delta_u$.

In the following lemma, we summarize basic facts about tight triangularity.

\begin{lemma}\label{lem:use-TT-form}\mbox{}%
  \begin{aenumeratei}
  \item A metric inequality which is valid for $S$ is also valid for $P$.
  \item An inequality defining a good face of $P$ is metric.
  \item An inequality defining a good face $F$ of $P$ is TT if and only if $F$ is not contained in a degree facet.
  \item\label{lem:use-TT-form:nononneg-inherit} If a face $F$ of $P$ is good, then $S\cap F$ is also good.
  \item\label{lem:use-TT-form:codim-faces} Let the TT inequality $a\iprod x \ge 1$ be valid for $P$.  If it defines a face of co-dimension $c$ of $S$, then it
    defines a face of co-dimension at most $c$ of $P$.
  \item\label{lem:use-TT-form:TT-rep} For every $a\in\RR^\Esetn$ there is a unique TT representative in the co-set $a+\Lsp^\bot = \{ a + D^\Tp \xi \mid \xi
    \in \RR^\Vsetn\}$.  More precisely, we can obtain a unique $\lambda(a)\in \RR^\Vsetn$ for which $a-D^\Tp\lambda(a)$ is TT by letting
    \begin{equation}\label{eq:def-lambda-u}%
      \lambda_u(a) := \min_{v,w\ne u} t_{u,vw}(a)
    \end{equation}
  \end{aenumeratei}
\end{lemma}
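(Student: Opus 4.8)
The plan is to reduce everything to two identities and two operations on Eulerian multigraphs. Write $g_{u,vw}:=\chi^{vu}+\chi^{uw}-\chi^{vw}\in\RR^\Esetn$ for a rooted triangle $u,vw$, so that $t_{u,vw}(a)=a\iprod g_{u,vw}$; since only the edges $vu,uw$ meet $u$ (each with coefficient $1$) and $g_{u,vw}$ is orthogonal to $\delta_v$ and to $\delta_w$, one gets $D g_{u,vw}=\mathbf e_u$, the $u$-th unit vector of $\RR^\Vsetn$, and in particular $\delta_u\iprod g_{u,vw}=1$. Part~(f) follows at once: along the coset $a+\Lsp^\bot=\{a-D^\Tp\xi\}$ we have $t_{u,vw}(a-D^\Tp\xi)=t_{u,vw}(a)-\xi_u$, with no dependence on $v,w$, so $a-D^\Tp\xi$ is metric iff $\xi_u\le\lambda_u(a)$ for all $u$, and is moreover TT iff $\xi_u=t_{u,vw}(a)$ for some $v,w$, i.e.\ iff $\xi=\lambda(a)$; uniqueness uses that $D^\Tp$ is injective because the $\delta_u$ are independent. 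The two operations are: \emph{shortcutting} a closed Eulerian walk of a connected Eulerian multigraph on $\Vsetn$ to a Hamilton cycle, by passing to the subsequence of first occurrences of the vertices --- this does not increase the $a$-value when $a$ is metric, by the iterated triangle inequality, and one may prescribe the starting edge, which then survives in the cycle; and \emph{edge splitting}, i.e.\ replacing an edge $vw$ of such a multigraph $H$ by the path $v,u,w$, which again gives a connected Eulerian multigraph, with characteristic vector $\chi^H+g_{u,vw}$. From shortcutting and the description $P=\conv\{\chi^H\mid H\text{ connected Eulerian on }\Vsetn\}+\RR_+^\Esetn$ of \cite{CornFonluNadd} we get $\min_{x\in P}a\iprod x=\min_{x\in S}a\iprod x$ for metric $a$; from edge splitting (or the decomposition $P=\RR_+^\Esetn\cap(S+\cone\{g_{u,vw}\})$ of \cite{TheisStspGtspMinkowski}) we get that $x\in P$ with $x_{vw}>0$ implies $x+\eps g_{u,vw}\in P$ for all small $\eps>0$.

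Part~(a): if $a$ is metric and $a\iprod x\ge\alpha$ is valid for $S$, write $x\in P$ as $\sum_i\mu_i\chi^{H_i}+r$ with $\mu_i\ge0$, $\sum_i\mu_i=1$, $r\ge0$; shortcutting gives $a\iprod\chi^{H_i}\ge\alpha$, and $a\iprod r\ge0$ since a metric vector is nonnegative, hence $a\iprod x\ge\alpha$. Part~(b): a good face $F$ of $P$ is, for every edge $vw$, not contained in $\{x\mid x_{vw}=0\}$, hence contains a point $x$ with $x_{vw}>0$; for a defining inequality $a\iprod x\ge\alpha$ of $F$, validity at $x+\eps g_{u,vw}$ forces $t_{u,vw}(a)=a\iprod g_{u,vw}\ge0$, and as $u,vw$ is arbitrary $a$ is metric.

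Part~(c), where $a$ is metric by~(b) and $\alpha:=\min_{x\in P}a\iprod x$. \emph{If $F$ lies in no degree facet, then $a$ is TT:} suppose not, so $\lambda_u(a)>0$ for some $u$. By part~(f), $a':=a-D^\Tp\lambda(a)$ is TT, hence metric, hence valid for $S$ (it is $a$ shifted by degree equations, which hold on $S$) and therefore, by~(a), valid for $P$; putting $\beta:=\min_{x\in P}a'\iprod x$ and noting that a Hamilton cycle of minimum $a'$-value attains $\beta$ and has all $\delta_{u'}$-values $1$, we get $\alpha=\beta+\sum_{u'}\lambda_{u'}(a)$. Hence, for $x\in P$,
\[
  a\iprod x-\alpha=\bigl(a'\iprod x-\beta\bigr)+\sum_{u'}\lambda_{u'}(a)\bigl(\delta_{u'}\iprod x-1\bigr),
\]
a sum of nonnegative terms; equality forces the $u$-th term to vanish, and $\lambda_u(a)>0$ then gives $\delta_u\iprod x=1$, so $F$ lies in the degree facet at $u$ --- contradiction. \emph{If $a$ is TT, then $F$ lies in no degree facet:} for any $u$ pick $v,w$ with $t_{u,vw}(a)=0$ and a relative interior point $x^*$ of $F$, which is strictly positive since $F$ is good; then $x^*+\eps g_{u,vw}\in P$ still satisfies $a\iprod x=\alpha$, so lies in $F$, while $\delta_u\iprod(x^*+\eps g_{u,vw})=\delta_u\iprod x^*+\eps>1$.

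Part~(d): by~(b), a defining inequality $a\iprod x\ge\alpha$ of the good face $F$ (so $\alpha=\min_{x\in P}a\iprod x$) is metric, whence $\min_{x\in S}a\iprod x=\alpha$ and $S\cap F\ne\emptyset$; fix an edge $vw$ and pick $x\in F$ with $x_{vw}>0$, $x=\sum_i\mu_i\chi^{H_i}+r$. From $a\iprod x=\alpha$, $a\iprod\chi^{H_i}\ge\alpha$, $a\iprod r\ge0$ one gets $a\iprod\chi^{H_i}=\alpha$ for all $i$ and $a\iprod r=0$, hence $a_er_e=0$ for every $e$. If some $(H_i)_{vw}\ge1$ set $H:=H_i$; otherwise $r_{vw}>0$, whence $a_{vw}=0$, and for a Hamilton cycle $C_0$ of minimum $a$-value $\alpha$ set $H:=\chi^{E(C_0)}+2\chi^{vw}$, a connected Eulerian multigraph of $a$-value $\alpha$ containing $vw$. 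Now shortcut an Euler tour of $H$ started along $vw$ to a Hamilton cycle $C$: then $vw\in E(C)$ and $a\iprod\chi^{E(C)}\le\alpha$, hence $=\alpha$, so $\chi^{E(C)}\in S\cap F$ with $(vw)$-coordinate $1$; thus $S\cap F$ is good. Part~(e): $a$ is metric by~(b), so $F=\{x\in P\mid a\iprod x=1\}$ and $G:=S\cap F$ are the minimizing faces of $P$ and of $S$; $G$ is good by~(d) (applied to $F$, once one knows $F$ is good), so a relative interior point $x^{**}$ of $F$ is strictly positive. For each $u$ pick a rooted triangle $u,v_uw_u$ with $t_{u,v_uw_u}(a)=0$ and set $g_u:=g_{u,v_uw_u}$; then $a\iprod g_u=0$, so $x^{**}+\eps g_u\in F$ and $g_u\in\direc F$. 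Since $D g_u=\mathbf e_u$, the $g_u$, $u\in\Vsetn$, are linearly independent modulo $\ker D=\Lsp\supseteq\direc G$, so $\dim F\ge\dim G+n=\dim G+(\dim P-\dim S)$, which is exactly $\codim_P F\le\codim_S G$. The routine part throughout is the linear algebra around $t_{u,vw}(a)=a\iprod g_{u,vw}$ and $D g_{u,vw}=\mathbf e_u$; the hardest parts will be the multigraph surgery --- shortcutting while retaining a prescribed edge, as needed in~(d) --- and, for~(e), verifying that the face $F$ around which we perturb is good (equivalently, that a TT inequality valid for $P$ normalized so that $\min_{x\in P}a\iprod x=1$ defines a good face), which is the one place the argument must genuinely use the structure of connected Eulerian multigraphs rather than just linear algebra.
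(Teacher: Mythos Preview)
Your approach is the paper's own: its sketch also reduces everything to the shortcut operation and its inverse (your ``edge splitting''), and dismisses (f) as a direct computation. Your arguments for (a)--(d) and (f) are correct elaborations of that sketch; the case split in (d), where you preserve a prescribed edge through the shortcutting, is exactly what the paper means by ``carefully selecting the edge $vw$''. One small slip: at the start of (e) you invoke (b) to get that $a$ is metric, but (b) presupposes a good face --- here $a$ is metric simply because it is TT.

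The gap you flag in (e), however, is genuine, and not just in your write-up: item (e) as stated is actually \emph{false}. For $n=5$, weight the edges of the pentagon $C_0=12345$ by $1$ and the five chords by $2$. This $a$ is TT --- for each $u$ the unique tight rooted triangle is $u,vw$ with $vw$ the chord joining the two $C_0$-neighbours of $u$ --- yet every connected Eulerian multigraph with $k$ edges has $a$-cost at least $k$, so $C_0$ is the unique point of $P$ of minimum $a$-cost $5$. Hence $F=G=\{\chi^{E(C_0)}\}$ and $\codim_P F = 10 > 5 = \codim_S G$. None of the tight triangles sit on an edge of $C_0$, so neither your perturbation nor the paper's ``for each vertex $u$ a shortcut $\sct_{u,vw}$'' can get off the ground. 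The statement becomes true --- and your argument goes through --- once one adds the hypothesis that $F$ is good, which holds automatically whenever $G$ is good (since $G\subset F$); this is the case in the paper's only use of (e), the proof of Theorem~\ref{thm:simplex}(a), where the face of $S$ in question is $S$ itself.
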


\begin{wrapfigure}{r}{0mm}
  \parbox{21mm}{\scalebox{.5}{\input{shortcut-diff-c1-h.pstex_t}}\vspace*{-3ex}}
\end{wrapfigure}
Given a vertex $u$ and an edge $vw$ not incident to $u$, a \textit{shortcut} is a vector $\sct_{u,vw}:=\chi^{vw}-\chi^{vu}-\chi^{uw}\in\RR^\Esetn$.

\begin{proof}[Proofs for Lemma~\ref{lem:use-TT-form} (sketches).]
  The proofs of these facts are easy generalizations of arguments which can be found in \cite{NadRina93}.  

  The key ingredient in (a--c) is the \textit{shortcut argument} which Naddef \& Rinaldi pioneered in \cite{NadRina93}.  Let $x\in\ZZ_+^\Esetn$ represent the
  edge multi-set of a connected Eulerian multi-graph $H$ with vertex set $\Vsetn$.  If $H$ is not a cycle, i.e., if $H$ has a vertex $u$ of degree four or more,
  then one can find an edge $vw$ such that $vu$ and $vw$ are in $H$, and $H' := H\cup\{vw\}\setminus\{vu,vw\}$ is still a connected Eulerian multi-graph;
  cf.~the picture on the right.
  If $y$ represents its edge multi-set, then $y = x + \sct_{u,vw}$.  This gives (a), the implication ``$\Rightarrow$'' in (c), and by carefully selecting the
  edge $vw$, (d).  Similarly, one can subtract a shortcut from an $x$, which gives (b), the other direction in (c), and, by taking for each vertex $u$ a
  shortcut $\sct_{u,vw}$, implies~(e).
  
  Item~(f) is straightforward computation.
\end{proof}


We now prove the important theorem of Naddef \& Rinaldi.

\begin{theorem}[\cite{NadRina93}]\label{thm:simplex}\mbox{}%
  \begin{aenumeratei}
  \item If a facet $G$ of $P$ contains $S$, then $G$ is a degree facet.
  \item Let $F$ be a good facet of $S$.  There exists a \textit{unique} facet $G$ of $P$ with $F = G\cap S$.
  \end{aenumeratei}
\end{theorem}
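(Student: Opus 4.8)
The plan is to treat the two assertions separately, using throughout that $\aff S=z+\Lsp$, so that any valid inequality of $P$ which is tight on all of $S$ has left‑hand side in $\Lsp^\bot=\img D^\Tp$, together with the tight‑triangular facts collected in Lemma~\ref{lem:use-TT-form}. For (a), I would start from a facet $G=\{x\in P\mid a\iprod x=\alpha\}$ with $S\subseteq G$. First, $G$ is good: otherwise $G$, hence $S$, would lie in a non‑negativity facet $\{x_e=0\}$, which is impossible because $S$ has vertices $\chi^{E(C)}$ using $e$. So by Lemma~\ref{lem:use-TT-form}(b) the inequality $a\iprod x\ge\alpha$ is metric. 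Tightness on $\aff S$ lets us write $a=D^\Tp\xi=\sum_u\xi_u\delta_u$, and evaluation on any cycle (where every $\delta_u$ takes value $1$) gives $\alpha=\sum_u\xi_u$. A short computation gives $t_{w,vv'}(\sum_u\xi_u\delta_u)=\xi_w$ for every rooted triangle, so metricity is equivalent to $\xi\ge 0$; hence $a\iprod x\ge\alpha$ is the non‑negative combination $\sum_u\xi_u(\delta_u\iprod x-1)\ge 0$ of degree inequalities, and (writing $D_u:=\{x\in P\mid\delta_u\iprod x=1\}$) $G$ is the intersection of the degree facets $D_u$ over the indices $u$ with $\xi_u>0$. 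Finally, at most one $\xi_u$ can be positive: if $\xi_{u_1},\xi_{u_2}>0$ with $u_1\ne u_2$, then $G\subseteq D_{u_1}\cap D_{u_2}$, which is a \emph{proper} face of $D_{u_1}$ — the point obtained from a Hamilton cycle by doubling one of its edges at $u_2$ whose other end is not $u_1$ lies in $D_{u_1}\setminus D_{u_2}$ — forcing $\dim G\le\dim P-2$, contradicting that $G$ is a facet. Thus $G$ is a degree facet.

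For the existence half of (b), let $F$ be a good facet of $S$ and pick a valid inequality $b\iprod x\ge\beta$ of $S$ defining $F$. Replacing $b$ by its unique TT representative $a:=b-D^\Tp\lambda(b)$ from Lemma~\ref{lem:use-TT-form}(\ref{lem:use-TT-form:TT-rep}) changes $b\iprod x$ only by the additive constant $\sum_u\lambda_u(b)$ on $\aff S$, so $a\iprod x\ge\beta-\sum_u\lambda_u(b)$ still defines $F$. Since $a$ is TT, hence metric, Lemma~\ref{lem:use-TT-form}(a) makes this inequality valid for $P$, and it defines a face $G=\{x\in P\mid a\iprod x=\beta-\sum_u\lambda_u(b)\}$ with $G\cap S=F$ by construction. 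As $a\ge 0$ and $P\subseteq\RR_+^\Esetn$, the right‑hand side equals $\min_{x\in P}a\iprod x\ge 0$; it cannot be $0$, for then $F$ would lie in the non‑negativity facet $\{x_e=0\}\cap S$ for any $e$ with $a_e>0$ (and $a\ne 0$ because $F$ is a proper face), contradicting goodness. Rescaling to right‑hand side $1$, Lemma~\ref{lem:use-TT-form}(\ref{lem:use-TT-form:codim-faces}) yields $\codim_P G\le\codim_S F=1$, and $G\ne P$, so $G$ is a facet.

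For uniqueness, let $G_1,G_2$ be facets of $P$ with $G_i\cap S=F$. Each $G_i$ is good (otherwise $F\subseteq\{x_e=0\}\cap S$ for some $e$, contradicting goodness) and is not a degree facet (otherwise $G_i\supseteq S$, so $G_i\cap S=S\ne F$); since the TT property is preserved under positive scaling, Lemma~\ref{lem:use-TT-form}(c) together with the right‑hand‑side argument above lets us normalize the defining inequality of $G_i$ to $a_i\iprod x\ge 1$ with $a_i$ TT. Both $a_1\iprod x\ge 1$ and $a_2\iprod x\ge 1$ define the same facet $F$ of $S$; since a facet's defining inequality is unique up to positive scaling and addition of a multiple of the equations of its affine hull, $a_1=c\,a_2+D^\Tp\xi$ with $c>0$ and $1=c+\sum_u\xi_u$. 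Now $c\,a_2$ is again TT, and $a_1$ and $c\,a_2$ lie in the same coset modulo $\Lsp^\bot$, so the uniqueness clause of Lemma~\ref{lem:use-TT-form}(\ref{lem:use-TT-form:TT-rep}) forces $a_1=c\,a_2$; then $D^\Tp\xi=0$, hence $\xi=0$ (the $\delta_u$ are linearly independent), hence $c=1$. Therefore $a_1=a_2$ and $G_1=G_2$.

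The step I expect to require genuine thought is the dimension count in (a) that forbids two positive coefficients $\xi_u$: this is the single place where one uses that $G$ is a \emph{facet} and not merely a face containing $S$, and it needs the explicit doubled‑edge multigraph to witness $D_{u_1}\not\subseteq D_{u_2}$. The remaining points — the equivalence ``metric $\Leftrightarrow\xi\ge 0$'' for combinations of the $\delta_u$, and the sign bookkeeping needed to put right‑hand sides into the normalized form required by Lemma~\ref{lem:use-TT-form}(\ref{lem:use-TT-form:codim-faces}) and~(\ref{lem:use-TT-form:TT-rep}) — are routine once Lemma~\ref{lem:use-TT-form} is available, which is where the real content (the shortcut argument) is packaged.
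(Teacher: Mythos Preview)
Your proof is correct. The main difference from the paper is in part~(a): the paper's argument is a two-line contrapositive using Lemma~\ref{lem:use-TT-form}(c) and~(e) --- if $G\supset S$ is not a degree facet, then (being a facet) it is not contained in one, so its defining inequality is TT by~(c); but then~(e) forces $\codim_P G\le\codim_S(S)=0$, a contradiction. You instead bypass (c) and~(e) entirely: from Lemma~\ref{lem:use-TT-form}(b) and $a\in L^\bot$ you compute directly that $t_{w,vv'}(D^\Tp\xi)=\xi_w$, conclude $\xi\ge 0$, and then use the facet hypothesis via an explicit doubled-edge witness to rule out two positive coefficients. This is more hands-on and a little longer, but it has the merit of making the role of the facet hypothesis completely explicit and of not relying on the shortcut-based codimension estimate~(e).

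For part~(b), your uniqueness argument is essentially the paper's (TT representative is unique in its $L^\bot$-coset, Lemma~\ref{lem:use-TT-form}(\ref{lem:use-TT-form:TT-rep})), though you are more careful than the paper in tracking the positive scaling factor $c$ and showing it must equal~$1$. Your existence argument is constructive --- you build the TT inequality and invoke Lemma~\ref{lem:use-TT-form}(a) and~(\ref{lem:use-TT-form:codim-faces}) --- whereas the paper simply appeals to the face lattice (``$G$ exists because $S$ is a face of $P$''), which implicitly uses part~(a) to rule out $G\cap S=S$. Both routes are fine; yours gives the actual facet-defining inequality, at the cost of the sign bookkeeping you flag at the end.
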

\begin{proof}
  \textit{(a). } If $G\supset S$, then $G$ is good by definition.  If $G$ is not equal to a degree facet, then, by Lemma~\ref{lem:use-TT-form}(c), it is
  defined by a TT inequality, which contradicts Lemma~\ref{lem:use-TT-form}(e).
  
  \textit{(b). } Clearly, $G$ exists because $S$ is a face of $P$.  Let $G$ be defined by an inequality $a\iprod x\ge\alpha$.  Then $a$ is TT by
  Lemma~\ref{lem:use-TT-form}(c), hence, by Lemma~\ref{lem:use-TT-form}(f), unique in the set $a + L^\orth$ of all left hand sides of inequalities
  defining the facet $F$ of $S$.
\end{proof}

\subsubsection{Related aspects of the polar polyhedra}

Theorem~\ref{thm:simplex}(b) can be rephrased as follows.  If $a$ is a vertex of $P^\polar$ such that the inequality $a\iprod x\ge1$ defines a facet of $S$,
then $a$ and $\delta_u$, $u\in\Vsetn$, are the vertices of an $n$-simplex which is a face of $P^\polar$.

\begin{remark} \label{rem:sbcplx-bd-tt}%
  By Lemma~\ref{lem:prop-blockingpoly}(b) and Lemma~\ref{lem:use-TT-form}(c), the points of the complex $\dl(S^\poOps,\bar\Cplx(P^\polar))$ are precisely
  the points in $\sabs{\bar\Cplx(P^\polar)}$ which are tight triangular.
\end{remark}


\subsection{Descriptions of the rotation complex}\label{ssec:descr-cplx}

We will now prove Theorem~\ref{thm:expo:char}.
We start by 
proving that the two refinements of $\dl(N,S^\polar)$ defined in (a) and (b) respectively of Theorem~\ref{thm:expo:char} are identical: the one using the flat
TT-fan defined in~\eqref{eq:def-TTcplx} and the one using the sets $E^u(a)$ defined in~\eqref{eq:expo:Eu}.

Let us first verify that the orthogonal projection $p$ maps the TT-fan $\sabs{\TTcplx'}$ bijectively onto $L$.  For this, we define some mappings, based
on~\eqref{eq:def-lambda-u}:
\begin{alignat}{3}
  \notag
  \lambda_u\colon& \RR^\Esetn \to \RR \colon&& a \mapsto \min_{v,w\ne u} t_{u,vw}(a), \\
  \label{eq:def-lambda}
  \lambda\colon& \RR^\Esetn \to \RR^\Vsetn \colon&& a \mapsto (  \lambda_1(a),\dots,\lambda_n(a)  )^\Tp,\\
  \notag
  \mkttpt\colon&\RR^\Esetn\to\RR^\Esetn\colon&& a \mapsto a - D^\Tp\lambda(a),\\
  \notag
  \mkttieq\colon&\RR\times\RR^\Esetn\to\RR\times\RR^\Esetn\colon&& (\alpha,a) \mapsto (\alpha - \One\iprod\lambda(a), \mkttpt(a)).
\end{alignat}

\begin{lemma}\label{lem:TTfans}
  The mappings $p\colon \sabs{\TTcplx'} \to L$ and $\restr{\mkttpt}{L}\colon L \to \sabs{\TTcplx'}$ are inverses of each other.
\end{lemma}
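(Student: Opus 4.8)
The plan is to base the whole argument on one linear identity. For every rooted triangle $u,vw$ and every $\xi\in\RR^\Vsetn$ one has
\begin{equation*}
  t_{u,vw}(D^\Tp\xi) \;=\; \tfrac12(\xi_v+\xi_u)+\tfrac12(\xi_u+\xi_w)-\tfrac12(\xi_v+\xi_w) \;=\; \xi_u,
\end{equation*}
since the $\{i,j\}$-coordinate of $D^\Tp\xi=\sum_w\xi_w\delta_w$ equals $\tfrac12(\xi_i+\xi_j)$ by the definition of the $\delta_u$. As $a\mapsto t_{u,vw}(a)$ is linear, this gives $t_{u,vw}(a+D^\Tp\xi)=t_{u,vw}(a)+\xi_u$, hence $\lambda_u(a+D^\Tp\xi)=\lambda_u(a)+\xi_u$ by \eqref{eq:def-lambda-u}, i.e.\ $\lambda(a+D^\Tp\xi)=\lambda(a)+\xi$. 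Substituting into the definition of $\mkttpt$ yields $\mkttpt(a+D^\Tp\xi)=(a+D^\Tp\xi)-D^\Tp(\lambda(a)+\xi)=\mkttpt(a)$, so $\mkttpt$ is constant on each coset of $L^\bot=\img D^\Tp$. Writing $a=p(a)+(a-p(a))$ with $a-p(a)\in L^\bot$, this means $\mkttpt(a)=\mkttpt(p(a))$ for all $a\in\RR^\Esetn$; this is the observation that makes the lemma essentially formal.

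First I would check that the two maps land where they should. For $p$ there is nothing to do. For $\restr{\mkttpt}{L}$, Lemma~\ref{lem:use-TT-form}(f) states exactly that $\mkttpt(b)=b-D^\Tp\lambda(b)$ is TT for every $b$, so $\mkttpt(b)\in\sabs{\TTcplx'}$, and thus $\restr{\mkttpt}{L}$ maps $L$ into $\sabs{\TTcplx'}$. Next I would verify $p\circ\restr{\mkttpt}{L}=\id_L$: for $b\in L$ we have $D^\Tp\lambda(b)\in\img D^\Tp=\ker p$ and $p(b)=b$, so $p(\mkttpt(b))=p(b)-p(D^\Tp\lambda(b))=b$.

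Finally, for $\restr{\mkttpt}{L}\circ p=\id_{\sabs{\TTcplx'}}$: if $a\in\sabs{\TTcplx'}$, then $a$ is TT, hence metric, so $t_{u,vw}(a)\ge0$ for every rooted triangle while for each $u$ some rooted triangle $u,vw$ satisfies $t_{u,vw}(a)=0$; therefore $\lambda_u(a)=\min_{v,w\ne u}t_{u,vw}(a)=0$ for every $u$, i.e.\ $\lambda(a)=0$ and $\mkttpt(a)=a$. Together with $\mkttpt(p(a))=\mkttpt(a)$ from the first paragraph this gives $\mkttpt(p(a))=a$, as required. The only step that is not purely formal is the coordinate computation of $t_{u,vw}(D^\Tp\xi)$ and the resulting fact that $\mkttpt$ factors through $p$; with that in hand there is no real obstacle, and, as a bonus, this explicit description of $p$ and $\mkttpt$ on cosets is exactly what one later uses to promote the present point-set bijection to the claimed isomorphism of fans.
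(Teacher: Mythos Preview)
Your proof is correct and rests on the same idea as the paper's: each coset $a+L^\bot$ contains a unique point of $L$ (namely $p(a)$) and a unique TT point (namely $\mkttpt(a)$), so the two maps are mutual inverses. The paper's argument is a two-line appeal to the uniqueness statement in Lemma~\ref{lem:use-TT-form}(\ref{lem:use-TT-form:TT-rep}), whereas you unpack this by computing $t_{u,vw}(D^\Tp\xi)=\xi_u$ to show $\mkttpt$ is constant on cosets and then verify both compositions explicitly; this is more self-contained but not a different approach.
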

\begin{proof}
  By Lemma~\ref{lem:use-TT-form}(\ref{lem:use-TT-form:TT-rep}), every co-set $a+\Lsp^\bot$ of $\Lsp^\bot$ contains a unique TT point, namely $\mkttpt(a)$.  The
  co-set also contains a unique point of $\Lsp$, namely the orthogonal projection $p(a)$ of $a$ onto $\Lsp$.  Hence, the two mappings are inverses of each
  other.
\end{proof}

In view of Lemma~\ref{lem:TTfans}, $p$ transports the fan $\TTcplx'$ into a fan $\TTcplx := p(\TTcplx')$ in $\Lsp$, the flat TT-fan defined in
Section~\ref{sec:expo}.  It is a complete fan in the ambient space $L$.
The next lemma states that the refinements of $\dl(N,S^\polar)$ used in Theorem~\ref{thm:expo:char} are identical.  The proof is a direct verification based on
the definitions of $E^u(\cdot)$ and $\mkttpt$, using Lemma~\ref{lem:TTfans}.

\begin{lemma}\label{lem:tiltcplx-refine}
  For two points $a,b\in L$, the following are equivalent:
  \begin{enumerate}\itromenumi
  \item\label{lem-enum:tiltcplx-refine:Eu} $E^u(a) = E^u(b)$ for all $u\in\Vsetn$
  \item\label{lem-enum:tiltcplx-refine:TTfan} $a$ and $b$ are in the relative interior of the same face of the flat TT-fan $\TTcplx$.
  \qed
  \end{enumerate}
\end{lemma}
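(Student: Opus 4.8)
The plan is to unwind both conditions to the common underlying description in terms of which triangle inequalities are tight at a point, which is exactly what links $E^u(\cdot)$ to the TT-fan $\TTcplx'$ via the orthogonal projection $p$.

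First I would recall that Lemma~\ref{lem:TTfans} identifies $L$ with $\sabs{\TTcplx'}$ via the mutually inverse maps $p$ and $\restr{\mkttpt}{L}$. So for $a,b\in L$, condition~(\ref{lem-enum:tiltcplx-refine:TTfan}) — that $a$ and $b$ lie in the relative interior of the same face of $\TTcplx = p(\TTcplx')$ — is equivalent, by transporting along this isomorphism, to saying that $\mkttpt(a)$ and $\mkttpt(b)$ lie in the relative interior of the same face of $\TTcplx'$. Now $\TTcplx'$ is by definition~\eqref{eq:expo:TTcplx-preimg} the intersection $\bigcap_u \bigcup_{v,w\ne u} \Cplx(F_{u,vw})$ of subcomplexes of $\Cplx(C)$, so two points of $\sabs{\TTcplx'}$ are in the relative interior of the same face of $\TTcplx'$ precisely when, for each $u$, they lie in the relative interior of the same face among the $F_{u,vw}$ — equivalently, when for every $u$ the set of rooted triangles $u,vw$ whose triangle inequality $t_{u,vw}(\cdot)=0$ is tight is the same for both points. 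Thus (\ref{lem-enum:tiltcplx-refine:TTfan}) says: for all $u$, the set $\{vw : t_{u,vw}(\mkttpt(a))=0\}$ equals $\{vw : t_{u,vw}(\mkttpt(b))=0\}$.

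Next I would compute what these tight-triangle sets are. Since $\mkttpt(a) = a - D^\Tp\lambda(a)$ and $D^\Tp\lambda(a)$ shifts the triangle slacks uniformly in the obvious way — one checks directly from $\delta_u\iprod\chi^{vw}$ etc. that $t_{u,vw}(a - D^\Tp\xi) = t_{u,vw}(a) - \xi_u$ for any $\xi\in\RR^\Vsetn$ — we get $t_{u,vw}(\mkttpt(a)) = t_{u,vw}(a) - \lambda_u(a) = t_{u,vw}(a) - \min_{v',w'\ne u} t_{u,v'w'}(a)$. Hence $t_{u,vw}(\mkttpt(a)) = 0$ if and only if $t_{u,vw}(a)$ attains the minimum $\min_{v',w'\ne u} t_{u,v'w'}(a)$, which by the definition~\eqref{eq:expo:Eu} is precisely the condition $vw\in E^u(a)$. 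So the tight-triangle set at $\mkttpt(a)$ rooted at $u$ is exactly $E^u(a)$, and likewise for $b$. Combining this with the previous paragraph, (\ref{lem-enum:tiltcplx-refine:TTfan}) holds if and only if $E^u(a) = E^u(b)$ for all $u$, which is (\ref{lem-enum:tiltcplx-refine:Eu}).

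The main obstacle, such as it is, is bookkeeping rather than depth: one must be careful that ``relative interior of the same face of $\bigcup_{v,w\ne u}\Cplx(F_{u,vw})$'' genuinely translates to ``same set of tight triangle inequalities rooted at $u$'' (a face of $C$ is cut out by a subset of the defining inequalities being tight, and a point's minimal face corresponds to the full set of inequalities tight there, so the relative interiors of faces of this union partition according to exactly which rooted-$u$ triangle inequalities are active), and that taking the common refinement over all $u$ in~\eqref{eq:expo:TTcplx-preimg} corresponds to intersecting these partitions, i.e.\ recording the tight set for every $u$ simultaneously. Given that, the shift identity $t_{u,vw}(a-D^\Tp\xi)=t_{u,vw}(a)-\xi_u$ and the definition of $\lambda_u$ close the loop cleanly, so the lemma follows by direct verification as claimed.
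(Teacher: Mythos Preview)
Your proposal is correct and is exactly the ``direct verification based on the definitions of $E^u(\cdot)$ and $\mkttpt$, using Lemma~\ref{lem:TTfans}'' that the paper alludes to without writing out; the paper gives no further details, and your argument --- transport to $\TTcplx'$ via $\mkttpt$, the shift identity $t_{u,vw}(a-D^\Tp\xi)=t_{u,vw}(a)-\xi_u$, and the identification of $E^u(a)$ with the tight-triangle set of $\mkttpt(a)$ rooted at $u$ --- fills in precisely what is intended.
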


For easy reference, let $\cplxD$ denote the common refinement of $\dl(N,S^\polar)$ and the flat TT-fan $\TTcplx$.  This is certainly a polyhedral complex, and
the previous lemma implies that two points are in the relative interior of the same face of $\cplxD$ if and only if (\ref{lem-enum:tiltcplx-refine:Eu}) holds.

This shows that items (\ref{thm-enum:expo:char:TTfan}) and~(\ref{thm-enum:expo:char:E}) of Theorem~\ref{thm:expo:char} are equivalent.
Moreover, to establish Theorem~\ref{thm:expo:char}, it remains to prove that the partition of $\sabs{\dl(N,S^\polar)}$ into open faces of $\cplxD$ coincides
with the partition $\TiltCplx^\circ$: Once this is established, both the statement about the closures and relative interiors in Theorem~\ref{thm:expo:char}, and
items (a) and (b) follow.

To prove that these two partitions coincide, we need to descent deeper into the properties of $P$.
If $F$ is a face of $P$, then a shortcut is said to be \textit{feasible} for $F$, if it is contained in the space $\direc F$.
We note the following for easy reference.

\begin{lemma}\label{lem:feasible-shortcut}
  If $F$ is a good face of $P$, then a shortcut $\sct_{u,vw}$ is feasible for $F$ if and only if $a\iprod \sct_{u,vw} = 0$ for one (and hence for all) $a\in
  \relint F^\poOps$.
\end{lemma}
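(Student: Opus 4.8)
The plan is to prove the two implications separately; the forward one is immediate and the converse is where the work lies.

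For the ``only if'' direction, suppose $\sct_{u,vw}\in\direc F$. Every $a\in F^\poOps$ satisfies $a\iprod x=1$ for all $x\in F$, hence $a$ annihilates all differences of points of $F$, hence annihilates their linear span $\direc F$; so $a\iprod\sct_{u,vw}=0$, and this holds for \emph{every} $a\in F^\poOps$, in particular for every $a\in\relint F^\poOps$.

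For the converse, fix $a\in\relint F^\poOps$ with $a\iprod\sct_{u,vw}=0$. By Lemma~\ref{lem:prop-blockingpoly}(\ref{prop:PRELIM:blocking-polarity:b}) the inequality $a\iprod x\ge1$ is valid for $P$ and the face of $P$ it defines is exactly $F$, i.e.\ $F=\{x\in P\mid a\iprod x=1\}$. It therefore suffices to exhibit two points of $F$ whose difference is $\sct_{u,vw}$, and the natural mechanism is the ``shortcut'' of Naddef \& Rinaldi (as recalled in the proof of Lemma~\ref{lem:use-TT-form}), run in reverse: starting from a connected Eulerian multi-graph $H_0$ on $\Vsetn$ whose incidence vector lies in $F$ and which uses the edge $vw$, delete one copy of $vw$ and add one copy each of $vu$ and $uw$ (this is the operation $x\mapsto x-\sct_{u,vw}$). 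The resulting multi-graph $H_1$ is again connected (a walk through the deleted copy of $vw$ is re-routed through the vertex $u$ along $vu,uw$) and again Eulerian (the degrees of $v$ and $w$ are unchanged while that of $u$ grows by $2$), so its incidence vector $\chi^{E(H_0)}-\sct_{u,vw}$ is a non-negative integer vector lying in $P$; and since $a\iprod\sct_{u,vw}=0$ it still satisfies $a\iprod x=1$, hence lies in $F$. Then $\sct_{u,vw}=\chi^{E(H_0)}-\bigl(\chi^{E(H_0)}-\sct_{u,vw}\bigr)\in\direc F$, which closes the argument; the parenthetical ``and hence for all $a\in\relint F^\poOps$'' in the statement follows by combining the two implications.

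The one step that needs care — and the only place where goodness of $F$ is used — is producing the starting graph $H_0$: it is not enough that $F$ merely contain a point with positive $vw$-coordinate, since that could be caused by the recession direction $\chi^{vw}$; one wants such a point that is honestly the incidence vector of a connected Eulerian multi-graph. The clean way to get it is to pass to $S\cap F$: by Lemma~\ref{lem:use-TT-form}(\ref{lem:use-TT-form:nononneg-inherit}) this is again a good, hence non-empty, face of $P$ that lies in the polytope $S$, so it is not contained in the non-negativity facet $\{x_{vw}=0\}$; being a polytope it then has a vertex, necessarily a Hamilton cycle $C$ with $vw\in E(C)$, and $\chi^{E(C)}\in F$, so one may take $H_0:=C$. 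I expect this extraction — turning ``$F$ is not contained in $\{x_{vw}=0\}$'' into ``$F$ has a Hamilton cycle through $vw$'' — to be the only point requiring thought; the connectivity/parity bookkeeping for $H_1$ and the remaining manipulations are routine.
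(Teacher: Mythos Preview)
Your argument is correct. The paper itself gives only a one-line sketch (``the polarity relations of Lemma~\ref{lem:prop-blockingpoly} hold between $F$ and $F^\poOps$; details left to the reader''), so any sound completion is acceptable, and yours is one.

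That said, your route is more constructive than what the paper's hint most naturally points to. A purely polyhedral argument runs as follows. Since $F$ is good, $F^\poOps$ is bounded (Lemma~\ref{lem:prop-blockingpoly}(\ref{prop:PRELIM:blocking-polarity:d})), and its vertices are exactly the normalized normals $a_G$ of the facets $G\supseteq F$; because no non-negativity facet contains $F$, these are all the tight constraints, so $\direc F=\{y: a_G\iprod y=0\text{ for every such }G\}$. Each such $G$ is itself good, hence each $a_G$ is metric by Lemma~\ref{lem:use-TT-form}(b), whence $a_G\iprod \sct_{u,vw}=-t_{u,vw}(a_G)\le 0$. A point $a\in\relint F^\poOps$ is a strictly positive convex combination of the $a_G$, so $a\iprod \sct_{u,vw}=0$ forces $a_G\iprod \sct_{u,vw}=0$ for every $G$, and therefore $\sct_{u,vw}\in\direc F$. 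This also makes the parenthetical ``one, and hence all'' immediate.

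Your approach has the virtue of exhibiting the shortcut concretely as a difference of two lattice points of $F$, which is the geometric meaning of ``feasible''; the price is the detour through Lemma~\ref{lem:use-TT-form}(\ref{lem:use-TT-form:nononneg-inherit}) to locate a Hamilton cycle through $vw$ inside $F$. The polarity argument is shorter and explains in one stroke why a single relative-interior witness determines the answer on all of $F^\poOps$, but it hides the combinatorics behind the sign condition $t_{u,vw}\ge 0$. One small wording point in your write-up: ``good, hence non-empty'' is correct but worth justifying explicitly (the empty face is contained in every non-negativity facet, so it is not good).
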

\begin{proof}
  If $F$ is a good face, then the polarity relations of Lemma~\ref{lem:prop-blockingpoly} hold between $F$ and $F^\poOps$.  The details are left to the reader.
%
\end{proof}

The following lemma highlights the importance of shortcuts in the relationship between $S$ and $P$.

\begin{lemma}\label{lem:face-of-gtsp-uniq-determ-ham-shtcts}%
  A good face $F$ of $P$ is uniquely determined by
  \begin{itemize}
  \item the set of cycles whose characteristic vectors are contained in $F$, plus
  \item the set of its feasible shortcuts.
  \end{itemize}
\end{lemma}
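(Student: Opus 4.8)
The plan is to reduce the statement to recovering the affine hull $\aff F$ from the two pieces of data, and then to invoke the elementary fact that a nonempty face $F$ of a polyhedron satisfies $F = P\cap\aff F$. Thus, writing $\mathcal C_F$ for the set of cycles $C$ with $\chi^C\in F$ and $\Sigma_F$ for the set of feasible shortcuts of $F$, it suffices to prove that $\mathcal C_F\ne\emptyset$ and that, for any fixed $C_0\in\mathcal C_F$,
\[
  \aff F \;=\; \chi^{C_0}+W,\qquad W:=\spn\bigl(\{\chi^{C}-\chi^{C_0}:C\in\mathcal C_F\}\cup\Sigma_F\bigr).
\]
The inclusion $\chi^{C_0}+W\subseteq\aff F$ is immediate, since cycle differences and feasible shortcuts lie in $\direc F$ by construction; the substance is the reverse inclusion $\direc F\subseteq W$.

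The nonemptiness of $\mathcal C_F$ is quick: as $F$ is good, $S\cap F$ is good by Lemma~\ref{lem:use-TT-form}(\ref{lem:use-TT-form:nononneg-inherit}), hence nonempty (the empty face is contained in every non-negativity facet), and any vertex of $S\cap F$ is a vertex of $S$, i.e.\ $\chi^{C_0}$ for some cycle $C_0$.

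For $\direc F\subseteq W$ I would fix a metric inequality $a\iprod x\ge\gamma$ defining $F$ (Lemma~\ref{lem:use-TT-form}(b); note $a\ge0$). Since $P$ is pointed with recession cone $\RR_+^{\Esetn}$ \cite{CornFonluNadd}, so is $F$: its vertices are among $V_F=\{\chi^H:H\text{ a connected Eulerian multigraph with }\chi^H\in F\}$ and its recession cone is $\cone\{\chi^e:a_e=0\}$, so $\direc F$ is spanned by the differences $\chi^H-\chi^{C_0}$ ($\chi^H\in V_F$) together with the rays $\chi^e$ ($a_e=0$), and it suffices to place both families into $W$. Given $\chi^H\in V_F$ with $H$ not a cycle, the shortcut argument (see the proof of Lemma~\ref{lem:use-TT-form}) produces a vertex $u$ of degree at least four and an edge $vw$ with $H':=H\cup\{vw\}\setminus\{uv,uw\}$ again a connected Eulerian multigraph, so that $\chi^{H'}=\chi^H+\sct_{u,vw}\in P$. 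Validity gives $a\iprod\sct_{u,vw}=a\iprod\chi^{H'}-\gamma\ge0$, while the triangle inequality gives $a\iprod\sct_{u,vw}=a_{vw}-a_{uv}-a_{uw}\le0$; hence $a\iprod\sct_{u,vw}=0$, so $\chi^{H'}\in F$ and $\sct_{u,vw}=\chi^{H'}-\chi^H\in\Sigma_F$. As $H'$ has one fewer edge, iterating terminates at a cycle $C\in\mathcal C_F$ with $\chi^H-\chi^C\in\spn\Sigma_F$, whence $\chi^H-\chi^{C_0}\in W$. Finally, for a ray $\chi^e$ with $e=vw$ and $a_e=0$, the multigraph obtained from $C_0$ by adding two copies of $e$ is connected and Eulerian and its vector $\chi^{C_0}+2\chi^e$ lies in $F$ (because $\chi^e$ is a recession direction of $F$); so $\chi^{C_0}+2\chi^e\in V_F$ and, by what was just shown, $2\chi^e\in W$, i.e.\ $\chi^e\in W$. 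This gives $\direc F\subseteq W$, and the lemma follows.

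I expect the main obstacle to be precisely this reverse inclusion: showing that $\mathcal C_F$ together with $\Sigma_F$ ``see all of'' $\direc F$. The two points needing care are (i) forcing the shortcuts used in the reduction of $\chi^H$ to a cycle to be \emph{feasible} — which is where one plays the validity inequality $\chi^{H'}\in P$ off against the triangle inequality to get $a\iprod\sct_{u,vw}=0$ — and (ii) recovering the recession rays $\chi^e$ of $F$, which are invisible to the cycles in $F$, by padding a cycle of $F$ with a doubled copy of $e$ and re-using (i).
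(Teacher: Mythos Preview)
Your argument is correct and follows the same overall strategy as the paper: reduce every vertex of $F$ to a cycle of $F$ via feasible shortcuts, and then account for the recession rays. The framing differs slightly---you work with $\aff F$ and invoke $F=P\cap\aff F$, while the paper simply says that the vertices and extreme rays of $F$ are determined---but these are equivalent.

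The one genuine difference is in the treatment of the recession rays $\chi^{uv}$. You reduce this to the vertex case by padding a cycle $C_0\in\mathcal C_F$ with a doubled copy of $uv$ and re-running the shortcut reduction. The paper instead gives a direct algebraic characterisation: for $a\in\relint F^\poOps$ one has $a_{uv}=0$ if and only if, for \emph{every} $w\ne u,v$, both $\sct_{u,vw}$ and $\sct_{v,uw}$ are feasible (this follows from the triangle inequality and Lemma~\ref{lem:feasible-shortcut}). The paper's route thus shows that the rays of $F$ are recoverable from the feasible shortcuts \emph{alone}, without reference to $\mathcal C_F$, which is slightly sharper; your route has the virtue of uniformity, recycling the same shortcut-reduction argument for both vertices and rays.
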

\begin{proof}
  By the shortcut argument, every vertex of $F$ is either itself a cycle, or it can be constructed from a cycle by successively subtracting feasible shortcuts.
  Further, $\RR_+\,\chi^{uv}$ is a ray of $F$ if and only if, for any $a\in \relint F^\poOps$, we have $a_{uv}=0$ (by Lemma~\ref{lem:use-TT-form}b).  By
  Lemma~\ref{lem:feasible-shortcut}, this is equivalent to the property that for every $w\ne u,v$, both $\sct_{u,vw}$ and $\sct_{v,uw}$ are feasible
  shortcuts.
\end{proof}

We can now finish the proof of Theorem~\ref{thm:expo:char}.

\begin{proof}[Proof of Theorem~\ref{thm:expo:char}(\ref{thm-enum:expo:char:E})]
  Let $a\in \sabs{\dl(N,S^\polar)}$.  The inequalities of the form~\eqref{eq:rotated-ieq} all define good faces of $P$, because $a$ defines a face of $S$ not
  contained in a non-negativity facet of $S$.  Moreover, since every inequality of the form~\eqref{eq:rotated-ieq} defines the same face of $S$,
  Lemma~\ref{lem:face-of-gtsp-uniq-determ-ham-shtcts} implies that every member of the set $\Pfaces(a)$ of faces of $P$ defined by inequalities of the
  form~\eqref{eq:rotated-ieq} is uniquely determined by its set of feasible shortcuts.

  We claim that the set $\Pfaces(a)$ is in bijection with the set of all subsets of $\Vsetn$, where the bijection is accomplished in the following way: To a
  subset $I\subset\Vsetn$, there is a face in $\Pfaces(a)$ whose set of feasible shortcuts is precisely
  \begin{equation}\label{eq:cup-of-Es}\tag{$*$}
    \bigcup_{u\in I}  \{ \sct_{u,e} \mid e \in E^u(a) \}.
  \end{equation}
  The faces obtainable in this way are clearly pairwise distinct by what we have just said (note that $E^u(a)\ne\emptyset$).  We have to construct a
  corresponding inequality for every set $I$, and we have to show that all faces in $\Pfaces(a)$ can be reached in this way.

  For the former issue, for $I\subset\Vsetn$ we define $q := \sum_{u\not\in I} \delta_u$, and consider the inequality
  \begin{equation*}
    (\mkttpt(a)+q) \iprod x   \ge    -1 + a\iprod z - \One\iprod\lambda(a) + q \iprod z,
  \end{equation*}
  which is of the form~\eqref{eq:rotated-ieq} because $\One = Dz$, and defines a good face of $P$ whose set of feasible shortcuts is easily verified to be
  \eqref{eq:cup-of-Es}, by Lemma~\ref{lem:feasible-shortcut}.
  
  To see that every face in $\Pfaces(a)$ can be obtained in this way, it is easy to check, invoking Lemma~\ref{lem:use-TT-form} and the definition of
  $E^u(a)$, that, if there exists an edge $vw$ such that $\sct_{u,vw}$ is feasible for a face $F$ in $\Pfaces(a)$, then $vw\in E^u(a)$ and $\sct_{u,e}$ is
  feasible for $F$ for every $e\in E^u(a)$.

  This completes the proof of Theorem~\ref{thm:expo:char}.
\end{proof}

\subsection{Projective equivalence of the two complexes}\label{ssec:piinv}

We now proceed to prove Theorem~\ref{thm:expo:inj}.  We want to define a mapping $\pi$ by letting
\begin{subequations}\label{eq:def-pi}
\begin{equation}\label{eq:def-pi-of-a}
  \pi(a) := \frac{1}{a\iprod z - 1}\,p(a),
\end{equation}
for $a \in P^\polar$.  The denominator will be zero, if, and only if, $a\iprod x \ge 1$ is satisfied by equality for all $x\in S$, in other words, $\pi(a)$ is
well-defined for all $a\in P^\polar\setminus S^\poOps$.

By Lemma~\ref{lem:use-TT-form}, a point $a$ in the complex $\bar\Cplx(P^\polar)$ of bounded faces of $P^\polar$ defines a good face of $S$, so we have
$\pi(a)\in \sabs{\dl(N,S^\polar)}$, whenever $a \not \in S^\poOps$.  Hence, we have the mapping
\begin{equation}\label{eq:range-pi}%
  \pi \colon \sabs{   \dl(S^\poOps,\bar\Cplx(P^\polar))  } \to    \sabs{\dl(N,S^\polar)}
\end{equation}
\end{subequations}

In this subsection, we will prove that $\pi$ as given in~\eqref{eq:def-pi} is a homeomorphism, and show that it induces a combinatorial equivalence between
$\dl(S^\poOps,\bar\Cplx(P^\polar))$ and the rotation complex $\TiltCplx$; i.e., we prove Theorem~\ref{thm:expo:inj}.
We will explicitly construct the inverse mapping $\pi^{-1}$, which, essentially, transforms a point into its TT-representative in the sense of
Lemma~\ref{lem:use-TT-form}(\ref{lem:use-TT-form:TT-rep}).

When we write the projective mapping $\pi$ as a linear mapping from $\RR\times\RR^\Esetn \to \RR\times\Lsp$ as in Section~\ref{sec:prelim}, it has the following
form:
\begin{equation*}
  \tilde\pi :=
  \begin{pmatrix}
    -1 &  z\iprod\place \\
    0 & p
  \end{pmatrix}.
\end{equation*}

As a technical intermediate step in the construction of $\pi^{-1}$, we define a linear mapping $I\colon \RR\times\RR^m\to\RR\times\RR^m$ taking points in
$\RR\times\Lsp$ to points in $\RR\times\RR^\Esetn$ by the matrix
\begin{equation*}
  I:=
  \begin{pmatrix}
    -1 & z\iprod\place \\
    0 & \id
  \end{pmatrix},
\end{equation*}
Now we let $(\gamma,c) := \mkttieq\circ I (1,\place)$; in long:
\begin{multline} \label{eq:c-gamma-def}
  (\gamma,c)\colon a \mapsto (\gamma(a),c(a)) := \mkttieq(I(1,a))
  = \bigl(-1 + a\iprod z -  \One\iprod\lambda(a)\,,\;  a -D^\Tp\lambda(a) \bigr).
\end{multline}

Clearly, for all $a\in\Lsp$, the point $c(a)$ is TT.  If $a\in S^\polar$, i.e., if the inequality $a\iprod x \ge -1 + a\iprod z$ is valid for $S$, then the
inequality $c(a) \ge \gamma(a)$ is of the form \eqref{eq:rotated-ieq} (cf.~the corresponding statement in the proof of Theorem~\ref{thm:expo:char} above).  We
note the following fact as a lemma for the sake of easy reference.

\begin{lemma}\label{lem:cgamma-def-face}
  If $a\in S^\polar$, the two inequalities $a\iprod x\ge -1+a\iprod z$ and $c(a)\iprod x\ge\gamma(a)$ define the same face of $S$.
  \qed
\end{lemma}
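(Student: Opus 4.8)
The plan is to show that, \emph{restricted to the affine hull of $S$}, the two affine functionals $x\mapsto a\iprod x-(a\iprod z-1)$ and $x\mapsto c(a)\iprod x-\gamma(a)$ are literally \emph{identical}. Once this is established the lemma is immediate: since $S\subset\aff S$, the two inequalities then have the same solution set inside $S$ and in particular the same equality set, and the first one is valid for $S$ precisely because $a\in S^\polar$; hence both define the same face of $S$. (Off $\aff S$ the two functionals genuinely differ — that is the whole point of ``rotation'' — so it is essential that the comparison is made on $\aff S$ only.)

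First I would record the two ingredients. One: $c(a)=a-D^\Tp\lambda(a)$ differs from $a$ by an element of $\img D^\Tp=\Lsp^\bot$, the space of linear combinations of the $\delta_u$. Two: every $x\in\aff S$ satisfies $Dx=\One$; indeed $\aff S=z+\Lsp=z+\ker D$, and $Dz=\One$ because, by the choice of $z$ in \eqref{eq:tsp:def-z}, $z$ is an average of the cycle vertices $\chi^{E(C)}$, each of which satisfies $\delta_u\iprod\chi^{E(C)}=1$ for all $u$.

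Then the verification is one line: for $x\in\aff S$ we have $c(a)\iprod x=a\iprod x-(D^\Tp\lambda(a))\iprod x=a\iprod x-\lambda(a)\iprod(Dx)=a\iprod x-\lambda(a)\iprod\One=a\iprod x-\One\iprod\lambda(a)$, and subtracting $\gamma(a)=-1+a\iprod z-\One\iprod\lambda(a)$ (see \eqref{eq:c-gamma-def}) yields $c(a)\iprod x-\gamma(a)=a\iprod x-(a\iprod z-1)$, which is exactly the claimed identity of restrictions.

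There is no genuine obstacle: the argument is a routine computation together with the observation $Dz=\One$. The only points requiring a little care are the ones flagged above — that the equality of functionals holds only on $\aff S$, and that ``$c(a)\iprod x\ge\gamma(a)$ and $a\iprod x\ge a\iprod z-1$ define the same face of $S$'' is to be read as ``they have the same set of minimizers over $S$'', which the identity of restrictions delivers directly (this also re-derives that $c(a)\iprod x\ge\gamma(a)$, being of the rotated form \eqref{eq:rotated-ieq}, is valid for $S$).
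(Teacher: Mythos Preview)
Your proof is correct and is essentially the paper's argument made explicit. The paper gives no proof beyond the $\qed$; the sentence preceding the lemma says that $c(a)\iprod x\ge\gamma(a)$ is of the rotated form~\eqref{eq:rotated-ieq} (using $\One=Dz$, exactly as you do), and a rotation by construction adds only equations valid for $\aff S$, hence leaves the face of $S$ unchanged --- your computation is precisely this observation spelled out.
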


Finally, we define
\begin{equation}  \label{eq:phi-def}
  \varphi\colon
  \sabs{\del(N,S^\polar)} \to  \sabs{\del(S^\poOps,\bar\Cplx(P^\polar))}
  \colon
  \quad
  a \mapsto \frac{1}{\gamma(a)} \; c(a).
\end{equation}

\begin{proof}[Proof of Theorem~\ref{thm:expo:inj}]
  In the remainder of this section, we will discuss the following issues:
  \begin{aenumeratei}
  \item $\varphi$ is well-defined (in \ref{sssec:phi-welldef})
  \item $\varphi$ is a left-inverse of $\pi\colon \sabs{ \del(S^\poOps,\bar\Cplx(P^\polar)) } \to \sabs{ \del(N,S^\polar) }$ (in
    \ref{sssec:phi-leftinverse-of-pi})
  \item $\pi\colon \sabs{ \del(S^\poOps,\bar\Cplx(P^\polar))} \to \sabs{ \del(N,S^\polar) }$ is onto (in \ref{sssec:phi-inj})
  \item $\pi\colon \sabs{ \del(S^\poOps,\bar\Cplx(P^\polar)) } \to \sabs{ \del(N,S^\polar) }$ is a refinement map inducing the rotation complex $\TiltCplx$ (in
    \ref{sssec:refinement-map}).
  \end{aenumeratei}
  Items (b) and (c) imply that
  \begin{equation*}
    \varphi\circ \pi = \id_{\sabs{\del(S^\poOps,\bar\Cplx(P^\polar))}}
    \text{\quad and\quad}
    \pi\circ \varphi = \id_{\sabs{\del(N,S^\polar)}}.
  \end{equation*}
  From this and (d), Theorem~\ref{thm:expo:inj} follows.
\end{proof}


\subsubsection{$\pi$ induces the rotation complex}\label{sssec:refinement-map}

We first prove that $\pi$ is a refinement map inducing the rotation complex.  For this, we use the above stated properties inverse mapping $\varphi$, which are
only proved below.

\begin{lemma}\label{lem:phi-F-G}
  For every face $F$ of $\TiltCplx$ there exists a face $G$ of $\dl(S^\poOps,\bar\Cplx(P^\polar))$ with $\varphi(\relint F) \subset \relint G$.
\end{lemma}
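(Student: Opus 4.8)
The plan is to understand $\varphi$ as (up to the projective rescaling $a\mapsto \tfrac{1}{\gamma(a)}c(a)$) the inverse of the linear map $\tilde\pi$ composed with $\mkttieq\circ I$, and then read off how it acts on faces. Concretely, recall from \eqref{eq:phi-def} and \eqref{eq:c-gamma-def} that $\varphi(a) = \tfrac{1}{\gamma(a)}c(a)$ where $(\gamma,c)(a) = \mkttieq(I(1,a))$, so that the affine/projective part is governed by the linear map $\mkttieq\circ I$, and the only genuinely nonlinear ingredient is the piecewise-linear map $\lambda$ of \eqref{eq:def-lambda}, which enters through $\mkttpt$ and $\mkttieq$. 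The key point is that $\lambda$ is \emph{linear on each face of the TT-fan} $\TTcplx'$ — indeed on the preimages $p^{-1}$ of the faces of the flat TT-fan $\TTcplx$ — since on each such cone the minimum in \eqref{eq:def-lambda-u} is achieved by a fixed rooted triangle. Therefore $\varphi$ is projective-linear on each face $F$ of $\TiltCplx$: by Theorem~\ref{thm:expo:char}\eqref{thm-enum:expo:char:TTfan}, $F$ is the intersection of a face of $\dl(N,S^\polar)$ with a face of $\TTcplx$, on the latter of which $\lambda$ is linear, hence $\varphi$ restricted to $F$ is the restriction of a single projective map $\RR^\Esetn \dashrightarrow \RR^\Esetn$.

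First I would fix a face $F$ of $\TiltCplx$ and pick $a_0 \in \relint F$. The image $\varphi(a_0)$ lies in $\sabs{\dl(S^\poOps,\bar\Cplx(P^\polar))}$ (once the well-definedness of $\varphi$ in \ref{sssec:phi-welldef} is granted), so it lies in the relative interior of a unique face $G$ of $\dl(S^\poOps,\bar\Cplx(P^\polar))$; this $G$ is the candidate. I would then show $\varphi(\relint F)\subset\relint G$ as follows. Since $\varphi$ agrees on $F$ with a projective map $g$ that is injective there (its inverse being the restriction of $\pi$, via item (b) of the proof of Theorem~\ref{thm:expo:inj}, granted from \ref{sssec:phi-leftinverse-of-pi}), $\varphi(F)$ is a polytope and $\varphi$ maps the face lattice of $F$ injectively and inclusion-preservingly into that of $\dl(S^\poOps,\bar\Cplx(P^\polar))$. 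In particular $\varphi$ maps $\relint F$ into a single face; that face is $G$ by the choice of $a_0$, and relative interiors go to relative interiors under an injective projective map between polytopes of the same dimension.

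The cleanest way to package the last step is to combine Lemma~\ref{lem:tiltcplx-refine} (two points lie in the same open face of $\TTcplx$ iff they have the same $E^u$-data) with Theorem~\ref{thm:expo:char}\eqref{thm-enum:expo:char:E} and Lemma~\ref{lem:face-of-gtsp-uniq-determ-ham-shtcts}: points $a,b\in\relint F$ have the same $E^u(a)=E^u(b)$ and lie in the same face of $S^\polar$, hence by the construction in the proof of Theorem~\ref{thm:expo:char} the faces $c(a)\iprod x\ge\gamma(a)$ and $c(b)\iprod x\ge\gamma(b)$ of $P$ contain exactly the same cycles and have exactly the same feasible shortcuts, so by Lemma~\ref{lem:face-of-gtsp-uniq-determ-ham-shtcts} they are the \emph{same} face of $P$; passing to conjugate faces via Lemma~\ref{lem:prop-blockingpoly}\eqref{prop:PRELIM:blocking-polarity:c}, $\varphi(a)$ and $\varphi(b)$ lie in the relative interior of one and the same bounded (by Lemma~\ref{lem:prop-blockingpoly}\eqref{prop:PRELIM:blocking-polarity:d}, since the face is good) face $G$ of $P^\polar$, which is the desired $G\in\dl(S^\poOps,\bar\Cplx(P^\polar))$.

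The main obstacle I anticipate is bookkeeping at the boundary: one must ensure that as $a$ ranges over $\relint F$ the minimizing triangles in \eqref{eq:def-lambda-u} really are constant (so $\varphi\big|_F$ is genuinely one projective map, and $c(a)$ stays TT with a \emph{stable} set of tight triangles $E^u(a)$), and that the rescaling factor $1/\gamma(a)$ never vanishes or changes sign on $\relint F$ — this is exactly the content of $\gamma(a) = -1 + a\iprod z - \One\iprod\lambda(a) > 0$ for $a\in\sabs{\dl(N,S^\polar)}$, which should follow from $a\in S^\polar$ being strictly inside (so $a\iprod x > -1 + a\iprod z$ fails only on the face, never everywhere) together with Lemma~\ref{lem:cgamma-def-face}; equivalently it is the well-definedness of $\pi$ run backwards. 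Once the constancy of the minimizing triangles on $\relint F$ and the positivity of $\gamma$ are pinned down, the inclusion $\varphi(\relint F)\subset\relint G$ is the formal statement that an injective projective map carries the relative interior of a polytope into the relative interior of its image.
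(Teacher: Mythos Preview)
Your ``cleanest way'' paragraph is essentially the paper's own proof: fix $a\in\relint F$, let $G^\#$ be the face of $P$ defined by $\varphi(a)\iprod x\ge 1$ (equivalently by $c(a)\iprod x\ge\gamma(a)$), observe that the cycles in $G^\#$ depend only on the face $F'$ of $S^\polar$ containing $\relint F$ while the feasible shortcuts are governed by the sets $E^u(a)$, which are constant on $\relint F$ by Theorem~\ref{thm:expo:char}\eqref{thm-enum:expo:char:E}; then Lemma~\ref{lem:face-of-gtsp-uniq-determ-ham-shtcts} forces $G^\#$ to be independent of $a$, and $G:=(G^\#)^\poOps$ is the desired face. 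That is exactly what the paper does.

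Your preliminary discussion of the piecewise-projective structure of $\varphi$ (linearity of $\lambda$ on faces of the TT-fan) is correct and illuminating, but the attempted shortcut through it---``$\varphi$ maps the face lattice of $F$ injectively and inclusion-preservingly into that of $\dl(S^\poOps,\bar\Cplx(P^\polar))$''---is not justified as stated: knowing that $\varphi|_F$ is an injective projective map only tells you $\varphi(F)$ is a polytope sitting inside $\sabs{\dl(S^\poOps,\bar\Cplx(P^\polar))}$, not that it is a \emph{face} of that complex. You implicitly recognize this and fall back to the cycles-plus-shortcuts argument, which is the right move.
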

\begin{proof}
  Let $F'$ be the face of $\dl(N,S^\polar)$ with $\relint F \subset \relint F'$.  Now, let $a\in\relint F$ and $G^\#$ be the face of $P$ defined by the
  inequality $\varphi(a)\iprod x \ge 1$.  Since this inequality defines the same face of $P$ as the inequality $c(a)\iprod x \ge \gamma(a)$ which is of the
  form~\eqref{eq:rotated-ieq}, the set of cycles whose characteristic vectors are in $G^\#$ coincides with those contained in the face $F'^\poOps$ of $S$, where
  the conjugate face is taken in $S$ vs.~$S^\polar$ (not in $P$ vs.~$P^\polar$), and thus does not depend on the choice of $a \in \relint F'$.  Moreover, the
  set of feasible shortcuts for $G^\#$ is in bijection with $E^u(a)$, $u\in\Vsetn$, and hence, by Theorem~\ref{thm:expo:char}, depends only on $F$ not on the
  choice of $a\in\relint F$.  Thus, by Lemma~\ref{lem:face-of-gtsp-uniq-determ-ham-shtcts}, $G^\#$ does not depend on the choice of $a\in\relint F$.  Hence, with
  $G := (G^\#)^\poOps$, we have $\varphi(a) \in \relint G$ for all $a\in\relint F$.
\end{proof}

Lemma~\ref{lem:phi-F-G} provides us with a mapping $\Phi\colon F\mapsto G$ with $F$ and $G$ as in the lemma.  Moreover, the argument based on
Lemma~\ref{lem:face-of-gtsp-uniq-determ-ham-shtcts} in the proof of Lemma~\ref{lem:phi-F-G} shows that $\Phi(F_1) \ne \Phi(F_2)$ whenever $F_1\ne F_2$, i.e.,
$\Phi$ is injective, and, by the surjectivity of $\varphi$, it is also onto.  Hence, we obtain the following:

\begin{lemma}\label{lem:phi-Phi-F-G}
  There is a bijection $\Phi\colon \TiltCplx \to \dl(S^\poOps,\bar\Cplx(P^\polar))$ with $\Phi(F) = \varphi(F)$.
\end{lemma}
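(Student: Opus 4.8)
The plan is to let $\Phi$ be exactly the assignment $F\mapsto G$ produced by Lemma~\ref{lem:phi-F-G}, i.e. $\Phi(F)$ is the face of $\dl(S^\poOps,\bar\Cplx(P^\polar))$ characterised by $\varphi(\relint F)\subset\relint\Phi(F)$, and then to establish three things: $\Phi$ is well defined, $\Phi$ is a bijection, and $\Phi(F)=\varphi(F)$ as point sets. Well-definedness is immediate, since the relative interiors of the faces of a polyhedral complex are pairwise disjoint while $\varphi(\relint F)\neq\emptyset$, so the face $G$ with $\varphi(\relint F)\subset\relint G$ is unique.

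For injectivity I would re-use the argument already present in the proof of Lemma~\ref{lem:phi-F-G}. Fix $a\in\relint F$, let $F'$ be the face of $\dl(N,S^\polar)$ whose open face contains $\relint F$, and let $G^{\#}:=\Phi(F)^\poOps$ be the good face of $P$ defined by $\varphi(a)\iprod x\ge 1$ (equivalently, by the rotated inequality $c(a)\iprod x\ge\gamma(a)$, which is of the form \eqref{eq:rotated-ieq}). By Lemma~\ref{lem:face-of-gtsp-uniq-determ-ham-shtcts}, $G^{\#}$, and hence $\Phi(F)$, is determined by (i) the set of cycles whose characteristic vectors lie in $G^{\#}$, which coincides with the set of cycles in the conjugate face $F'^\poOps$ of $S$, and (ii) the set of feasible shortcuts of $G^{\#}$, namely $\bigcup_{u\in\Vsetn}\{\sct_{u,e}\mid e\in E^{u}(a)\}$. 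Going backwards, (i) recovers the face $F'^\poOps$ of the polytope $S=\conv\{\chi^{E(C)}\}$ and hence $F'$, while (ii) recovers the family $(E^{u}(a))_{u\in\Vsetn}$; by Theorem~\ref{thm:expo:char}(b), knowing $F'$ together with $(E^{u}(a))_{u}$ pins down $\relint F$, hence $F$. Therefore $\Phi(F_1)=\Phi(F_2)$ forces $F_1=F_2$.

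For surjectivity I would invoke the facts — proven later in this subsection in the course of Theorem~\ref{thm:expo:inj} — that $\pi$ is onto and $\varphi\circ\pi=\id$; together they make $\pi$ a bijection with $\varphi=\pi^{-1}$, so $\varphi$ is onto as well. Given a face $G$ of $\dl(S^\poOps,\bar\Cplx(P^\polar))$, choose $b\in\relint G$ and set $a:=\pi(b)$, so that $\varphi(a)=b$; if $F$ is the face of $\TiltCplx$ with $a\in\relint F$, then $b=\varphi(a)\in\varphi(\relint F)\subset\relint\Phi(F)$, so $\relint\Phi(F)$ meets $\relint G$ and thus $\Phi(F)=G$. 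For the identity $\Phi(F)=\varphi(F)$ I would argue as follows: $\varphi$ is a bijection of the underlying point sets sending each open face $\relint F$ into the open face $\relint\Phi(F)$, and both $\{\relint F\}_F$ and $\{\relint G\}_G$ partition their point sets, so each $\relint G$ is the image of exactly one $\relint F$ (two would contradict injectivity of $\Phi$), whence $\varphi(\relint F)=\relint\Phi(F)$ exactly. Since $\sabs{\dl(N,S^\polar)}$ is compact (a finite union of polytopes) and $\varphi$ is a continuous bijection onto the Hausdorff space $\sabs{\dl(S^\poOps,\bar\Cplx(P^\polar))}$, it is a homeomorphism, so $\varphi(F)=\varphi(\close{\relint F})=\close{\varphi(\relint F)}=\close{\relint\Phi(F)}=\Phi(F)$.

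The substantive work sits in the injectivity step, but that is really a repackaging of the proof of Lemma~\ref{lem:phi-F-G}, so I do not expect difficulty there. The point that needs the most care is logical rather than computational: establishing that $\varphi$ is genuinely bijective, and therefore (by compactness) a homeomorphism, leans on the later items ``$\pi$ onto'' and ``$\varphi\circ\pi=\id$'', so this lemma must be read as using those, or the subsection reorganised so that surjectivity of $\varphi$ is in hand first; after that, upgrading $\varphi(\relint F)\subset\relint\Phi(F)$ to the equality $\varphi(\relint F)=\relint\Phi(F)$, and then to $\varphi(F)=\Phi(F)$, is the remaining delicate point.
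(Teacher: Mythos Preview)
Your proposal is correct and tracks the paper's setup closely: well-definedness is immediate, injectivity is exactly the repackaging of the Lemma~\ref{lem:phi-F-G} argument that the paper uses (recovering $F'$ from the cycles in $G^\#$ and $(E^u(a))_u$ from the feasible shortcuts, then invoking Theorem~\ref{thm:expo:char}(b)), and surjectivity comes from $\varphi$ being onto, which both you and the paper take on credit from the later items in the subsection.

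The genuine difference is in the final step, upgrading $\varphi(\relint F)\subset\relint\Phi(F)$ to $\varphi(F)=\Phi(F)$. The paper works with boundaries: it argues that $\varphi(\partial F)\subset\partial\Phi(F)$ and then appeals to the Borsuk-Ulam theorem (together with injectivity of $\Phi$) to force $\varphi(\partial F)=\partial\Phi(F)$, from which $\varphi(F)=\Phi(F)$ follows. You instead use a partition argument: since $\varphi$ is a bijection of the underlying point sets, $\Phi$ is a bijection of faces, and the open faces on each side form partitions, the containments $\varphi(\relint F)\subset\relint\Phi(F)$ are forced to be equalities; then compactness of $\sabs{\dl(N,S^\polar)}$ makes $\varphi$ a homeomorphism, so closures commute with $\varphi$ and $\varphi(F)=\Phi(F)$. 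Your route is more elementary and sidesteps Borsuk-Ulam entirely; the paper itself remarks after its proof that the topological arguments can be replaced, and your argument is a clean way to do so. Your explicit caveat about the forward dependency on ``$\pi$ onto'' and ``$\varphi\circ\pi=\id$'' matches the paper's own organization of the subsection.
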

\begin{proof}
  What remains to be shown is that $\Phi(F) = \varphi(F)$.  We already know that $\varphi(\relint F) \subset \relint \Phi(F)$.  Standard Euclidean topology
  arguments show that $\varphi$ maps the boundary $\partial F$ of $F$ into the boundary of $\varphi(F)$.  (This is most easily seen by noting that $\varphi$ is
  the inverse of a projective mapping.)  But the boundary of $F$ is the union of its facets, so we have
  \begin{equation*}
    \Phi(F)\setminus\relint\varphi(F)
    \supset
    \varphi(F)\setminus\relint\varphi(F)
    \supset
    \varphi(\partial F)
    =
    \bigcup\nolimits_{F'} \varphi(F')
    \subset
    \bigcup\nolimits_{F'} \Phi(F')
  \end{equation*}
  where the union extends over all facets $F'$ of $F$.  Consequently, by the injectivity of $\Phi$, we have $\varphi(\partial F) \subset \partial\Phi(F)$.
  Again by standard topological arguments (Borsuk-Ulam theorem) and the injectivity of $\Phi$ this implies $\varphi(\partial F) = \partial\Phi(F)$, and this in
  turn gives $\varphi(F) = \Phi(F)$.
\end{proof}

\begin{remark}
  The topological arguments contained in the proof of Lemma~\ref{lem:phi-Phi-F-G} can be replaced by more technical polyhedral theory ones.  In any case, they
  reflect basic geometric facts which are not worth to be emphasized.
\end{remark}

\subsubsection{We show: $\varphi$ is well-defined} \label{sssec:phi-welldef}

We start by showing that the quotient in \eqref{eq:phi-def} is well-defined.
The key ingredient here is the fact that we are only considering good faces.

\begin{lemma}  \label{lemma:gamma-never-vanish}
  For all $a\in \sabs{\del(N,S^\polar))}$ we have $\gamma(a)>0$.
\end{lemma}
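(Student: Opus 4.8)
The plan is to use that $c(a)\iprod x\ge\gamma(a)$ is the tight-triangular rotation of the valid inequality $a\iprod x\ge a\iprod z-1$ of $S$. As observed just before the statement of the lemma, this inequality is of the form~\eqref{eq:rotated-ieq}; since $c(a)$ is TT, hence metric, and the inequality is valid for $S$ (Lemma~\ref{lem:cgamma-def-face} together with $a\in S^\polar$), it is valid for $P$ by Lemma~\ref{lem:use-TT-form}(a), and, being of the form~\eqref{eq:rotated-ieq} with $a\in\sabs{\del(N,S^\polar)}$, it defines a \emph{good} face of $P$ by the argument already given in the proof of Theorem~\ref{thm:expo:char}. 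Call this face $G$. I would then combine the goodness of $G$ with two elementary facts: first, $c(a)_e\ge 0$ for every $e$, because $c(a)$ is metric; second, $c(a)\ne 0$. For the latter, note $c(a)=a-D^\Tp\lambda(a)$ lies in $a+\Lsp^\bot$, so $c(a)=0$ would force $a\in\Lsp^\bot$; but $a\in S^\polar\subset\Lsp$, whence $a=0$, which is impossible since $0$ is a relative-interior point of the polytope $S^\polar$ and so lies in no proper face of $S^\polar$, while $S^\polar$ itself is not in $\del(N,S^\polar)$ because it contains the (non-empty, as $n\ge 5$) set of vertices $N$.

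Granting this, the conclusion is immediate. Fix an edge $e_0$ with $c(a)_{e_0}>0$. Since $G$ is good, it is not contained in the non-negativity facet $\{x\in P\mid x_{e_0}=0\}$, so there is $x^\ast\in G$ with $x^\ast_{e_0}>0$. Using $x^\ast\in P\subset\RR_+^{\Esetn}$ and $c(a)\ge 0$ coordinate-wise,
\begin{equation*}
  \gamma(a)=c(a)\iprod x^\ast=\sum_{e\in\Esetn}c(a)_e\,x^\ast_e\ \ge\ c(a)_{e_0}\,x^\ast_{e_0}\ >\ 0,
\end{equation*}
which is the claim.

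The one step that needs care is the claim that $G$ is good; if one prefers not to quote the proof of Theorem~\ref{thm:expo:char}, one can argue directly via Lemma~\ref{lem:cgamma-def-face}, which identifies $G\cap S$ with the face of $S$ defined by $a\iprod x\ge a\iprod z-1$. That face is non-empty, and it is not contained in any non-negativity facet of $S$ because $a$ lies in a face of $S^\polar$ disjoint from $N$; hence, were $G$ contained in $\{x\in P\mid x_{e_0}=0\}$, the face $G\cap S$ would lie in the non-negativity facet $\{x\in S\mid x_{e_0}=0\}$ of $S$ (a facet precisely because $n\ge 5$), a contradiction. Beyond this bookkeeping I do not expect any genuine obstacle: the lemma records only that passing to the TT representative never collapses a good face of $P$ to the empty face or into a coordinate hyperplane.
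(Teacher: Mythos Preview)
Your proof is correct and uses essentially the same ingredients as the paper's: metricity gives $c(a)\ge 0$, the rotated inequality defines a good face of $P$ (via Lemma~\ref{lem:cgamma-def-face} and the fact that $a$ avoids the non-negativity vertices of $S^\polar$), and $c(a)\ne 0$. The paper organizes this as a proof by contradiction assuming $\gamma(a)=0$ and splitting on whether $c(a)=0$ or $c(a)\gneq 0$; you instead argue directly, first excluding $c(a)=0$ by the orthogonality observation $c(a)\in a+\Lsp^\bot$ with $a\in\Lsp$, and then picking a witness $x^\ast\in G$ with $x^\ast_{e_0}>0$ to force $\gamma(a)>0$---a pleasant reorganization that also automatically handles the case $\gamma(a)<0$, which the paper's phrasing ``Assume to the contrary that $\gamma(a)=0$'' leaves implicit.
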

\begin{proof}
  Assume to the contrary that $\gamma(a)=0$.  Since $c(a)$ is metric, $c(a)\ge 0$ holds.  We distinguish two cases: $c(a)=0$ and $c(a)\gneq 0$.  In the first
  case, the hyperplane defined by $c(a)\iprod x = \gamma(a)$ contains $S$, while $a\iprod x\ge -1+a\iprod z$ defines a proper face of $S$, a contradiction to
  Lemma~\ref{lem:cgamma-def-face}.  On the other hand, if $c(a)\gneq 0$, then the inequality $c(a)\iprod x\ge\gamma(a)$ is a non-negative linear combination of
  non-negativity inequalities, and hence the face defined by $c(a)\iprod x = \gamma(a)$ is contained in a non-negativity facet of $P$.  But since
  $a\in\sabs{\del(N,S^\polar))}$, i.e., $a$ it is not a relative interior point of a face of $S^\polar$ which contains a vertex of $S^\polar$
  corresponding to a non-negativity facet of $S$, the face of $S$ defined by $a\iprod x \ge -1+ a\iprod z$ is not contained in a non-negativity facet of $S$.
  Thus Lemma~\ref{lem:cgamma-def-face} yields a contradiction.
\end{proof}

It remains to be shown that the image of $\sabs{\del(N,S^\polar))}$ under $\varphi$ is really contained in the target space given in \eqref{eq:phi-def}:
For all $a\in \sabs{\del(N,S^\polar))}$ we have $\varphi(a) \in \sabs{\del(S^\poOps,\bar\Cplx(P^\polar))}$.
This also follows from Lemma~\ref{lem:cgamma-def-face}: The inequality $\varphi(a)\iprod x\ge 1$ is valid for $P$, and the face it defines is good.  Since
$\varphi(a)$ is TT, the conclusion follows from Remark~\ref{rem:sbcplx-bd-tt}.

\subsubsection{We show: $\varphi$ is a left-inverse of $\pi$,}\label{sssec:phi-leftinverse-of-pi}%
i.e., for all $a\in\sabs{\del(S^\poOps,\bar\Cplx(P^\polar))}$ the identity $\varphi(\pi((a))=a$ holds.

\begin{lemma}
  For all $a\in \sabs{\del(S^\poOps,\bar\Cplx(P^\polar))}$ we have $(\gamma,c)(\tilde\pi(1,a)) = (1,a)$.  In particular, we have that $\varphi\circ \pi$
  restricted to $\abs{\del(S^\poOps,\bar\Cplx(P^\polar))}$ is equal to the identity mapping on this set.
\end{lemma}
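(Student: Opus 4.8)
The plan is to prove both assertions by simply unwinding the definitions of the three maps $\tilde\pi$, $I$ and $\mkttieq$ on the vector $(1,a)$. The computation rests on two structural observations: that $z$ lies in $\Lsp^\bot=\img D^\Tp$ — so that $z\iprod p(a)=0$ (because $p(a)\in\Lsp$) and $Dz=\One$ (as already noted in the proof of Theorem~\ref{thm:expo:char}) — and that the hypothesis on $a$ forces $a$ to coincide with its own tight-triangular representative.

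For the first identity I would compute, in order: $\tilde\pi(1,a)=(a\iprod z-1,\;p(a))$ from the matrix of $\tilde\pi$; then $I(\tilde\pi(1,a))=(1-a\iprod z,\;p(a))$, where the term $z\iprod p(a)$ is dropped because $p(a)\in\Lsp$ and $z\in\Lsp^\bot$; then, by the definition of $\mkttieq$,
\begin{equation*}
  (\gamma,c)(\tilde\pi(1,a)) = \mkttieq\bigl(I(\tilde\pi(1,a))\bigr) = \bigl(1-a\iprod z-\One\iprod\lambda(p(a)),\;\mkttpt(p(a))\bigr).
\end{equation*}
It then remains to check two equalities. First, $\mkttpt(p(a))=a$: since $a-p(a)\in\Lsp^\bot$, the points $a$ and $p(a)$ lie in the same co-set of $\Lsp^\bot$, and by Lemma~\ref{lem:use-TT-form}(\ref{lem:use-TT-form:TT-rep}) the map $\mkttpt$ sends every such co-set to its unique TT point; as $a\in\sabs{\del(S^\poOps,\bar\Cplx(P^\polar))}$ is itself TT by Remark~\ref{rem:sbcplx-bd-tt}, it must be that point. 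Second, $\One\iprod\lambda(p(a))=-a\iprod z$: rewriting $\mkttpt(p(a))=p(a)-D^\Tp\lambda(p(a))=a$ as $D^\Tp\lambda(p(a))=p(a)-a$ and pairing with $z$ gives $\One\iprod\lambda(p(a))=(Dz)\iprod\lambda(p(a))=z\iprod(p(a)-a)=-a\iprod z$, once more using $z\iprod p(a)=0$. Substituting both equalities into the display yields $(\gamma,c)(\tilde\pi(1,a))=(1,a)$.

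For the ``in particular'' clause I would pass from this linear identity to the projective one. Set $t:=a\iprod z-1$, so that $\tilde\pi(1,a)=(t,p(a))=t\cdot(1,\pi(a))$; here $t>0$, because $z=\frac{2}{n-1}\One$ is the average (barycenter) of the vertices of $S$ and hence lies in $\relint S$, so for $a\in P^\polar$ the equality $a\iprod z=1$ would force $a\iprod x=1$ for all $x\in S$, i.e.\ $a\in S^\poOps$, which is excluded since $a$ lies on a bounded face of $P^\polar$ disjoint from $S^\poOps$. Since $\mkttieq\circ I$ is positively homogeneous (its only non-linear ingredient, $\lambda$, is), the first identity gives $(1,a)=(\gamma,c)\bigl(t\cdot(1,\pi(a))\bigr)=t\cdot\bigl(\gamma(\pi(a)),c(\pi(a))\bigr)$, so $\gamma(\pi(a))=1/t$ and $c(\pi(a))=a/t$, whence $\varphi(\pi(a))=c(\pi(a))/\gamma(\pi(a))=a$. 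The computation presents no real obstacle; the only steps demanding a little care are the homogeneity-and-positivity bookkeeping in this last step, and the one genuinely substantive input is the identification $\mkttpt(p(a))=a$, which is exactly where the hypothesis enters, through Remark~\ref{rem:sbcplx-bd-tt}.
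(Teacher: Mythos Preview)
Your proof is correct and follows essentially the same route as the paper's: compute $\mkttieq\circ I\circ\tilde\pi$ on $(1,a)$, use that $a$ is TT (via Remark~\ref{rem:sbcplx-bd-tt} and Lemma~\ref{lem:use-TT-form}(\ref{lem:use-TT-form:TT-rep})) to identify $\mkttpt(p(a))=a$, and use $Dz=\One$ to collapse the scalar component to~$1$; then pass to the projective statement by positive homogeneity of $\lambda$. The only differences are cosmetic: you invoke $z\in\Lsp^\bot$ (hence $z\iprod p(a)=0$) at the outset, whereas the paper carries the term $(p(a)-a)\iprod z$ and cancels it at the end; and you spell out the ``in particular'' step (including the verification $t>0$) more explicitly than the paper, which merely alludes to positive homogeneity and omits the computation.
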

\begin{proof}
  To see this we compute
  \begin{multline*}
    I(\tilde\pi(1,a))
    = I(-1+a\iprod z,p(a))
    \\
    = \bigl(1-a\iprod z - z\iprod p(a), p(a) \bigr)
    \\
    = \bigl((p(a)-a)\iprod z +1, p(a) \bigr)
  \end{multline*}
  Using that $a$ is TT (Remark~\ref{rem:sbcplx-bd-tt}), we conclude
  \begin{equation*}
    \mkttieq(I(\tilde\pi(1,a))) = \Bigl( (p(a)-a)\iprod z +1 - \lambda(p(a))\iprod \One,\;  a\Bigr).
  \end{equation*}
  Since $a$ is TT, by Lemma~\ref{lem:use-TT-form}(\ref{lem:use-TT-form:TT-rep}), $\lambda(p(a))$ is a solution to $p(a)-a = D^\Tp\lambda$.  Thus, using
  $\One=Dz$, it follows that
  \begin{equation*}
    (p(a)-a)\iprod z + 1 - \lambda(p(a))\iprod \One = (p(a)-a)\iprod z + 1 - D^\Tp \lambda(p(a))\iprod z = 1.
  \end{equation*}

  From the statement about $(\mkttieq\circ I)\circ \tilde\pi$, the statement about the projective mappings $\varphi\circ\pi$ follows by a slight generalization
  of the well-known fact that concatenation of projective mappings corresponds to multiplication of the respective matrices
  (Remark~\ref{rem:prelim:prj-map-concat}).  We omit the computation, and only note that it makes use of the fact that the two mappings
  $h_1\colon a\mapsto a - D^\Tp\lambda(a)$ and $h_2\colon a\mapsto a\iprod z + \lambda(a)\iprod\One$
  are positive homogeneous, i.e., $h_i(\eta a)=\eta h_i(a)$ for $\eta\ge0$, $i=1,2$, which follows directly from the definition of $\lambda$.
\end{proof}

\subsubsection{We show: $\varphi$ is one-to-one} \label{sssec:phi-inj}

Since we already know that $\varphi\circ \pi=\id$, surjectivity of $\pi$ is equivalent to injectivity of $\varphi$.
It is actually easier to prove the following slightly stronger statement.

\begin{lemma} \label{lemma:c-gamma-inj}%
  Let $a,b\in \Lsp$.  If there exists an $\eta\in\RR_+$ such that $(\gamma(a),c(a))=\eta(\gamma(b),c(b))$ then $\eta=1$ and $a=b$.
  In particular, $\varphi$ is injective.
\end{lemma}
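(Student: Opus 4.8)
The plan is to read off two structural features of the map $(\gamma,c)$ from \eqref{eq:c-gamma-def}: first, that $c$ is nothing but the map $\mkttpt$ of \eqref{eq:def-lambda}, so that the orthogonal projection $p$ is a left inverse of $c$ on $\Lsp$ by Lemma~\ref{lem:TTfans}; and second, that $c$ is positively homogeneous while $\gamma$ fails to be homogeneous exactly by the additive constant $-1$. The first feature recovers the point from its image, and the homogeneity defect of $\gamma$ is precisely what will force the scalar $\eta$ to equal $1$.

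First I would record the homogeneity bookkeeping. From \eqref{eq:def-lambda-u} the map $\lambda$ is positively homogeneous, $\lambda(\eta x)=\eta\lambda(x)$ for all $\eta\ge 0$; hence $c(\eta x)=\eta x-D^\Tp\lambda(\eta x)=\eta\,c(x)$, while $\gamma(\eta x)=-1+\eta\,x\iprod z-\One\iprod\lambda(\eta x)=\eta\bigl(\gamma(x)+1\bigr)-1=\eta\,\gamma(x)+(\eta-1)$. Moreover, since $c=\mkttpt$ and $D^\Tp\lambda(\cdot)\in\img D^\Tp=\Lsp^\orth=\ker p$, we have $p(c(x))=p(x)=x$ for every $x\in\Lsp$, i.e.\ $p$ is a left inverse of $\restr{c}{\Lsp}$; this is exactly the content of Lemma~\ref{lem:TTfans}.

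Now suppose $(\gamma(a),c(a))=\eta(\gamma(b),c(b))$ with $\eta\in\RR_+$ and $a,b\in\Lsp$. From $c(a)=\eta\,c(b)=c(\eta b)$ and the injectivity of $\restr{c}{\Lsp}$ just noted (applying $p$ and using $a,\eta b\in\Lsp$) we get $a=\eta b$. Feeding this into $\gamma(a)=\eta\,\gamma(b)$ and using the identity for $\gamma(\eta b)$ derived above gives $\eta\,\gamma(b)=\gamma(a)=\gamma(\eta b)=\eta\,\gamma(b)+(\eta-1)$, so $\eta=1$ and therefore $a=b$. For the final assertion, suppose $\varphi(a)=\varphi(b)$ for $a,b\in\sabs{\del(N,S^\polar)}$; by Lemma~\ref{lemma:gamma-never-vanish} we have $\gamma(a),\gamma(b)>0$, so $\eta:=\gamma(a)/\gamma(b)>0$. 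Then $\varphi(a)=\varphi(b)$ rewrites as $c(a)=\eta\,c(b)$, while $\gamma(a)=\eta\,\gamma(b)$ holds by the choice of $\eta$; thus $(\gamma(a),c(a))=\eta(\gamma(b),c(b))$, and the first part yields $a=b$.

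I do not expect a real obstacle here; the single point that demands care is not to treat $\gamma$ as a linear (rather than merely positively homogeneous up to a constant) function. It is exactly the inhomogeneous term $-1$ that makes the argument close, so the computation of $\gamma(\eta b)$ has to be carried out explicitly rather than by scaling.
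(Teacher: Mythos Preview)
Your proof is correct and follows essentially the same route as the paper's: both use the orthogonal decomposition $\RR^{\Esetn}=\Lsp\oplus\Lsp^\orth$ on the equation $c(a)=\eta\,c(b)$ to deduce $a=\eta b$, and then exploit the inhomogeneous constant $-1$ in $\gamma$ to force $\eta=1$. Your organization via positive homogeneity of $\lambda$ and an explicit appeal to Lemma~\ref{lem:TTfans} is slightly more conceptual than the paper's bare computation, but the argument is the same.
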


\begin{proof}
  Let such $a,b,\eta$ be given.  We have
  \begin{multline*}
    0 %
    = c(a) - \eta c(b) %
    = a - D^\Tp\lambda(a) - \eta\Bigl[ b - D^\Tp\lambda(b) \Bigr]
    \\
    = a - \eta b - D^\Tp\Bigl[ \lambda(a) - \eta \lambda(b) \Bigr].
  \end{multline*}
  Since $a,b\in\Lsp$ and $D^\Tp[\lambda(a) - \eta \lambda(b)] \in \Lsp^\bot$ we have
  \begin{equation}  \label{eq:lemma:c-gamma-inj:1} \tag{$\ast$}%
    a-\eta b = 0 =D^\Tp\lambda(a) - \eta D^\Tp\lambda(b)
  \end{equation}
  Applying $z\iprod\place$ to the second equation, we obtain
  \begin{equation*}
    0 = \One\iprod\lambda(a) - \eta\; \One\iprod\lambda(b)
  \end{equation*}
  Applying this to the $\gamma$s, we have
  \begin{multline*}
    0
    = \gamma(a)-\eta\gamma(b)
    \\
    =
    -1 + a\iprod z - \One\iprod \lambda(a)
    - \eta\Bigl[-1 + b\iprod z - \One\iprod\lambda(b) \Bigr]
    \\
    =
    -1 + \eta + (a - \eta b)\iprod z.
  \end{multline*}
  Since $z \in \Lsp^\bot$ we have $(a - \eta b)\iprod z=0$, whence $\eta=1$.  Now $a=b$ follows from
  \eqref{eq:lemma:c-gamma-inj:1}.
\end{proof}




\section{Proof of Theorem~\ref{thm:parsi-result}}\label{sec:parsi}

We will apply Theorem~\ref{thm:expo:inj} to prove Theorem~\ref{thm:parsi-result}.  The following lemma is the link between parsimonious property and geometry.

\begin{lemma}\label{lemma:GTSP:nonNR-not-writable-by-Bieqs-only}
  Let $Bx\ge \One$ be a system of inequalities defining NR-facets of $P$ such that the relaxation $\mathcal R_B$ has the parsimonious property.
  If $c^\Tp x\ge \gamma$ defines a non-NR facet of $P$, then $c,\gamma$ cannot be written in the form 
  \begin{equation}\label{eq:mu-form}
    \begin{aligned}
      c        &= b - \sum\nolimits_{v\in\Vsetn} \mu_v d_v \\
      \gamma   &= \beta - \sum\nolimits_{v\in\Vsetn} \mu_v
    \end{aligned}
  \end{equation}
  with $b^\Tp = \sum_j t_j b_j$ a non-negative linear combination of rows $b_j$ of $B$, $\beta = \sum_j t_j$, and $\mu_v \in \RR$ for all $v\in\Vsetn$.
\end{lemma}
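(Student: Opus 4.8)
The plan is to argue by contradiction. Let $F$ be the non-NR facet of $P$ defined by $c^\Tp x\ge\gamma$, and suppose $c,\gamma$ can be written as in \eqref{eq:mu-form}. I will show this forces the inequality $c^\Tp x\ge\gamma$ to be facet-defining for the relaxation polyhedron $R := \{x\ge 0 \mid Bx\ge\One,\ \delta_v\iprod x\ge 1\ \forall v\in\Vsetn\}$ itself, which is incompatible with $F$ being non-NR. Put $\mu := (\mu_v)_{v\in\Vsetn}$ and let $D$ be the matrix with rows $\delta_v^\Tp$, so that \eqref{eq:mu-form} reads $c = b - D^\Tp\mu$ and $\gamma = \beta - \sum_v\mu_v$, with $b^\Tp = \sum_j t_j b_j^\Tp$, $t_j\ge 0$, $\beta = \sum_j t_j$; also put $R' := \{x\ge 0 \mid Bx\ge\One,\ \delta_v\iprod x = 1\ \forall v\}$. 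Note that $P\subseteq R$ — every inequality in the description of $R$ is valid for $P$, since the rows of $B$ and the degree inequalities define facets of $P$ and $P\subseteq\RR_+^\Esetn$ — so $R$ is full-dimensional, and $S\subseteq R'\subseteq R$. Note also that $c$ is metric, because $c^\Tp x\ge\gamma$ defines a TT-facet of $P$, hence $c$ is a positive multiple of a tight-triangular (and therefore metric) vector.

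First I would compute, for the metric objective vector $c$, the two optima compared in the parsimonious inequality \eqref{eq:parsi-ieq}. On $R'$ one has $Dx = \One$, hence $c^\Tp x = b^\Tp x - \sum_v\mu_v$, and $b^\Tp x = \sum_j t_j(b_j\iprod x)\ge\sum_j t_j = \beta$ whenever $Bx\ge\One$; so $c^\Tp x\ge\gamma$ on $R'$, and it is tight on $F\cap S$, a nonempty face of $S$ — any facet of $P$ cut out by a metric inequality contains the incidence vector of a Hamilton cycle, by the shortcut argument in the proof of Lemma~\ref{lem:use-TT-form} — which is contained in $R'$. Hence $\min\{c^\Tp x \mid x\in R'\} = \gamma$. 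Also $\min\{c^\Tp x \mid x\in P\} = \gamma$, since $c^\Tp x\ge\gamma$ defines the nonempty facet $F$ of $P$; and as $P\subseteq R$ this yields $\min\{c^\Tp x\mid x\in R\}\le\gamma$. Invoking the parsimonious property of $\mathcal R_B$ — legitimate, since $c$ is metric — now gives $\min\{c^\Tp x\mid x\in R\} = \min\{c^\Tp x\mid x\in R'\} = \gamma$.

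Therefore $c^\Tp x\ge\gamma$ is valid for $R$ and attained there (at a vertex, $R$ being pointed), so it defines a nonempty proper face $R\cap\{c^\Tp x = \gamma\}$. This face contains the facet $F$ of $P$, so it has dimension at least $\dim P-1 = \dim R-1$; being proper, it is itself a facet of $R$. Since $R$ is full-dimensional and is described by the rows of $Bx\ge\One$, the degree inequalities $\delta_v\iprod x\ge 1$, and the nonnegativities $x_e\ge 0$, the facet-defining inequality $c^\Tp x\ge\gamma$ must be a positive scalar multiple of one of these. Now each case is excluded: $c$ cannot be a multiple of any $\chi^e$, because $c$ is metric; $c$ cannot be a multiple of any $\delta_v$, for then $F$ would be the degree facet at $v$ and hence contain $S$, contradicting $\dim(F\cap S)<\dim S-1$ (which holds because $F$ is non-NR); and $c^\Tp x\ge\gamma$ cannot be a multiple of a row $b_j\iprod x\ge 1$ of $B$, for then $F$ would be the NR-facet of $P$ that row defines, contradicting that $F$ is non-NR. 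This contradiction proves the lemma.

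I expect the crux — and the main obstacle — to be the second paragraph: the observation that the special form \eqref{eq:mu-form} makes $c^\Tp x$ agree on $\aff S$ with the harmless nonnegative combination $b^\Tp x$ of rows of $B$, so that the parsimonious hypothesis, applied to the metric objective $c$, upgrades validity of $c^\Tp x\ge\gamma$ over the ``equality'' relaxation $R'$ to validity over the full relaxation $R$. After that, what remains is the standard fact that every facet-defining inequality of a full-dimensional polyhedron is, up to positive scaling, one of the inequalities in any description of it, together with the Naddef--Rinaldi classification of the facets of $P$ into non-negativity, degree, NR-, and non-NR-types. The minor points to treat carefully are that $F\cap S\ne\emptyset$ (shortcut argument) and that $\min\{c^\Tp x\mid x\in R\}$ is finite and attained (pointedness of $R$).
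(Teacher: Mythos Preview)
Your proof is correct and follows essentially the same route as the paper: assume the form \eqref{eq:mu-form}, use it to show that $\min\{c^\Tp x\mid x\in R'\}=\gamma$, invoke the parsimonious property (with the metric objective $c$) to conclude $\min\{c^\Tp x\mid x\in R\}=\gamma$, and then obtain a contradiction from the facet classification. The only cosmetic difference is in the last step: the paper appeals to Farkas's Lemma to say that $c^\Tp x\ge\gamma$ is dominated by the defining inequalities of $R$, whereas you argue directly that $c^\Tp x\ge\gamma$ defines a facet of the full-dimensional polyhedron $R$ and hence must be a positive multiple of one of its describing inequalities --- these are equivalent here.
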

\begin{proof}
  Suppose that $c,\gamma$ can be written as in \eqref{eq:mu-form}.  Then minimizing the cost function $c$ over the relaxation consisting of
  \begin{itemize}
  \item all non-negativity inequalities
  \item all degree equations(!) $\delta_v\iprod x = 1$, $v\in \Vsetn$;
  \item all inequalities in the system $Bx\ge \One$.
  \end{itemize}
  yields $\gamma$ as the minimum.  If the degree equations are relaxed to inequalities, then, by the parsimonious property of $\mathcal R_B$, the minimum is
  still $\gamma$.  By Farkas's Lemma, this implies that the inequality $c\iprod x\ge\gamma$ is dominated by non-negativity inequalities, degree inequalities, and
  inequalities in $Bx\ge \One$.  This is impossible since $(c,\gamma)$ defines a non-NR facet of $P$ and all facets in $Bx\ge \One$ are NR.
\end{proof}

We are now ready to prove the Theorem~\ref{thm:parsi-result}.

\begin{proof}[Proof of Theorem~\ref{thm:parsi-result}]
  Let $a_\circ\iprod x \ge 1$ be an inequality defining a non-NR facet of $P$ which is not in the system $Bx\ge \One$.  By Lemma~\ref{lem:prop-blockingpoly},
  the paths in the ridge graph of $P$ not touching non-negativity facets are precisely the paths in the 1-skeleton of $P^\polar$.

  Thus, we have to find a path in the graph of $P^\polar$ which starts from $a_\circ$, ends in an NR-vertex, and does not use any degree vertices or vertices
  corresponding to rows of $B$.

  By Theorem~\ref{thm:expo:inj}, we know that there exists a projective homeomorphism $\pi\colon \sabs{\dl(S^\poOps,\bar\Cplx(P^\polar))} \to
  \sabs{\dl(N,S^\polar)}$ transporting the polyhedral complex $\dl(S^\poOps,\bar\Cplx(P^\polar))$ onto the rotation complex.  As in the proof of that theorem,
  we let $\varphi := \pi^{-1}$.

  Let $a:=\varphi^{-1}(a_\circ)$.  This point is contained in the relative interior of a unique face $F$ of $S^\polar$ containing no non-negativity vertex.  Let
  $\cplxD_F$ denote the set of all faces of the rotation complex $\cplxD$ which are contained in $F$, and let $B_F$ denote the set of vertices $b$ of $F$ for
  which $\varphi(b)^\Tp$ is a row of $B$.  We will prove the following:

  \begin{claim}\label{claim:1}
    Let $F$ be a face of $\dl(N, S^\polar)$, and let $a$ be a relative interior point of $F$ which is a vertex of $\mathcal D_F$ such that $\varphi(a)^\Tp$
    is not a row of $B$.  Then there is a path in the 1-skeleton of $\mathcal D_F$ starting at $a$, ending in a vertex of $F$, and not touching any of the
    vertices in $B_F$.
  \end{claim}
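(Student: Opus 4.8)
The plan is to argue by downward induction on $\dim F$, exploiting the fact that inside the face $F$ of $\dl(N,S^\polar)$ the restriction of the flat TT-fan $\TTcplx$ cuts $F$ into the cells of $\cplxD_F$, and that the vertices of $B_F$ are finitely many points in the relative interior of $F$ (they are relative interior points of $F$ because $\varphi$ maps $\relint F$ onto the relative interior of a single face of $\dl(S^\poOps,\bar\Cplx(P^\polar))$, and NR-vertices of $P^\polar$ correspond to vertices of $\dl(N,S^\polar)$ by Remark~\ref{rem:vertices-bij}, hence the rows of $B$ sitting in the \emph{interior} of the positive-dimensional face $F$ produce, via $\varphi^{-1}$, non-vertex points of the rotation complex). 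First I would dispose of the base case: if $\dim F = 1$, then $F$ is a segment; its subdivision $\cplxD_F$ is a path, the two endpoints of $F$ are vertices of $\cplxD_F$ not equal to any $b\in B_F$ (again because $B_F\subset\relint F$), and one simply walks from $a$ toward whichever endpoint avoids crossing all of $B_F$ — but since $B_F$ lies strictly between consecutive subdivision vertices only if it coincides with one of them, and it does not, we can actually reach at least one endpoint without stepping on a point of $B_F$. (If $\dim F = 0$ the claim is vacuous since then $a$ is itself a vertex of $F$.)

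For the inductive step, let $\dim F = d\ge 2$. The set $B_F$ is a finite subset of $\relint F$. I would pick a vertex $v_0$ of $F$ and a generic hyperplane argument: since $B_F\subset\relint F$ and $v_0\in\partial F$, there is a facet $F'$ of $F$ not containing $v_0$ whose relative interior is disjoint from... no — more carefully, the right move is to use a \emph{separating-hyperplane / linear-functional sweep}. Choose a linear functional $\ell$ on $\aff F$ that is non-constant on $F$ and is \emph{not} minimized on any point of $B_F$ among the vertices of $\cplxD_F$; since $B_F$ is finite and $\cplxD_F$ has finitely many vertices, a generic $\ell$ achieves its minimum over $\sabs{\cplxD_F}$ at a vertex of $\cplxD_F$ that is not in $B_F$, and that minimizing vertex lies on $\partial F$ (because if it lay in $\relint F$ it would be a point where $\ell$ could be decreased in all directions, contradicting minimality — unless it is a vertex of $F$ itself, but vertices of $F$ are vertices of $\cplxD_F$). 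Now run a monotone path: starting at $a$, repeatedly move to an adjacent vertex of $\cplxD_F$ with strictly smaller $\ell$-value. Such a neighbour exists as long as we are not at the $\ell$-minimizing vertex, because $\cplxD_F$ is a polyhedral complex whose underlying set is the polyhedron $F$, and on a polyhedral subdivision a vertex that is not the global minimizer of a linear functional has a neighbour of strictly smaller value (standard; this is the simplex-method monotone-path fact applied to the subdivision). Strict monotonicity guarantees we never revisit a vertex, so the path is finite and terminates at the $\ell$-minimizing vertex $w\in\partial F$; by choice of $\ell$ we have touched no point of $B_F$ along the way.

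If $w$ is a vertex of $F$, we are done. Otherwise $w$ lies in the relative interior of a proper face $F''\subsetneq F$; then $w$ is a vertex of $\cplxD_{F''}$, $\varphi(w)^\Tp$ is not a row of $B$ (it was not touched), and $\dim F'' < d$, so the inductive hypothesis applied to $F''$ yields a path in the $1$-skeleton of $\cplxD_{F''}\subset\cplxD_F$ from $w$ to a vertex of $F''$ — which is a vertex of $F$ — avoiding $B_{F''}\subseteq B_F$. Concatenating the two paths proves the claim, and then Claim~\ref{claim:1} applied with $F$ the face of $S^\polar$ carrying $a=\varphi^{-1}(a_\circ)$ gives, after transporting through $\pi$, the desired path in the $1$-skeleton of $P^\polar$ ending at an NR-vertex and avoiding degree vertices and rows of $B$.

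\textbf{Main obstacle.} The delicate point is the verification that $B_F\subset\relint F$ — i.e., that the rows of $B$ which sit on the face $F$ actually sit in its \emph{relative interior} rather than on its boundary — together with the fact that the endpoints (vertices of $F$) are genuine vertices of $\cplxD_F$ distinct from all of $B_F$. The first is where Remark~\ref{rem:vertices-bij} and the NR/non-NR dichotomy are used: a vertex of $F$ in the rotation complex pulls back under $\pi$ to an NR-vertex of $P^\polar$, so if $\varphi(v)^\Tp$ were a row of $B$ for a \emph{vertex} $v$ of $F$, that would be consistent — meaning $B_F$ can contain vertices of $F$, and my "generic $\ell$" argument must be set up so that the terminal vertex of $F$ it produces is one \emph{not} in $B_F$; ensuring such a vertex exists is exactly the content the claim is designed to feed into Theorem~\ref{thm:parsi-result}, so I expect the paper either strengthens the inductive statement (e.g. "ending in a vertex of $F$ not in $B_F$") or derives non-emptiness of the target set from Lemma~\ref{lemma:GTSP:nonNR-not-writable-by-Bieqs-only} — and reconciling that with the monotone-path construction is where the real work lies.
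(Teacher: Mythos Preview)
Your argument has a genuine gap, and you yourself put your finger on it in the final paragraph: the premise $B_F\subset\relint F$ is \emph{false by definition}. In the paper, $B_F$ is defined as the set of \emph{vertices of $F$} whose $\varphi$-image is a row of $B$; since rows of $B$ define NR-facets, Remark~\ref{rem:vertices-bij} says precisely that they correspond to vertices of $\dl(N,S^\polar)$, hence to vertices of $F$. So $B_F$ sits on the boundary of $F$, not in its interior, and the whole first half of your construction (generic $\ell$, minimum on $\partial F$ away from $B_F$) collapses: a generic linear functional on $F$ is minimized at a vertex of $F$, and nothing prevents that vertex from lying in $B_F$; likewise nothing prevents intermediate vertices of a monotone path in $\cplxD_F$ from being vertices of $F$ belonging to $B_F$.

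What you are missing is exactly the input from the parsimonious property. The paper proves an auxiliary Claim~2: any vertex $c$ of $\cplxD_F$ with $c\notin B_F$ satisfies $c\notin\conv B_F$. This is where Lemma~\ref{lemma:GTSP:nonNR-not-writable-by-Bieqs-only} enters, and it is not a technicality --- it is the only place in the whole argument where the hypothesis ``$\mathcal R_B$ has the parsimonious property'' is used. Once Claim~2 is available, one separates $a$ from $Q:=\conv B_F$ by a hyperplane $\{x\mid p\iprod x=\pi\}$ and runs a $p$-\emph{increasing} monotone path in the $1$-skeleton of $\cplxD_F$; every vertex on this path has $p$-value above $\pi$, hence lies outside $Q$, hence is not in $B_F$. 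When the path hits $\partial F$, one inducts on the smaller face. Your monotone-path skeleton and induction scheme are the right shape, but the direction of the sweep must be dictated by a hyperplane separating $a$ from $\conv B_F$, and the existence of that hyperplane is precisely what Claim~2 (via Lemma~\ref{lemma:GTSP:nonNR-not-writable-by-Bieqs-only}) supplies.
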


  By Theorem~\ref{thm:expo:char}, this claim implies the existence of the desired path in the graph of $P^\polar$ and thus concludes the proof of
  Theorem~\ref{thm:parsi-result}.
\end{proof}

\begin{proof}[Proof of Claim~\ref{claim:1}]
  The proof of the claim is by induction on $\dim F$.  For $\dim F=0$, we are done, because then $a$ is a vertex of $F$.  Let $\dim F\ge 1$, and assume the
  claim holds for relative interior points $a'$ of faces $F'$ with dimension $\dim F' < \dim F$.

  If $B=\emptyset$, we are done.  Otherwise let $Q := \conv B_F$.  This is a non-empty polytope which is contained in $F^\poOps$.  Using
  Lemma~\ref{lemma:GTSP:nonNR-not-writable-by-Bieqs-only} we will show the following:

  \begin{claim}\label{claim:2}
    Let $c$ be a vertex of $\mathcal D_F$ which is not a member of $B_F$.  Then $c$ cannot be contained in $Q$.
  \end{claim}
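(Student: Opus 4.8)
The plan is to argue by contradiction: suppose some vertex $c$ of $\mathcal D_F$ with $c\notin B_F$ lies in $Q=\conv B_F$. The key is to translate this geometric containment into an algebraic statement about the inequality $\varphi(c)\iprod x\ge 1$ that $c$ represents, and then to invoke Lemma~\ref{lemma:GTSP:nonNR-not-writable-by-Bieqs-only} to derive a contradiction. First I would recall that $c$, being a vertex of $\mathcal D_F$ lying in the relative interior of $F$ (or in $F$), corresponds via $\varphi$ to a facet-defining inequality of $P$; since $c$ is a vertex of the rotation complex $\mathcal D_F$ but $\varphi(c)^\Tp$ is \emph{not} a row of $B$, Remark~\ref{rem:vertices-bij} tells us that $\varphi(c)$ is an NR-vertex or $c$ itself is a vertex of $F$; in the relevant case (the one producing a contradiction) the inequality $\varphi(c)\iprod x\ge 1$ defines a non-NR facet of $P$ — this is exactly the situation $a_\circ$ puts us in, propagated down the induction. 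Actually, the cleaner route: the point $c$ playing the role of $a$ in the induction is a relative interior point of $F$ corresponding to a non-NR facet (that is how the claim will be applied, with $c$ inheriting the non-NR property from $a_\circ=\varphi(a)$ through the common face $F$), so $\varphi(c)\iprod x\ge 1$ defines a non-NR facet of $P$.

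Next I would unwind what $c\in Q=\conv B_F$ means. Writing $c=\sum_k s_k c_k$ with $c_k\in B_F$, $s_k\ge 0$, $\sum_k s_k=1$, each $\varphi(c_k)^\Tp$ is a row of $B$. The subtlety is that $\varphi$ is (the inverse of) a projective map, not linear, so convex combinations in $S^\polar$-coordinates do not directly become convex combinations of the $\varphi(c_k)$ in $P^\polar$-coordinates. However, $\pi=\varphi^{-1}$ has the explicit form $\pi(a)=\frac{1}{a\iprod z-1}p(a)$, and the construction in Section~\ref{ssec:piinv} shows that passing between the two coordinate systems amounts to (i) lifting to the cone over $S^\polar$, (ii) applying the linear map $\widetilde\pi$ (respectively $\mkttieq\circ I$), and (iii) renormalizing. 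Projective maps send the affine span of $B_F$ to the affine span of $\{\varphi(c_k)\}$ and preserve the face lattice; so $c\in\conv B_F$ together with $c\notin B_F$ forces $\varphi(c)$ to lie in the affine hull of $\varphi(B_F)$ but to be a \emph{proper} convex combination once one accounts for the projective rescaling — i.e., there are reals $t_k\ge 0$ (the rescaled barycentric coordinates, positive because $\gamma$ never vanishes by Lemma~\ref{lemma:gamma-never-vanish}) and a scalar so that $\varphi(c)$ and $1$ are expressed as a non-negative combination of the $\varphi(c_k)$ and the right-hand sides $1$. Concretely, this yields an identity of the shape
\begin{equation*}
  (c',\,1) = \sum\nolimits_k t_k\,(b_k',\,1) \;-\; \sum\nolimits_{v\in\Vsetn}\mu_v\,(d_v,\,1),
\end{equation*}
where $c'=\varphi(c)$, $b_k'=\varphi(c_k)$ are rows of $B$, and the $\mu_v$-term arises because moving from the projective normalization back to the ``$a\iprod x\ge 1$'' normalization costs a linear combination of the degree equations $d_v$ — exactly the freedom built into the rotation \eqref{eq:rotated-ieq}. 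That is precisely the forbidden form \eqref{eq:mu-form}.

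Finally, since $\varphi(c)\iprod x\ge 1$ defines a non-NR facet of $P$ while all rows of $B$ define NR-facets and the relaxation $\mathcal R_B$ has the parsimonious property by hypothesis, Lemma~\ref{lemma:GTSP:nonNR-not-writable-by-Bieqs-only} says $\varphi(c),1$ \emph{cannot} be written in the form \eqref{eq:mu-form}. This contradicts the displayed identity, so no such $c$ exists, proving Claim~\ref{claim:2}. The main obstacle I anticipate is step two: carefully bookkeeping the projective rescaling so that the convex combination $c=\sum s_k c_k$ really does produce a \emph{non-negative} combination $\sum t_k(b_k',1)$ after passing through $\varphi$, and checking that the residual term is genuinely of the form $\sum_v\mu_v(d_v,1)$ with the right signs — here one must use $\gamma(a)>0$ on all of $\abs{\dl(N,S^\polar)}$ (Lemma~\ref{lemma:gamma-never-vanish}) to guarantee the $t_k$ are non-negative, and the positive-homogeneity of the maps $h_1,h_2$ from Section~\ref{sssec:phi-leftinverse-of-pi} to handle the lifting cleanly. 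Everything else is either a direct appeal to Lemma~\ref{lemma:GTSP:nonNR-not-writable-by-Bieqs-only} or standard facts about projective maps preserving face lattices and affine hulls.
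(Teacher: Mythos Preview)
Your overall strategy is the same as the paper's: assume $c\in\conv B_F$, translate this into an expression of $(\varphi(c),1)$ as a non-negative combination of rows of $B$ plus a linear combination of degree vectors, and invoke Lemma~\ref{lemma:GTSP:nonNR-not-writable-by-Bieqs-only}.  The paper carries out the rescaling step by an explicit computation with $\lambda$, writing $c-\sum_v\lambda_v(c)d_v=\sum_j t_j\bigl(b_j-\sum_v\lambda_v(b_j)d_v\bigr)-\sum_v\mu_v d_v$ and then recognizing $\gamma(\cdot)\varphi(\cdot)$ on both sides; your abstract description of this via ``projective rescaling'' and $\gamma>0$ is an accurate outline of that same computation.

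There is, however, a genuine gap in your argument for why $\varphi(c)\iprod x\ge 1$ defines a \emph{non-NR} facet.  Claim~\ref{claim:2} is stated for an \emph{arbitrary} vertex $c$ of $\mathcal D_F$ with $c\notin B_F$, not only for the particular point $a$ coming from the induction in Claim~\ref{claim:1}; your ``cleaner route'' via inheritance from $a_\circ$ does not apply to such a general $c$.  The correct (and simpler) argument, which the paper gives, is purely combinatorial: since $B_F$ consists of vertices of the polytope $F$ and $c\in\conv B_F\setminus B_F$, the point $c$ cannot be a vertex of $F$; then Remark~\ref{rem:vertices-bij} yields that $\varphi(c)$ is a non-NR vertex of $P^\polar$.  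With this fix, your proposal matches the paper's proof.
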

    
  The proof of Claim~\ref{claim:2} is technical, and we postpone it till the proof of Claim~\ref{claim:1} is finished.  If Claim~\ref{claim:2} is true, however,
  then we we know that $a$ is not in $Q$.
  Let $p,\pi$ define a hyperplane separating $a$ from $Q$, i.e., $q\iprod p < \pi$ for all $q\in Q$, and $a\iprod p > \pi$.  See Fig.~\ref{fig:stepQ} for an
  illustration.  It assumes the face $F$ is an 8-gon.

\begin{figure}[ht]
  \centering
  \scalebox{.5}{\input{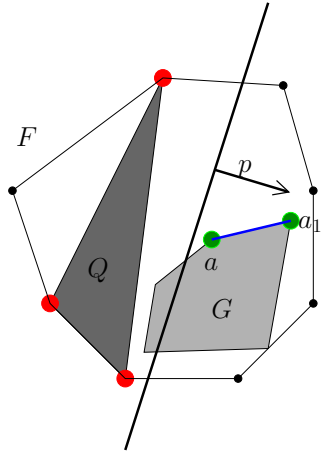}}
  \caption{One step of the path}
  \label{fig:stepQ}
\end{figure}

  By a standard general position argument, we can assume that $p$ is not parallel to any face with co-dimension at least one in $\mathcal D_F$.
  Hence, there exists an $\eps>0$ such that the line segment $a + ]0,\eps[\cdot p$ is contained in the relative interior of a $\dim F$-dimensional face $G$ of
  $\mathcal D_F$, of which $a$ is a vertex.
  By elementary polytope theory (the edges of a polyhedron incident to a fixed vertex span a cone of the same dimension as the polyhedron), $G$ must have a
  vertex $a_1$ adjacent to $a$ with $a\iprod p < a_1\iprod p$.  Clearly $a_1\not\in B_F$.
  
  If $a_1$ is in the boundary of $F$, then the induction hypotheses implies the existence of a path from $a_1$ to a vertex of $F$ not using any vertex in $B_F$.
  If that is not the case, we apply the argument in the previous paragraph inductively to obtain a path $a,a_1,\dots,a_k$ in the 1-skeleton of $\mathcal D_F$
  with $a\iprod p < a_1\iprod p < \dots < a_{j}\iprod p < a_{j+1}\iprod p < \dots < a_k\iprod p$.  Since the 1-skeleton of $\mathcal D_F$ is finite and the path
  we are constructing is $p$-increasing, a vertex on the boundary of $F$ will eventually be reached.

  This concludes the proof of Claim~\ref{claim:1}.
\end{proof} 

\begin{proof}[Proof of Claim~\ref{claim:2}]
  Let $c$ be a vertex of $\mathcal D_F$ with $c\not\in B_F$.  Assume that $c\in\conv B_F$, i.e., $c$ can be written as a convex combination $c=\sum_{j=1}^k t_j
  b_j$ with $\varphi(b_j)^\Tp$ a row of $B$ for all $j=1,\dots,k$.  Clearly, $c$ cannot be a vertex of $F$, so $\varphi^{-1}(c)\iprod x \ge 1$ defines a non-NR
  facet of $P$ by Remark~\ref{rem:vertices-bij}.  We compute
  \begin{multline*}
    c - \sum_{v\in\Vsetn} \lambda_v(c) d_v
    =\\
    \sum_j t_j \lt( b_j - \sum_{v\in\Vsetn} \lambda_v(b_j) d_v \rt)
    - \sum_{v\in\Vsetn} \lt( \lambda_v(c) - \sum_j t_j \lambda_v(b_j) \rt) d_v.
  \end{multline*}
  Letting $\sigma := 1-\sum_v \lambda_v(c)$, $\tau_j := 1-\sum_v \lambda_v(b_j)$, and $\mu_v := \lambda_v(c) - \sum_j t_j \lambda_v(b_j)$, we see that
  \begin{align*}
    \sigma \varphi(c) &= \sum_j t_j \tau_j \varphi(b_j) - \sum_v \mu_v d_v\\
    \sigma            &= \sum_j t_j \tau_j - \sum\nolimits_{v\in\Vsetn} \mu_v
  \end{align*}
  This means that the inequality $\sigma \varphi(c)\iprod x \ge \sigma$ can be written as a non-negative linear combination of the inequalities
  $\varphi(b_j)\iprod x \ge 1$, $j=1,\dots,k$ plus a linear combination of degree vertices as in \eqref{eq:mu-form}.  Since the former inequality defines a
  facet of $P$ by Theorems \ref{thm:expo:char} and~\ref{thm:expo:inj}, and the inequalities forming the non-negative linear combination are taken from the
  system $Bx \ge \One$, Lemma~\ref{lemma:GTSP:nonNR-not-writable-by-Bieqs-only} yields a contradiction.
\end{proof}


\section{Outlook}


We conjecture that the necessary condition for parsimonious property in Theorem~\ref{thm:parsi-result} is also sufficient.

\begin{conjecture*}
  If every connected component of $\mathcal G_B$ contains vertices corresponding to NR-facets of $\Gtspn$, then the relaxation $\mathcal R_B$ of has the
  parsimonious property.
\end{conjecture*}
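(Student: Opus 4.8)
The natural attempt is to prove the contrapositive: assuming $\mathcal R_B$ fails the parsimonious property, one exhibits a connected component of $\mathcal G_B$ containing no NR-vertex. As in Section~\ref{sec:parsi}, I would work entirely on the $S^\polar$-side. By Theorem~\ref{thm:expo:inj}, and the identification of the non-negativity-free part of the ridge graph of $P$ with the $1$-skeleton of $P^\polar$, the graph $\mathcal G_B$ is the $1$-skeleton of the rotation complex $\TiltCplx$ with the vertices $\varphi^{-1}(b_j)$ removed, where the $b_j$ are the rows of $B$; under this identification an NR-vertex of $\mathcal G_B$ is exactly a surviving vertex of $\TiltCplx$ that is also a vertex of $\dl(N,S^\polar)$ (cf.\ Remark~\ref{rem:vertices-bij}).

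\emph{Producing a witness.} Failure of parsimony gives a metric $c$ with $\min_{P'}c<\min_{S'}c$, where $P':=\{x\ge 0\mid Bx\ge\One,\ \delta_v\iprod x\ge 1\ \forall v\}$ and $S':=P'\cap\{\delta_v\iprod x=1\ \forall v\}$, a bounded face of $P'$. Since $S\subseteq S'$ and $\min_S c=\min_P c$ by the parsimonious property of $P$ itself (Naddef \& Rinaldi~\cite{NadRina91}), a vertex $x^\circ$ of $P'$ attaining $\min_{P'}c$ has $c\iprod x^\circ<\min_P c$, hence $x^\circ\notin P$; LP duality at $x^\circ$ yields a non-negative representation $c=\sum_j t_j b_j+\sum_v\mu_v\delta_v+\eta$, $\eta\ge 0$, with $\min_{P'}c=\sum_j t_j+\sum_v\mu_v$ and the usual complementary-slackness conditions at $x^\circ$.

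\emph{From the witness to an isolated component.} The plan now is to extract from $x^\circ$ a facet-defining inequality of $P$ which is non-NR and can be written in the form~\eqref{eq:mu-form}: the facets of $P$ violated by $x^\circ$ are TT-facets outside $B$, and the slack pattern of $x^\circ$ (which $b_j$'s and which degree constraints are tight) should single out such a facet whose inequality is, over $\aff S$, a non-negative combination of the tight $b_j$'s plus the degree equations — precisely what~\eqref{eq:mu-form} expresses. In polar terms this should be the reverse of the computation in the proof of Claim~\ref{claim:2}: a non-NR facet of $P$ is in $\mu$-form if and only if the corresponding non-NR vertex $c$ of $\TiltCplx$, carried by the face $F$ of $\dl(N,S^\polar)$ with $c\in\relint F$, lies in $\conv B_F$. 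Given such a trapped $c$, one runs Claim~\ref{claim:1} in reverse: no hyperplane separates $c$ from $\conv B_F$ inside $F$, so the $p$-increasing escape used there is blocked at the top dimension, and one argues by downward induction on the carrier face that the component of $c$ in the $1$-skeleton of $\TiltCplx$ with the $\varphi^{-1}(b_j)$ deleted can never reach a vertex of $\dl(N,S^\polar)$, i.e.\ an NR-vertex.

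I expect the extraction step to be the real obstacle, and the likely reason the statement is still only a conjecture: the parsimony-failure certificate is a fractional, highly non-unique LP dual solution, whereas Lemma~\ref{lemma:GTSP:nonNR-not-writable-by-Bieqs-only} and the rotation-complex picture concern genuine facets, so one needs a principled way to ``round'' the certificate to a facet of $P$ \emph{and} to certify that the resulting facet is non-NR — nothing in the present machinery does this. A secondary difficulty is the globalisation step: enclosure by $B_F$ inside a single face $F$ does not a priori prevent escape through a larger face $F'\supsetneq F$, where $\conv B_{F'}$ is larger, so the inductive bookkeeping must also rule out such detours, which is exactly the converse of the combinatorics packaged in Claims~\ref{claim:1} and~\ref{claim:2} and is where I would expect a full proof to require new ideas.
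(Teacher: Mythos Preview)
The statement you are addressing is not a theorem of the paper; it is explicitly labelled a \emph{Conjecture} in the Outlook section, and the paper offers no proof, only the remark that it holds for the known examples from~\cite{OsRlTheis07}.  There is therefore nothing to compare your attempt against.  You are clearly aware of this, since your text repeatedly flags the obstacles and ends by saying that new ideas would be required; what you have written is an outline of a possible strategy together with a diagnosis of where it breaks, not a proof.

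Two remarks on the strategy itself.  First, your asserted equivalence ``a non-NR facet is in $\mu$-form if and only if the corresponding vertex $c$ of $\TiltCplx$ lies in $\conv B_F$'' overstates what the paper establishes.  The computation in the proof of Claim~\ref{claim:2} only shows the forward implication ($c\in\conv B_F$ $\Rightarrow$ $\mu$-form); the converse would require inverting the passage through $\lambda$ and, more seriously, there is no reason the rows $b_j$ appearing in a $\mu$-representation of $\varphi(c)$ should correspond to vertices of the particular carrier face~$F$.  Second, your ``reverse of Claim~\ref{claim:1}'' is not a logical converse: Claim~\ref{claim:1} builds a $p$-increasing path once separation is available, but the failure of separation inside~$F$ does not by itself prevent the $1$-skeleton of~$\cplxD_F$ from reaching~$\partial F$ along some other route, nor does it control what happens after the path leaves~$F$.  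These are precisely the two difficulties you name (extraction and globalisation), so your self-assessment is accurate; the point is only that the ``if and only if'' you state in passing is already part of the missing content, not something one can take for granted.
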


The conjecture holds for the known relaxations of $S$ consisting of NR-inequalities described in \cite{OsRlTheis07} which fail the parsimonious property.

\section*{Acknowledgments}

The author would like to thank the \textit{Deutsche Forschungsgemeinschaft}, DFG, for funding this research, and the \textit{Communaut\'e fran\c caise de
  Belgique -- Actions de Recherche Concert\'ees} for supporting the author during the time the paper was written down.

Moreover, thanks are extended to Jean-Paul Doignon and Samuel Fiorini, U.L.B., for helpful discussions on the topic of this paper, and to Marcus Oswald for
inspiring discussions on the topic of Section~\ref{sec:parsi}.


\begin{thebibliography}{10}

\bibitem{ABCC2001}
D.~Applegate, R.~Bixby, V.~Chv{\'a}tal, and W.~Cook.
\newblock {T}{S}{P} cuts which do not conform to the template paradigm.
\newblock In M.~J{\"u}nger and D.~Naddef, editors, {\em Computational
  Combinatorial Optimization}, pages 261--303. Springer-Verlag Berlin
  Heidelberg, 2001.

\bibitem{ABCC03}
D.~Applegate, R.~Bixby, V.~Chv{\'a}tal, and W.~Cook.
\newblock Implementing the {D}antzig-{F}ulkerson-{J}ohnson algorithm for large
  {T}raveling {S}alesman {P}roblems.
\newblock {\em Math.\ Program.\ Ser.\ B}, 97(1--2):91--153, 2003.

\bibitem{ABCCbook2006}
D.~Applegate, R.~Bixby, V.~Chv{\'a}tal, and W.~Cook.
\newblock {\em The Traveling Salesman Problem -- A Computational Study}.
\newblock Princeton Series in Applied Mathematics. Princeton, 2006.

\bibitem{BilleraSaranarajan96}
{L. J.} Billera and A.~Sarangarajan.
\newblock All 0-1 polytopes are traveling salesman polytopes.
\newblock {\em Combinatorica}, 16:175--188, 1996.

\bibitem{BoydCunningh91}
S.~{C.} Boyd and W.~{H.} Cunningham.
\newblock Small {T}ravelling {S}alesman {P}olytopes.
\newblock {\em Math. Oper. Res}, 16(2):259--271, 1991.

\bibitem{Carr2004}
R.~Carr.
\newblock Separation algorithms for classes of {S}{T}{S}{P} inequalities
  arising from a new {S}{T}{S}{P} relaxation.
\newblock {\em Math. Oper. Res}, 29(1):80--91, 2004.

\bibitem{ChristofJungerReinelt91}
T.~Christof, M.~J{\"{u}}nger, and G.~Reinelt.
\newblock A complete description of the {T}raveling {S}alesman {P}olytope on 8
  nodes.
\newblock {\em Oper.\ Res.\ Lett.}, 10:497--500, 1991.

\bibitem{ChristoReinelt96}
T.~Christof and G.~Reinelt.
\newblock Combinatorial optimization and small polytopes.
\newblock {\em Top}, 4(1):1--53, 1996.

\bibitem{ChristofReineltStsp10}
T.~Christof and G.~Reinelt.
\newblock Decomposition and parallelization techniques for enumerating the
  facets of combinatorial polytopes.
\newblock {\em Int. J. Comput. Geom. Appl.}, 11:423--437, 2001.

\bibitem{CornFonluNadd}
G.~Cornu\'ejols, J.~Fonlupt, and D.~Naddef.
\newblock The {T}raveling {S}alesman {P}roblem on a {G}raph and some related
  {I}nteger {P}olyhedra.
\newblock {\em Math. Program.}, 33:1--27, 1985.

\bibitem{DaFulkJohn54}
G.~Dantzig, R.~Fulkerson, and S.~Johnson.
\newblock Solution of a large-scale {T}raveling {S}alesman {P}roblem.
\newblock {\em Oper. Res.}, 2:393--410, 1954.

\bibitem{FonlNadd92}
J.~Fonlupt and D.~Naddef.
\newblock The {T}raveling {S}alesman {P}roblem in graphs with some excluded
  minors.
\newblock {\em Math. Program.}, 53(2):147--172, 1992.

\bibitem{Goemans95}
M.~{X.} Goemans.
\newblock Worst-case comparison of valid inequalities for the {T}{S}{P}.
\newblock {\em Math. Program.}, 69(2):335--349, 1995.

\bibitem{GoemBertim93}
M.~{X.} Goemans and D.~{J.} Bertsimas.
\newblock Survivable networks, linear programming relaxations and the
  parsimonious property.
\newblock {\em Math.\ Program.}, 60(2):145--166, 1993.

\bibitem{GroePadb79a}
{M.} Gr{\"o}tschel and {M.}~{W.} Padberg.
\newblock On the {S}ymmetric {T}ravelling {S}alesman {P}roblem {I}:
  inequalities.
\newblock {\em Math. Program.}, 16:265--280, 1979.

\bibitem{GroePadb79b}
M.~Gr{\"o}tschel and M.~W. Padberg.
\newblock On the {S}ymmetric {T}ravelling {S}alesman {P}roblem {I}{I}: lifting
  theorems and facets.
\newblock {\em Math. Program.}, 16:281--302, 1979.

\bibitem{GroetschPadberg85}
M.~Gr{\"o}tschel and M.W. Padberg.
\newblock Polyhedral theory.
\newblock In E.~{L.} Lawler, J.~{K.} Lenstra, A.~{H. G.}~Rinnooy Kan, and
  D.~Shmoys, editors, {\em The Travelling Salesman Problem}, pages 251--305.
  Wiley, New York, 1985.

\bibitem{Grunbaum03}
B.~Gr{\"u}nbaum.
\newblock {\em Convex Polytopes}.
\newblock Springer-Verlag New York, 2nd edition, 2003.

\bibitem{Heller55a}
I.~Heller.
\newblock Geometric characterization of cyclic permutations [abstract].
\newblock {\em Bull.\ Am.\ Math.\ Soc.}, 61:227, 1955.

\bibitem{Heller55b}
I.~Heller.
\newblock Neighbor relations on the convex hull of cyclic permutations
  [abstract].
\newblock {\em Bull.\ Am.\ Math.\ Soc.}, 61:440, 1955.

\bibitem{Heller56}
I.~Heller.
\newblock Neighbor relations on the convex hull of cyclic permutations.
\newblock {\em Pacific J.\ Math.}, 6:467--477, 1956.

\bibitem{JuReiRin95}
M.~J{\"u}nger, G.~Reinelt, and G.~Rinaldi.
\newblock The {T}raveling {S}alesman {P}roblem.
\newblock In M.~{O.} Ball, T.~{L.} Magnanti, C.~{L.} Monma, and G.~{L.}
  Nemhauser, editors, {\em Handbooks in Operations Research and Management
  Science}, volume~7, chapter~4, pages 225--330. Elsevier Science B. V., 1995.

\bibitem{Kuhn55}
H.~{W.} Kuhn.
\newblock On certain convex polyhedra [abstract].
\newblock {\em Bull.\ Am.\ Math.\ Soc.}, 61:557--558, 1955.

\bibitem{Let05PCv}
A.~{N.} Letchford.
\newblock Personal communication, 2005.

\bibitem{Gutin_Naddef}
D.~Naddef.
\newblock Polyhedral {T}heory and {B}ranch-and-{C}ut {A}lgorithms for the
  {S}ymmteric {T}{S}{P}.
\newblock In Gregory Gutin and Abraham~P. Punnen, editors, {\em The {T}raveling
  {S}alesman {P}roblem and {I}ts {V}ariations}, pages 29--116. Kluwer Academic
  Publishers, 2002.

\bibitem{NaddefPochet01}
D.~Naddef and Y.~Pochet.
\newblock The {S}ymmetric {T}raveling {S}alesman {P}olytope revisited.
\newblock {\em Math.\ Oper.\ Res.}, 26:700--722, 2001.

\bibitem{NadRina91}
D.~Naddef and G.~Rinaldi.
\newblock The {S}ymmetric {T}raveling {S}alesman {P}olytope and its graphical
  relaxation: {C}omposition of valid inequalities.
\newblock {\em Math. Program.}, 51:359--400, 1991.

\bibitem{NadRina93}
D.~Naddef and G.~Rinaldi.
\newblock The graphical relaxation: {A} new framework for the {S}ymmetric
  {T}raveling {S}alesman {P}olytope.
\newblock {\em Math. Program.}, 58:53--88, 1993.

\bibitem{NadRina07}
D.~Naddef and G.~Rinaldi.
\newblock The {S}ymmetric {T}raveling {S}alesman {P}olytope: New facets from
  the graphical relaxation.
\newblock {\em Math.\ Oper.\ Res.}, 32:233--256, 2007.

\bibitem{Norman55}
{R. Z.} Norman.
\newblock On the convex polyhedra of the {S}ymmetric {T}raveling {S}alesman
  {P}roblem [abstract].
\newblock {\em Bull.\ Am.\ Math.\ Soc.}, 61:559, 1955.

\bibitem{OswReiThe05}
M.~Oswald, G.~Reinelt, and D.~{O.} Theis.
\newblock Not every {G}{T}{S}{P} facet induces an {S}{T}{S}{P} facet.
\newblock In M.~J{\"u}nger and V.~Kaibel, editors, {\em Integer Programming and
  Combinatorial Optimization 11}, volume 3509 of {\em LNCS}, pages 468--482.
  Springer-Verlag Berlin Heidelberg, 2005.

\bibitem{OsRlTheis07}
M.~Oswald, G.~Reinelt, and {D. O.} Theis.
\newblock On the {G}raphical {R}elaxation of the {S}ymmetric {T}raveling
  {S}alesman {P}olytope.
\newblock {\em Math.\ Program.\ Ser.\ B}, 110:175--193, 2007.

\bibitem{Rispoli98}
F.~{J.} Rispoli.
\newblock The monotonic diameter of traveling salesman polytopes.
\newblock {\em Oper.\ Res.\ Lett.}, 22:69--73, 1998.

\bibitem{RispoCosar98}
{F. J.} Rispoli and S.~Cosares.
\newblock A bound of 4 for the diameter of the {S}ymmetric {T}raveling
  {S}alesman {P}olytope.
\newblock {\em SIAM J. Discrete Math.}, 11:343--380, 1998.

\bibitem{SchrijverBk03}
A.~Schrijver.
\newblock {\em Combinatorial Optimization: Polyhedra and Efficiency}.
\newblock Springer-Verlag Berlin Heidelberg, 2003.

\bibitem{Sierksma98}
G~Sierksma.
\newblock Interchange graphs and the {H}amiltonian cycle polytope.
\newblock {\em Discrete Appl.\ Math.}, 81:217--224, 1998.

\bibitem{SierkTeunTiejss95}
G.~Sierksma, {R. H.} Teunter, and {G. A.} Tijssen.
\newblock Faces of diameter two on the {H}amiltonian cycle polytope.
\newblock {\em Oper.\ Res.\ Lett.}, 18:59--64, 1995.

\bibitem{SierkTiejss92}
G.~Sierksma and {G. A.} Tijssen.
\newblock Faces with large diameter on the {S}ymmetric {T}raveling {S}alesman
  {P}olytope.
\newblock {\em Oper.\ Res.\ Lett.}, 12:73--77, 1992.

\bibitem{TheisStspGtspMinkowski}
D.~{O.} Theis.
\newblock A note on the relationship between the {G}raphical {T}raveling
  {S}alesman {P}olyhedron, the {S}ymmetric {T}raveling {S}alesman {P}olytope,
  and the {M}etric {C}one.
\newblock To appear in Discrete Appl.\ Math.\ (arXiv:0801.1652).

\bibitem{subdivpolarface}
D.~{O.} Theis.
\newblock Subdividing the polar of a face.
\newblock Submitted to Discrete Math.\ (arXiv:0708.1174).

\bibitem{ZieglerLectPtp}
G.~{M.} Ziegler.
\newblock {\em Lectures on {P}olytopes}.
\newblock Springer-Verlag New York, 1998.

\end{thebibliography}
\end{document}